\documentclass[12pt]{amsart}
\usepackage{latexsym,amssymb,amsfonts,amsmath, amsthm, amscd, euscript, amsfonts, MnSymbol}

\usepackage[latin1]{inputenc}
\usepackage[english]{babel}

%\usepackage[active]{srcltx}
%%%%%%%%%%%%%%%%%%%%%%%%%%%%%%%%%%%%%%%%%%%%%%%%%%%%%%%%%%%%%%%%%%%%%%%%%%%%%%%%%%%%%%%%%%%%%%%%%%%%
\usepackage{enumerate}
\usepackage{verbatim}
\usepackage{multicol}
\usepackage{graphicx}
\usepackage{epsfig}
\usepackage{color}
\usepackage{fullpage}
\usepackage{cases}

%\usepackage{hyperref}
%\usepackage{refcheck}

% Over-full v-boxes on even pages are due to the \v{c} in author's name
%\vfuzz2pt % Don't report over-full v-boxes if over-edge is small

\newtheorem{theorem}{Theorem}
\newtheorem{claim}{Claim}
\newtheorem{subclaim}{Subclaim}

\newtheorem{remark}{Remark}
\newtheorem{lemma}[theorem]{Lemma}

\newtheorem{proposition}[theorem]{Proposition}
\newtheorem{corollary}[theorem]{Corollary}
\newtheorem{example}{Example}

\newcommand{\age}{Age}
\newcommand{\inc}{Inc}
\newcommand{\comp}{{Comp}}

\newcommand{\iinc}{inc}

\newcommand{\NN}{{\mathbb N}}

\begin{document}

\title{Metric properties of incomparability graphs with an emphasis on paths}
\author [M.Pouzet]{Maurice Pouzet}
\address{Univ. Lyon, Universit\'e Claude-Bernard Lyon1, CNRS UMR 5208, Institut Camille Jordan, 43, Bd. du 11 Novembre 1918, 69622
Villeurbanne, France et Department of Mathematics and Statistics, University of Calgary, Calgary, Alberta, Canada}
\email{pouzet@univ-lyon1.fr }

\author[I.Zaguia]{Imed Zaguia*}\thanks{*Corresponding author. Supported by Canadian Defence Academy Research Program, NSERC and LABEX MILYON (ANR-10-LABX-0070) of Universit\'e de Lyon within the program ''Investissements d'Avenir (ANR-11-IDEX-0007'' operated by the French National Research Agency (ANR)}
\address{Department of Mathematics \& Computer Science, Royal Military College of Canada,
P.O.Box 17000, Station Forces, Kingston, Ontario, Canada K7K 7B4}
\email{zaguia@rmc.ca}

\date{\today}

\keywords{(partially) ordered set; incomparability graph; graphical distance; isometric subgraph}
\subjclass[2010]{06A6, 06F15}

\begin{abstract}We describe some metric properties of incomparability graphs. We consider the problem of the existence of  infinite paths, either  induced or isometric, in the incomparability graph of a poset. Among other things, we show that if the  incomparability graph of a poset is connected and has infinite diameter, then it contains an infinite induced path. Furthermore, if the diameter of the set of vertices of degree at least $3$ is infinite, then  the graph  contains  as an induced subgraph  either a comb or a kite. %This result allows to distinguish between ages of  permutation graphs which are well quasi ordered and  those which are not.
\end{abstract}

\maketitle

\input{epsf}

\section{Introduction and presentation of the results}
In this paper, we highlight the special properties of incomparability graphs by considering the behavior of paths. We consider the problem of the existence of  infinite  paths, either  induced or isometric, in the incomparability graph of a poset. We apply one of our results in the theory of hereditary classes of certain permutation classes that are well quasi ordered by embeddability.
\vskip0.5cm

The graphs we consider are undirected, simple and have no loops. That is, a {\it graph} is a pair $G:=(V, E)$, where $E$ is a subset of $[V]^2$, the set of $2$-element subsets of $V$. Elements of $V$ are the {\it vertices} of $G$ and elements of $ E$ its {\it edges}. The graph $G$ be given, we denote by $V(G)$ its vertex set and by $E(G)$ its edge set. The {\it complement} of a graph $G=(V,E)$ is the graph $G^c$ whose vertex set is $V$ and edge set $E^c:=[V]^2\setminus  E$.

Throughout, $P :=(V, \leq)$ denotes an ordered set (poset), that is a set $V$ equipped with a binary relation $\leq$ on $V$ which is reflexive, antisymmetric and transitive. We say that two elements $x,y\in V$ are \emph{comparable} if $x\leq y$ or $y\leq x$, otherwise  we say they are \emph{incomparable}.  The \emph{comparability graph}, respectively the \emph{incomparability graph}, of a poset $P:=(V,\leq)$ is the undirected graph, denoted by $\comp(P)$, respectively $\inc(P)$, with vertex set $V$ and edges the pairs $\{u,v\}$ of comparable distinct vertices (that is, either $u< v$ or $v<u$) respectively incomparable vertices.
%To every ordered set $P:= (V, \leq) $ ( poset for short) we may associate two graphs: its \emph{comparability graph}, $\comp(P)$, and its \emph{incomparability graph}, $\inc( P)$. The vertices of $\comp(P)$ are the elements of  $V$ and  the edges are pairs of distinct  vertices $x$, $y$ which are comparable, that is for which $x<y$ or $y<x$; the graph $\inc( P)$ is the \emph{complement} of $\comp(P)$: it has the same vertices as $\comp(P)$ and as edges the pairs of vertices $x,y$ which are incomparable.

A result of Gallai,  1967 \cite {gallai},  quite famous and nontrivial, characterizes comparability graphs among graphs in terms of obstructions: a graph $G$ is the  comparability graph  of a poset if and only if  it does not contain as an induced subgraph a graph belonging to a minimal list of finite graphs.
Since the complement of a comparability graph is an incomparability graph, Gallai's result yields a similar characterization of incomparability graphs. In this paper,  we consider  incomparability graphs  as metric spaces by means of the distance of the shortest path. The metric properties of a  graph,  notably of an incomparability graph, and metric properties of  its complement seem  to be far apart. In general,  metric properties of graphs are based on paths and cycles. It should be noted that  incomparability graphs have no induced cycles of length at least five (\cite {gallai}; for a short proof see after Lemma \ref{lem:inducedpath}) while comparability graphs have no induced odd cycles but can have arbitrarily large induced even cycles.

In the sequel,  we will illustrate the specificity of the metric properties of incomparability graphs by  emphasising  the properties of paths. %Before providing the details,  let us indicate  some  results easy to state illustrating this, e.g. Theorem \ref{thm:pouzet-zaguia-pathmin}, Theorem \ref{thm:infinitepath-kite}, Lemma \ref{ball-infinitepath2}.

We start with few definitions. Let $G:=(V, E)$ be a graph.  If $A$ is a subset of $V$, the graph $G_{\restriction A}:=(A,  E\cap [A]^2)$ is the \emph{graph induced by $G$ on $A$}.  A \emph{path} is a graph $\mathrm P$ such that there exists a one-to-one map $f$ from the set $V(\mathrm P)$ of its vertices into an interval $I$ of the chain $\NN$ of nonnegative integers in such a way that $\{u,v\}$ belongs to $E(\mathrm P)$, the set of edges of $\mathrm P$,  if and only if $|f(u)-f(v)|=1$ for every $u,v\in V(\mathrm P)$.  If $I$ is finite, say $I=\{1,\dots,n\}$, then we denote that path by $\mathrm P_n$; its \emph{length} is $n-1$ (so, if $n=2$, $\mathrm P_2$ is made of a single edge, whereas if $n=1$, $\mathrm P_1$ is a single vertex). We denote by $\mathrm P_\infty$ the one way infinite path i.e. $I=\NN$.
If $x,y$ are two vertices of a graph $G:= (V, E)$, we denote by $d_G(x,y)$ the length of a shortest path joining $x$ and $y$ if any, and $d_G(x,y):= \infty$ otherwise. This defines a distance on $V$, the \emph{graphic distance}. A graph is \emph{connected} if any two vertices belong to some  path. The \emph{diameter} of $G$, denoted by  $\delta_{G}$, is the supremum of the set  $\{d_G(x,y) : x,y\in V\}$. If $A$ is a subset of $V$, the graph $G'$ induced by $G$ on $A$  is an \emph{isometric subgraph} of $G$  if $d_{G'}(x,y)=d_G(x,y)$ for all $x,y\in A$. The supremum of the length of induced finite paths of $G$, denoted by $D_G$,  is sometimes called the (induced) \emph{detour} of $G$ \cite{buckley-harary}.\\

The main results of the paper are presented in the next four subsections.  Section \ref{sec:application} is devoted to an application of one of our main results (Theorem \ref{thm:infinitepath-kite}). The remaining sections contain intermediate results and proofs of our main results.

\subsection{Induced paths of arbitrarily large length in incomparability graphs and in arbitrary graphs}
We now consider the question of the existence  of infinite  induced paths in incomparability graphs with infinite detour. In order to state our main result of this subsection we need to introduce the notions of direct sum and complete sum of graphs.
Let $G_n:=(V_n,E_n)$ for $n\in \NN$ be a family of graphs having pairwise disjoint vertex sets. The \emph{direct sum} of $(G_n)_{n\in \NN}$, denoted $\oplus_n G_n$, is the graph whose vertex set is $\bigcup_{n\in \NN}V_n$ and edge set $\bigcup_{n\in \NN}E_n$. The \emph{complete sum} of $(G_n)_{n\in \NN}$, denoted $\sum_n G_n$, is the graph whose vertex set is $\bigcup_{n\in \NN}V_n$ and edge set $\bigcup_{i\neq j}\{\{v,v'\} : v\in V_i \wedge v'\in V_j\}\cup  \bigcup_{n\in \NN}E_n$.\\

A necessary condition for the existence of an infinite induced path in a graph is to have infinite detour. On the other hand, the graphs consisting of the direct sum of finite paths of arbitrarily large length and the complete sum of finite paths of arbitrarily large length are (incomparability) graphs with  infinite detour and yet do not have an infinite induced path. We should mention that \emph{in the case of incomparability graphs, having infinite detour is equivalent to having a direct sum or a complete sum of finite paths of arbitrarily large length}. This is Theorem 2 from \cite{pouzet-zaguia20}.

%First, let us denote by $\sum_{n\geq 1} \mathrm P_n$ and  $\oplus_{n\geq 1} \mathrm P_n$  the complete sum of paths   $\mathrm P_n$ of length $n$ and the direct sum of paths $\mathrm P_n$ of length $n$.

\begin{theorem}[\cite{pouzet-zaguia20}]\label{thm:pouzet-zaguia-pathmin}Let $G$ be the incomparability graph of a poset. Then $G$ contains induced paths of arbitrarily large length if and only if $G$ contains $\sum_{n\geq 1} \mathrm P_n$ or $\oplus_{n\geq 1} \mathrm P_n$ as an induced subgraph.
\end{theorem}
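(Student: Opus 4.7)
The "$\Leftarrow$" direction is immediate: each of $\oplus_{n\geq 1}\mathrm P_n$ and $\sum_{n\geq 1}\mathrm P_n$ contains $\mathrm P_n$ as an induced subgraph for every $n$, so any graph containing one of these as an induced subgraph has induced paths of arbitrary length.

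For the non-trivial direction, the plan is, starting from a sequence $(Q_n)_{n\in\NN}$ of induced paths in $G=\inc(P)$ with $|V(Q_n)|\to\infty$, to extract either an induced $\sum_{n\geq 1}\mathrm P_n$ or an induced $\oplus_{n\geq 1}\mathrm P_n$. I first establish a structural lemma for a single induced path $Q:x_0,\dots,x_m$ in $\inc(P)$. By definition $x_i\parallel x_{i+1}$ and $x_i,x_j$ are comparable in $P$ whenever $|i-j|\geq 2$, so both $(x_{2k})_k$ and $(x_{2k+1})_k$ are chains of $P$. A short case analysis rules out local extrema: assuming $x_0<x_2>x_4$, the two sub-cases $x_0<x_4$ and $x_0>x_4$ each produce a contradiction, using the comparabilities of $x_1$ with $x_4$ and of $x_3$ with $x_0$ (forced by $|i-j|\geq 2$) and transitivity in $P$ against the incomparabilities $x_1\parallel x_0,x_2$ and $x_3\parallel x_2,x_4$. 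Consequently each of the even chain and the odd chain is monotone in $P$ along $Q$, so every induced path of $\inc(P)$ is already a \emph{monotone fence}.

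The second step is a Ramsey reduction on pairs of such fences. Since the chains of $Q_j$ are monotone in $P$, for each $u\in V(Q_i)$ the three sets $\{v\in V(Q_j):v<u\}$, $\{v\in V(Q_j):u\parallel v\}$, $\{v\in V(Q_j):u<v\}$ are consecutive intervals along $Q_j$, and a finite pigeonhole produces arbitrarily long consecutive sub-paths $Q'_i\subseteq Q_i$ and $Q'_j\subseteq Q_j$ on which the cross-relation in $P$ is \emph{uniform} in one of the three types $<$, $>$, $\parallel$. An iterated extraction (diagonal Ramsey) then yields an infinite subfamily of pairwise vertex-disjoint induced sub-paths $(Q^*_n)$ with $|V(Q^*_n)|\to\infty$ and the same uniform type on every pair $\{Q^*_i,Q^*_j\}$.

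The conclusion branches cleanly. If the common type is $\parallel$, every cross-pair is an edge of $\inc(P)$, so taking inside $Q^*_n$ an initial segment of length $n$ embeds $\sum_{n\geq 1}\mathrm P_n$ as an induced subgraph of $G$. If the common type is $<$ (or, symmetrically, $>$), every cross-pair is comparable in $P$, hence a non-edge of $\inc(P)$, so the same choice of initial segments embeds $\oplus_{n\geq 1}\mathrm P_n$. The step I expect to require the most care is the intermediate pigeonhole: producing long \emph{consecutive} sub-paths with uniform cross-type while preserving the monotone fence structure and arranging vertex-disjointness across the whole infinite family. This requires coordinating the Ramsey extraction with the three-interval geometry along each path and should be set up as a careful iterated construction.
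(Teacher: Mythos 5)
This theorem is imported from \cite{pouzet-zaguia20}; the present paper states it without proof, so there is no in-paper argument to match yours against, and I assess your plan on its own merits. The skeleton is sound: the reduction of every induced path of $\inc(P)$ to a monotone fence is exactly Lemma \ref{lem:inducedpath} (which gives the stronger fact that $x_i<x_j$ whenever $j\geq i+2$, after orienting), and the endgame is correct --- uniform type $\parallel$ gives the complete sum, uniform comparability gives the direct sum; for the latter you may as well merge the types $<$ and $>$ into a single colour ``comparable'', since $\oplus_{n}\mathrm P_n$ only requires every cross-pair to be a non-edge.

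Two points, however, are genuinely missing rather than merely deferred. First, the per-row ``three consecutive intervals'' structure (which is correct only up to one exceptional index at each boundary) is \emph{not} by itself enough to produce long consecutive sub-paths of uniform cross-type: if the boundary separating the $<$ and $>$ intervals could sit at an arbitrary position in each row, a generic placement would defeat all large monochromatic blocks, so a ``finite pigeonhole'' as stated does not go through. What makes the step work is that these boundaries sweep monotonically (with lag two) as $u$ moves along $Q_i$: since $u_k<u_{k'}$ for $k'\geq k+2$, transitivity forces $\{v\in V(Q_j): v<u_k\}\subseteq\{v: v<u_{k'}\}$ and dually for the sets $\{v: u_k<v\}$. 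With this staircase property a middle-row/middle-column case split yields monochromatic consecutive blocks of size linear in $\min(|V(Q_i)|,|V(Q_j)|)$, which is also exactly what you need so that each frozen piece $Q^*_n$ retains length at least $n$; you should state and use this monotonicity explicitly, as it is the real engine of the step you flagged as delicate. Second, the hypothesis supplies arbitrarily long induced paths but not pairwise disjoint ones, and the whole cross-type analysis presupposes disjointness; this must be arranged first, e.g.\ greedily, using that deleting the $m$ vertices already used from an induced path of length at least $(m+1)(L+1)$ leaves an induced sub-path of length at least $L$. The diagonal extraction itself is then the standard freezing argument (each $Q^*_n$ is shrunk once, its type relative to infinitely many later candidates is fixed by a pigeonhole over the finitely many sub-paths of the prescribed length and the three types, so pair-types depend only on the smaller index, and a final pigeonhole makes them constant). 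With these two additions your proof is complete.
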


%If we look at graphs with no infinite induced paths, the difference between  incomparability graphs  and general graphs is rather substantial.

For general graphs, the statement of Theorem \ref{thm:pouzet-zaguia-pathmin} is false. Indeed, in \cite{pouzet-zaguia20} we exhibited uncountably many graphs of cardinality $\aleph_0$, containing finite induced paths of unbounded  length  and neither  a direct sum nor a complete sum of finite paths of unbounded length. In particular, these graphs do not have an infinite induced path. \\ %To be precise, let us call a  graph $G$ \emph{path-minimal} \cite{pouzet-zaguia20} if it contains finite induced paths of unbounded  length and if $G$ embeds into every induced subgraph containing finite induced paths of unbounded  length. The \emph{age} of a graph $G$ is the collection of finite graphs, considered up to isomorphy,  that embed in $G$ (or alternatively, that are isomorphic to some induced subgraph of $G$).   We recall that an age of finite graphs, and more generally a class of finite graphs, is \emph{well quasi ordered} (w.q.o. for short) if it contains no infinite antichain, that is an infinite set  of graphs $G_n$ pairwise incomparable with respect to embeddability.

%As stated in Theorem \ref{thm:pouzet-zaguia-pathmin},  there are essentially two path-minimal graphs among  incomparability graphs of posets, namely  the complete sum of paths of arbitrarily large length and the direct sum of paths of arbitrarily large length. For general graphs, this situation is quite different. We proved in  \cite{pouzet-zaguia20}:
%
%
%\begin{theorem}\label{thm:uncoutable ages}
%There are $2^{\aleph_0}$ path-minimal graphs whose ages are pairwise incomparable with respect to set inclusion and w.q.o. with respect to  embeddability.
%\end{theorem}
%
%Our construction uses uniformly recurrent sequences, and in fact Sturmian sequences (or billiard sequences) \cite{fogg} and \cite{morse-hedlund}, and lexicographical sums of labelled graphs \cite{pouzet-zaguia20}.

In the case of incomparability graphs of posets coverable by two chains, having infinite detour is equivalent to the existence of an infinite induced path. Our first result is this.

\begin{theorem}\label{thm:widthtwo}Let $P$ be a poset coverable by two chains (that is totally  ordered sets). If $\inc(P)$, the incomparability graph of $P$, is connected then the following properties are equivalent:
\begin{enumerate}[(i)]
\item $\inc(P)$ contains the direct sum of induced paths of arbitrarily large length;
\item the detour of $\inc(P)$ is infinite;
\item the diameter of $\inc(P)$ is infinite;
\item $\inc(P)$ contains an infinite induced path.
\end{enumerate}
\end{theorem}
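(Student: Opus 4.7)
The plan is to close the cycle $(iv) \Rightarrow (i) \Rightarrow (ii) \Rightarrow (iii) \Rightarrow (iv)$. Three of these links are routine. For $(iv) \Rightarrow (i)$, slice the infinite induced ray into disjoint finite subpaths of lengths $1,2,3,\ldots$ separated by at least one skipped vertex. $(i) \Rightarrow (ii)$ is immediate since a direct sum of arbitrarily long induced paths witnesses infinite detour, and $(iii) \Rightarrow (ii)$ is the standard fact that a shortest path is always induced, so $\delta_G \leq D_G$. The non-trivial links are $(ii) \Rightarrow (iii)$ and $(iii) \Rightarrow (iv)$. Throughout, I exploit that, because $P = C_1 \cup C_2$, every edge of $\inc(P)$ joins the two chains; hence $\inc(P)$ is bipartite and triangle-free, and the neighborhood of every vertex is a convex interval of the opposite chain.

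\textbf{Monotonicity lemma.} The central technical tool is a monotonicity lemma: in every induced path $v_0, v_1, v_2, \ldots$ in $\inc(P)$, indexed so that $v_{2k} \in C_1$ and $v_{2k+1} \in C_2$, both subsequences $(v_{2k})_k$ and $(v_{2k+1})_k$ are strictly monotone in their chain. The proof is a short case analysis: a zig-zag $v_{2k-2} < v_{2k} > v_{2k+2}$ in $C_1$ forces one of $v_{2k \pm 1}$ to be comparable to an adjacent $C_1$-vertex in a way inconsistent with the incomparability required by the adjacent edge. From this lemma together with the interval-neighborhood property, one derives $v_{2k-3} \leq g(v_{2k})$, where $g(a) := \max\{b \in C_2 : b < a\}$; this forces $g(v_{2k})$ to grow along the path and eventually to exceed $f(v_0) := \min\{b \in C_2 : b > v_0\}$, making the critical intersection $I(v_0) \cap I(v_{2k}) = (g(v_{2k}), f(v_0))$ empty and forbidding a length-$2$ shortcut from $v_0$ to $v_{2k}$.

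\textbf{Dealing with the non-trivial implications.} For $(ii) \Rightarrow (iii)$, Theorem~\ref{thm:pouzet-zaguia-pathmin} combined with triangle-freeness (which rules out $\sum_n \mathrm P_n$, as three singleton components would yield a $K_3$) yields $\oplus_n \mathrm P_n$ as an induced subgraph. By iterating the interval-intersection obstruction across $2,4,6,\ldots$ layers of potential shortcut paths, the endpoints of the $n$-th disjoint induced path are forced to have graph distance in $\inc(P)$ tending to infinity with $n$, proving $(iii)$. For $(iii) \Rightarrow (iv)$, I fix a vertex $w$ of infinite eccentricity (guaranteed by the triangle inequality from infinite diameter) and run a breadth-first search from $w$, producing non-empty level sets $L_0, L_1, L_2, \ldots$. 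If every level is finite, K\"onig's lemma yields an infinite ray $w = x_0, x_1, x_2, \ldots$ in the BFS tree with $x_k \in L_k$, which is automatically an induced path because any chord $x_i x_j$ with $j > i+1$ would shortcut $d(w, x_j)$. Otherwise, let $L_k$ be the first infinite level; by pigeonhole some $u \in L_{k-1}$ has infinitely many neighbors in $L_k$, the interval property makes this neighborhood an unbounded interval of the opposite chain, and the monotonicity lemma allows me to extract an infinite induced ray going to infinity along that chain.

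\textbf{Main obstacle.} The subtlest step is $(iii) \Rightarrow (iv)$ in the case of an infinite BFS level: the naive K\"onig extraction fails at a vertex of infinite degree, and one must carefully combine the interval-neighborhood property with the monotonicity lemma to construct an induced ray inside the infinite branching. The width-two hypothesis is essential here, since it forces every neighborhood to be a single interval rather than an arbitrary set, giving the rigid staircase shape of induced paths required for this extraction. A secondary technical point is the iterated interval-intersection argument needed for $(ii) \Rightarrow (iii)$, which must rule out shortcut paths of every even length $2,4,6,\ldots$ between the endpoints of the $n$-th long induced path; each iteration uses the interval structure together with the monotonicity of $g$ and $f$ along chains.
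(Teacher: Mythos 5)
Your cycle of implications is the same as the paper's, and the two routine links are handled identically; the substance of the theorem lives entirely in $(ii)\Rightarrow(iii)$ and $(iii)\Rightarrow(iv)$, and it is exactly there that your sketch has genuine gaps. For $(ii)\Rightarrow(iii)$, the detour through Theorem~\ref{thm:pouzet-zaguia-pathmin} to obtain $\oplus_n \mathrm P_n$ buys you nothing: Example~\ref{thm:infi-detour-no-path} (Figure~\ref{width-three}) is a connected incomparability graph of diameter $3$ that contains $\oplus_n \mathrm P_n$ as an induced subgraph, so the presence of that configuration does not force infinite diameter and all the work is in the width-two-specific estimate. What is actually needed is a quantitative lower bound on $d_{\inc(P)}(x_0,x_n)$ in terms of the length $n$ of an induced path from $x_0$ to $x_n$; the paper gets $d_{\inc(P)}(x_0,x_n)\geq \lfloor n/3\rfloor+1$ by extracting from the induced path an alternating sequence of oscillation $\lfloor n/3\rfloor$ (Lemma~\ref{lem:oscillation2}) and invoking the fact that oscillation bounds the graphic distance from below (Lemma~\ref{lem:oscillation-distance}, i.e.\ I.2.4 of \cite{pouzet78}). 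Your ``iterated interval-intersection obstruction across $2,4,6,\dots$ layers'' is precisely this missing argument, but it is only named, not carried out: excluding a length-$2$ shortcut via $I(v_0)\cap I(v_{2k})=\varnothing$ does not self-evidently iterate to exclude shortcuts of every bounded length, and no induction is set up.

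The gap in $(iii)\Rightarrow(iv)$ is more serious because the branch you wave at is unavoidable. Your BFS/K\"onig argument works only when every level $L_k$ is finite, in which case you get an isometric (hence induced) ray; but Example~\ref{thm:noisometric} (Figure~\ref{width-two}) is a width-two poset with connected incomparability graph of infinite diameter and \emph{no} infinite isometric ray, so some BFS level must be infinite there and the finite-level branch cannot be the whole proof. In the infinite-level case, a vertex $u$ with infinitely many neighbours in $L_k$ gives you an unbounded order interval of the opposite chain all incomparable to $u$ --- but that is an infinite induced \emph{star}, whose vertices other than $u$ are pairwise comparable and hence pairwise non-adjacent; ``the monotonicity lemma allows me to extract an infinite induced ray'' is an assertion, not a construction, and it is not clear how the infinite-diameter hypothesis re-enters at this point. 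The paper avoids the dichotomy altogether: using Lemma~\ref{lem:oneside} it builds a chain $x_0<x_3<x_6<\cdots$ with $d_G(x_{3n},x_{3(n+1)})=3$, joins consecutive terms by paths of length $3$, proves that the induced subgraph on the union of these paths is connected, of infinite diameter, \emph{and locally finite} (this last point, Claim~\ref{claim:pathkonig}, is the crux and uses Lemma~\ref{lem:inducedpath}), and only then applies K\"onig's lemma. Local finiteness of a suitable subgraph is exactly what your BFS lacks; you would need to supply an analogue of that claim, or a genuine construction in the infinite-level case, for your route to close.
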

%\begin{proof} {thm:widthtwo}
%The implication $(i) \Rightarrow (ii)$ is obvious. The implication $(ii) \Rightarrow (iii)$ follows from Proposition \ref{prop:oscillation} given in Section \ref{posetswidth2}. The implication $(iii) \Rightarrow (iv)$ follows from
%Theorem \ref{thm:infinitepath-kite} below. The implication $(iv) \Rightarrow (i)$ is obvious.
%\end{proof}

%A necessary condition for the existence of an infinite induced path in a graph is to have infinite detour.

A proof of Theorem \ref{thm:widthtwo} will be provided in Section \ref{proof:thm:widthtwo}.

The implication  $(i)\Rightarrow (iv)$ of Theorem \ref{thm:widthtwo} becomes false if the condition "coverable by two chains" is dropped (see Figure \ref{width-three} for an example). Indeed,

\begin{example}\label{thm:infi-detour-no-path}There exists a poset with no infinite antichain whose incomparability graph is connected and embeds the direct sum of finite induced paths of arbitrarily large length and yet does not have an infinite induced path (See Figure \ref{width-three}).
\end{example}

Example \ref{thm:infi-detour-no-path} and a proof that it verifies the required properties will be given in Section \ref{proof:thm:infi-detour-no-path}.

%\begin{figure}[h]
%\begin{center}
%\includegraphics[width=4in]{width-three.pdf}
%\end{center}
%\caption{The Hasse diagram of a poset of width three and its incomparability graph that has a vertex $y$ of infinite induced detour but no infinite induced path.}
%\label{width-three}
%\end{figure}

\begin{figure}[h]
\begin{center}
\leavevmode \epsfxsize=2.4in \epsfbox{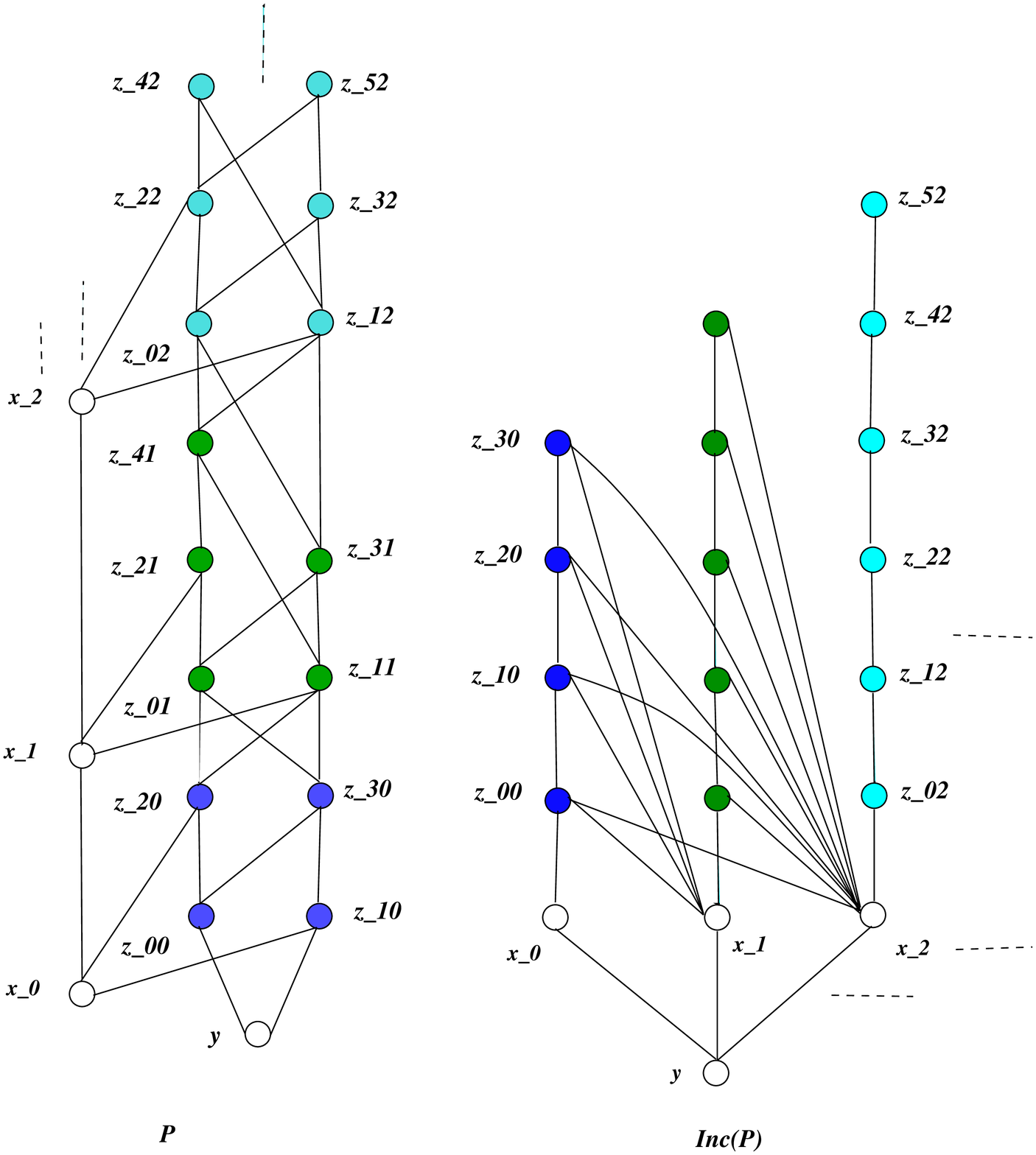}
\end{center}
\caption{The Hasse diagram of a poset of width three and its incomparability graph that has a vertex $y$ of infinite induced detour but no infinite induced path.}
\label{width-three}
\end{figure}

\subsection{Infinite induced  paths, combs and kites}
We now consider the question of the existence  of infinite  induced paths in incomparability graphs with infinite diameter. In order to state our main result of this subsection we need to introduce two types of graphs: comb and kite.

%\subsubsection{Combs and kites}

Let us recall that a  graph  $G:=(V,E)$ is a \emph{caterpillar} if the graph obtained by removing from $V$ the vertices of degree one is a path (finite or not, reduced to one vertex or empty). A \emph{comb} is a caterpillar such
that every vertex is adjacent to at most one vertex of degree one. Incidentally, a path on three vertices is not a comb. It should be mentioned that caterpillars are incomparability graphs of interval orders coverable by two chains (see Lemma 14 of \cite{zaguia2008}).

We now give the definition of a \emph{kite}. This is a graph obtained from an infinite path $\mathrm P_\infty :=(x_i)_{i\in \NN}$ by adding a new set of vertices $Y$ (finite or infinite). We distinguish three types of kites (see Figure \ref{fig:comb-kite}) depending on how the vertices of $Y$ are adjacent to the vertices of $\mathrm P_\infty$.

A \emph{kite of type $(1)$}: every vertex of $Y$ is adjacent to exactly two vertices of $\mathrm P_\infty$ and these two vertices are consecutive in $\mathrm P_\infty$. Furthermore, two distinct vertices of $Y$ share at most one common neighbour in $\mathrm P_\infty$.

A \emph{kite of type $(2)$}: every vertex of $Y$ is adjacent to exactly three vertices of $\mathrm P_\infty$ and these three vertices must be consecutive in $\mathrm P_\infty$. Furthermore, for all $x,x'\in Y$, if $x$ is adjacent to $x_i,x_{i+1},x_{i+2}$ and $x'$ is adjacent to $x_{i'},x_{i'+1},x_{i'+2}$  then $i+2\leq i'$ or $i'+2\leq i$.

A \emph{kite of type $(3)$}: every vertex of $Y$ is adjacent to exactly two vertices of $\mathrm P_\infty$ and these two vertices must be at distance two in $\mathrm P_\infty$. Furthermore, for all $x,x'\in X$, if $x$ is adjacent to $x_i$ and $x_{i+2}$ and $x'$ is adjacent to $x_{i'}$ and $x_{i'+2}$ then $i+2\leq i'$ or $i'+2\leq i$.

%\begin{figure}[h]
%\begin{center}
%\includegraphics[width=4in]{comb-kite.pdf}
%\end{center}
%\caption{A comb and the three types of Kites.}
%\label{fig:comb-kite}
%\end{figure}

\begin{figure}[h]
\begin{center}
\leavevmode \epsfxsize=3.5in \epsfbox{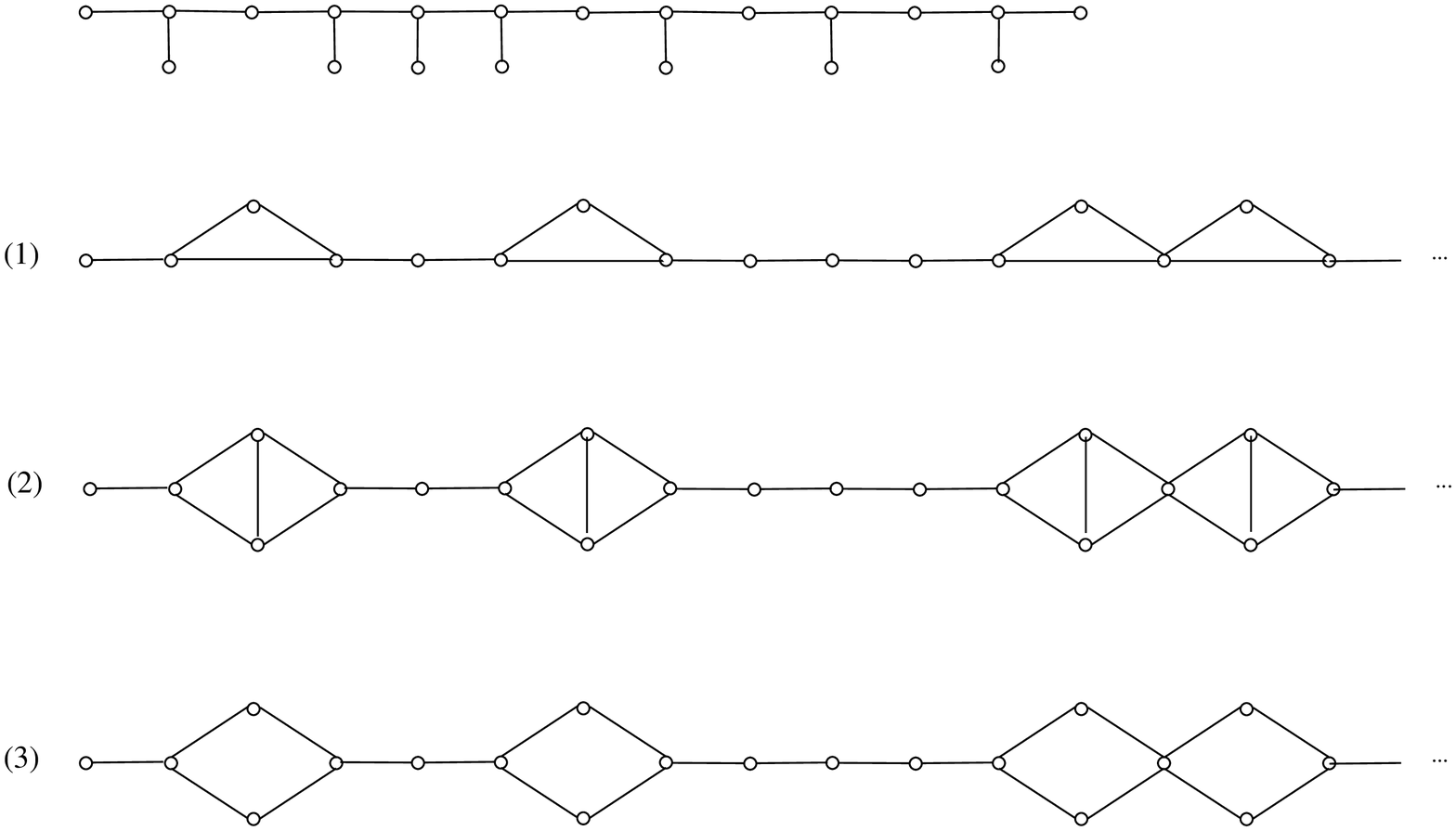}
\end{center}
\caption{A comb and the three types of kites.}
\label{fig:comb-kite}
\end{figure}

\begin{theorem}\label{thm:infinitepath-kite}If $G$ is a connected incomparability graph with infinite diameter. Then
\begin{enumerate}[$(1)$]
  \item Every vertex of $G$ has an induced path of infinite diameter starting at it.
  \item If the set of vertices of degree at least $3$ in $G$ has infinite diameter, then $G$ contains an induced comb or an induced kite having an infinite diameter and infinitely many vertices of degree at least $3$.
\end{enumerate}
\end{theorem}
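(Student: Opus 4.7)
The plan is to prove part~$(1)$ by constructing, from any fixed vertex $v_0$, an induced one-way infinite path via a tree-extension argument, and then to use such a ray as the spine of the comb or kite required in part~$(2)$.

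For part~$(1)$, I consider the tree $T$ whose nodes are the induced paths of $G$ starting at $v_0$, with the child relation given by one-step extension. Because $G$ is connected with infinite diameter, $v_0$ has infinite eccentricity, and every shortest path from $v_0$ to a far vertex is an induced path; so $T$ has nodes at every finite level. If $T$ is locally finite, K\"onig's lemma gives an infinite branch, which is the desired induced ray. The main obstacle is the non-locally-finite case: if $T$ had no infinite branch, a standard descent yields an induced path $\pi=(v_0,\dots,v_k)$ whose endpoint $v_k$ has infinitely many valid one-step extensions $u_1,u_2,\dots$ in $T$ (equivalently, infinitely many neighbors in $G$ that are non-adjacent to $v_0,\dots,v_{k-1}$), each admitting only finite further induced extensions whose lengths still tend to infinity with $i$. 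Applying Ramsey's theorem to the pairs of the $u_i$'s, I pass to an infinite subsequence in which the $u_i$'s are pairwise non-adjacent or pairwise adjacent in $G$. In the non-adjacent case the $u_i$'s form an infinite chain in the underlying poset $P$ with $v_k$ incomparable to every element; for each $i$ the immediate extension $x_i$ of $u_i$ is incomparable to $u_i$ but, not being adjacent to $v_k$, is comparable to it, and a short positional analysis --- first forcing $x_i$ to lie strictly between $u_{i-1}$ and $u_{i+1}$ in the chain, then propagating via transitivity --- makes $v_k$ comparable to some element of the chain, a contradiction. The pairwise-adjacent case is handled by a parallel analysis that instead invokes the no-induced-$C_\ell$ property (for $\ell\geq 5$) of incomparability graphs on the subgraph generated by $\{v_k,u_i,u_j,x_i,x_j\}$ for suitable $i\neq j$, again reaching a contradiction.

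For part~$(2)$, I begin with an induced ray $\gamma=v_0 v_1 v_2 \cdots$ produced by part~$(1)$; by applying part~$(1)$ iteratively along a cofinal sequence of the degree-$\geq 3$ set $D_{\geq 3}$ (which has infinite diameter by hypothesis), I arrange that $\gamma$ meets $D_{\geq 3}$ in infinitely many vertices. The key structural observation is that, for any vertex $y\notin V(\gamma)$ adjacent to some vertex of $\gamma$, the index set $\{i:y\sim v_i\}$ must be contained in an integer interval of length at most $2$ --- otherwise $y$ together with a sub-segment of $\gamma$ would form an induced cycle of length at least $5$, forbidden in incomparability graphs. This leaves exactly four possible attachment patterns for such $y$: a single vertex of $\gamma$ (a comb tooth), two consecutive vertices (kite type~$(1)$), two vertices at distance $2$ (kite type~$(3)$), or three consecutive vertices (kite type~$(2)$). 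A Ramsey/pigeonhole argument over the infinitely many degree-$\geq 3$ vertices of $\gamma$ selects one of these four patterns to recur infinitely often; a final thinning of the attached vertices $y_i$ to enforce the pairwise non-adjacency and separation conditions in the definitions of comb and kite (again using the $C_\ell$-freeness) produces the required induced subgraph with infinite diameter and infinitely many vertices of degree at least $3$.

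The principal difficulty is the non-local-finiteness issue in part~$(1)$: showing that the \emph{infinite star of finite induced paths} configuration, which would obstruct an infinite branch of $T$, is forbidden in an incomparability graph. Reducing this obstruction to a transitivity contradiction in the underlying poset via the positional analysis above is the delicate step; once part~$(1)$ is in hand, part~$(2)$ becomes essentially a classification of attachments to $\gamma$ using Ramsey and $C_\ell$-freeness.
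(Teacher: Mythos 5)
Your approach to part $(1)$ has a structural flaw that no amount of work on the ``positional analysis'' can repair. After the descent you are left with an induced path $\pi=(v_0,\dots,v_k)$ whose endpoint has infinitely many one-step extensions $u_i$, each continuing only to bounded further length, with those bounds unbounded in $i$; you then propose to refute this configuration using only the fact that $G$ is an incomparability graph (Ramsey on the $u_i$, transitivity in $P$, $C_{\ell}$-freeness). But this configuration is \emph{not} forbidden in connected incomparability graphs: Example \ref{thm:infi-detour-no-path} (Figure \ref{width-three}) is a connected incomparability graph of diameter $3$ containing a vertex $y$ that starts induced paths of every finite length yet lies on no infinite induced path. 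For that vertex your tree $T$ has nodes at every level and no infinite branch, so your own descent produces exactly the node you are trying to exclude; hence no correct completion of that step exists. The underlying problem is that the hypothesis of infinite \emph{diameter} enters your argument only to guarantee that $T$ has nodes at every level, and that is strictly weaker than what is needed. The paper instead uses the infinite diameter throughout: Lemma \ref{lem:oneside} (which rests on the order-convexity of balls and Lemma \ref{lem:inequality}) yields an increasing chain $x_0<x_3<x_6<\cdots$ with $d_G(x_{3n},x_{3(n+1)})=3$; consecutive terms are joined by geodesics; the union is shown to be locally finite and of infinite diameter; and only then is K\"onig's lemma applied. Note also that even in your locally finite case K\"onig gives an infinite induced path, but nothing in your argument forces it to have infinite diameter in $G$, which is part of the statement (and can genuinely fail for infinite induced paths in incomparability graphs).

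Part $(2)$ has two further gaps. First, the ``key structural observation'' is false as stated: $C_{\ell}$-freeness for $\ell\geq 5$ only bounds by $2$ the gap between \emph{consecutive} neighbours of $y$ on $\gamma$; it does not confine $N(y)\cap V(\gamma)$ to an interval of length at most $2$. A vertex $y$ adjacent to $x_{l}$ and $x_{r}$ with $r\geq l+3$ is precisely the paper's type $(3)$, which it handles by deleting the interior vertices $x_{l+2},\dots,x_{r-1}$ of the ray to produce the comb or kite; your four-pattern classification misses this case. Second, obtaining a ray that meets the degree-$\geq 3$ set in a subset of infinite diameter is not a routine iteration of part $(1)$: it is the content of Claim \ref{claim:part1}, which again requires choosing an increasing chain of degree-$\geq 3$ vertices at rapidly growing distances, joining them by geodesics, and proving local finiteness before invoking K\"onig's lemma.
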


Theorem \ref{thm:infinitepath-kite} will be proved in Section \ref{section:proof-thm:infinitepath-kite} (an important ingredient of its proof is Theorem \ref{thm:orderconvex} below).

\subsection{Infinite isometric paths in incomparability graphs}
A basic result about the existence of an infinite isometric path in a graph is K\"onig's lemma \cite{konig}. Recall that a graph is \emph{locally finite} if every vertex has a finite degree.

 \begin{theorem}[\cite{konig}] \label{thn:konig}Every connected, locally finite, infinite graph contains an isometric infinite path.
 \end{theorem}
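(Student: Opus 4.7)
The plan is to fix a base vertex $v_0\in V(G)$ and build a breadth-first search (BFS) tree rooted at $v_0$, then extract an infinite ray via the classical König lemma on finitely branching trees, and finally verify that this ray is an isometric path in $G$.

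For each $n\in \NN$, let $S_n:=\{x\in V(G) : d_G(v_0,x)=n\}$. I first show by induction that each $S_n$ is finite: $S_0=\{v_0\}$, and every vertex in $S_{n+1}$ is a neighbour in $G$ of some vertex in $S_n$ (the penultimate vertex of a shortest $v_0$--$x$ path), so $|S_{n+1}|\leq \sum_{x\in S_n}\deg_G(x)<\infty$ by local finiteness. Since $G$ is connected, $V(G)=\bigcup_{n\in\NN}S_n$; since $G$ is infinite while each $S_n$ is finite, every $S_n$ must be nonempty (otherwise $V(G)$ would be a finite union of finite sets). Next, for each vertex $x\in S_{n+1}$, choose one neighbour $p(x)\in S_n$, and let $T$ be the subgraph of $G$ with vertex set $V(G)$ whose edge set consists of the pairs $\{x,p(x)\}$ for $x\neq v_0$. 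By construction, $T$ is connected and acyclic, hence a tree rooted at $v_0$; it is infinite because it contains every nonempty $S_n$, and every vertex of $T$ has finite degree because $T$ is a subgraph of the locally finite graph $G$.

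Applying König's lemma for finitely branching trees to $T$ (which is routine: since each vertex has finite degree and the tree is infinite, one can greedily pick at each level a child whose subtree is infinite) yields an infinite ray $v_0, v_1, v_2, \dots$ in $T$ with $v_{n+1}$ a child of $v_n$, and therefore $v_n\in S_n$ for every $n$. To conclude, I verify the ray is isometric in $G$. For $i\leq j$ the subpath $v_iv_{i+1}\cdots v_j$ of $T$ gives $d_G(v_i,v_j)\leq j-i$. Conversely, the triangle inequality applied in $G$ yields
\[
j=d_G(v_0,v_j)\leq d_G(v_0,v_i)+d_G(v_i,v_j)=i+d_G(v_i,v_j),
\]
so $d_G(v_i,v_j)\geq j-i$. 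Combining the two inequalities gives $d_G(v_i,v_j)=|i-j|$, so the induced path $v_0,v_1,v_2,\dots$ is an isometric subgraph of $G$ of the required form.

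The only potentially delicate point is making sure the BFS tree is simultaneously infinite and finitely branching; once both are established, the tree version of König's lemma and the triangle inequality deliver the ray and its isometry essentially for free.
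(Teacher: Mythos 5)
Your proof is correct: the level sets $S_n$ are finite and nonempty, the BFS tree is infinite and finitely branching, the tree form of K\H{o}nig's lemma yields a ray with $v_n\in S_n$, and the triangle inequality forces $d_G(v_i,v_j)=|i-j|$, which makes the ray both induced and isometric. The paper states this result as a classical theorem of K\H{o}nig with only a citation and no proof, and your argument is the standard one for it, so there is nothing to compare beyond noting that your write-up supplies the details the paper omits.
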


%A strengthening due to Watkins \cite{watkins}.
Moreover,

\begin{theorem}\label{thm:polat} If  a connected graph $G$ has an infinite isometric path, then every vertex has an isometric path
starting at it.
\end{theorem}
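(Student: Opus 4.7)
The plan is to start from the given infinite isometric path $R=(r_n)_{n\in\NN}$, pick an arbitrary vertex $v$, and study the distance function $f(n):=d_G(v,r_n)$. Since $d_G(r_n,r_{n+1})=1$, the triangle inequality gives $f(n+1)\leq f(n)+1$, so the integer-valued sequence $n\mapsto f(n)-n$ is non-increasing. On the other hand, because $R$ is isometric, $f(n)\geq d_G(r_0,r_n)-d_G(v,r_0)=n-d_G(v,r_0)$, so $f(n)-n$ is bounded below by $-d_G(v,r_0)$. A bounded, non-increasing integer sequence is eventually constant, so there exist $N_0$ (which we may take $\geq d_G(v,r_0)$) and $c\in\ZZ$ with $c\geq -N_0$ such that
\[
d_G(v,r_n)=n+c\qquad\text{for all }n\geq N_0.
\]

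The next step is to splice a shortest $v$--$r_{N_0}$ path with the tail of $R$. Pick any geodesic $Q=(u_0=v,u_1,\dots,u_{N_0+c}=r_{N_0})$ of length $N_0+c\geq 0$, and set
\[
P\;:=\;(u_0,u_1,\dots,u_{N_0+c},r_{N_0+1},r_{N_0+2},\dots).
\]
Because consecutive vertices of $P$ are adjacent (by construction of $Q$ and of $R$), and because the function $u\mapsto d_G(v,u)$ takes the value $i$ at position $i$ along $P$ (being $i$ on $Q$ since sub-geodesics of a geodesic are geodesics, and being $N_0+c+k=i$ at $r_{N_0+k}$ by the formula above), the vertices of $P$ are pairwise distinct. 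Hence $P$ is a one-way infinite path starting at $v$.

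It remains to check that $P$ is isometric. For two vertices $u_i,u_j$ both on $Q$ with $i\leq j$, the distance along $P$ is $j-i$ and $d_G(u_i,u_j)\geq d_G(v,u_j)-d_G(v,u_i)=j-i$, so equality holds. For $u_i$ on $Q$ and $r_{N_0+k}$ on the tail, the distance along $P$ is $(N_0+c-i)+k$ and $d_G(u_i,r_{N_0+k})\geq d_G(v,r_{N_0+k})-d_G(v,u_i)=(N_0+c+k)-i$, again giving equality. For two points on the tail, $R$ itself is isometric. Therefore $P$ is an infinite isometric path starting at $v$.

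The only non-routine observation is the eventual linearity $d_G(v,r_n)=n+c$, and this drops out immediately from the combination of 1-Lipschitzness (upper bound on the one-step increment of $f$) with the isometricity of $R$ (lower bound forcing $f(n)\to\infty$ at unit rate). Once that is in hand, the construction of $P$ by concatenation and its verification are straightforward, and no assumption of local finiteness or countability of $G$ is required.
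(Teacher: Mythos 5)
Your proof is correct. The paper itself does not prove Theorem \ref{thm:polat}: it only cites Watkins (Lemma 3.2 of \cite{watkins}) for the locally finite case and Polat (Theorem 3.5 and Lemma 3.7 of \cite{polat}) for the general case. What you supply is a complete, self-contained and elementary argument: the observation that $n\mapsto d_G(v,r_n)-n$ is non-increasing (by $1$-Lipschitzness along the ray) and bounded below (by isometricity of $R$ and the triangle inequality through $r_0$), hence eventually constant, so that $d_G(v,r_n)=n+c$ for $n\geq N_0$; then the splice of a $v$--$r_{N_0}$ geodesic with the tail of $R$, with isometry of the concatenation verified via the lower bound $d_G(x,y)\geq |d_G(v,x)-d_G(v,y)|$. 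All steps check out, including the implicit point that a vertex sequence with consecutive vertices adjacent and $d_G$ equal to the index difference is automatically an induced (hence isometric, in the paper's sense) path, and the distinctness of the spliced vertices via injectivity of $u\mapsto d_G(v,u)$ along $P$. Your argument needs neither local finiteness nor countability, so it in fact covers the general case directly; its main value relative to the paper is that it replaces an external citation by a short classical proof.
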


Theorem \ref{thm:polat} was proved by Watkins in the case of locally finite graphs (see \cite{watkins}, Lemma 3.2). The general case is contained in Theorem 3.5 and Lemma 3.7 of \cite{polat}.\\

A necessary condition for a graph to have an infinite isometric path is to have infinite diameter. Note that a graph has an infinite diameter if and only if it has finite isometric paths of  arbitrarily large length. The existence of such paths does not necessarily imply the existence of an infinite isometric path even if the graph is connected.

%Looking at the question of the existence of infinite isometric paths.

%\begin{figure}[h]
%\begin{center}
%\includegraphics[width=4in]{width2-no-isometric..pdf}
%\end{center}
%\caption{The Hasse diagram of a poset of width two whose incomparability graph is connected, has infinite diameter but no infinite isometric path.}
%\label{width-two}
%\end{figure}

\begin{figure}[h]
\begin{center}
\leavevmode \epsfxsize=2in \epsfbox{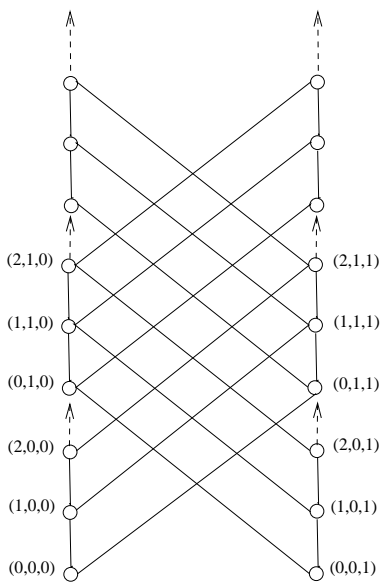}
\end{center}
\caption{The Hasse diagram of a poset of width two whose incomparability graph is connected, has infinite diameter but no infinite isometric path.}
\label{width-two}
\end{figure}

\begin{example}\label{thm:noisometric} There exists a poset coverable by two chains whose incomparability graph is connected, having infinite diameter and no isometric infinite path (see Figure \ref{width-two}).
\end{example}

We provide Example \ref{thm:noisometric} and a proof that it verifies the required properties in Section \ref{section:proof-thm:noisometric}.

%We should mention that the conclusion of Theorem  \ref{thm:polat}  is false if the condition "isometric" is dropped (take an infinite path and add a vertex adjacent to all vertices of the infinite path).

%If the graph is not locally finite, the graph may contain an infinite path without containing an isometric, or even an induced path.

%
%If the graph is the incomparability graph of a poset, much weaker conditions that local finiteness imply the existence of infinite induced paths.
%

%Neither in (1) nor in (2) in Theorem \ref{thm:infinitepath-kite} one can assume the induced subgraphs  to be isometric.
%Indeed:

We obtain a positive result in the case of incomparability graphs of interval orders with no infinite antichains. A poset $P$ is an {\it interval order} if $P$ is isomorphic to a subset $\mathcal J$ of the set $Int(C)$ of non-empty intervals of a chain $C$, ordered as follows: if $I, J\in
Int(C)$, then
\begin{equation}\label{ordre-sur-intervalles}
I<J \mbox{  if  } x<y  \mbox{  for every  } x\in I  \mbox{  and
every  } y\in J.
\end{equation}

Interval orders were considered in Fishburn \cite{fishburn-book,fishburn} and Wiener \cite{wiener} in relation to the theory of measurement.

%For incomparability graphs of interval orders with no infinite antichains we can obtain even more.

\begin{theorem}\label{thm:intervalorder-isometric}If $P$ is an interval order with no infinite antichains so that $\inc(P)$ is connected and has infinite diameter, then $\inc(P)$ has an infinite isometric path.
\end{theorem}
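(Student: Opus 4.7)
My plan is to represent $P$ as a family $\{I_p = [L(p), R(p)] : p \in P\}$ of closed intervals in a chain $C$ with $p$ incomparable to $q$ iff $I_p \cap I_q \neq \emptyset$, translate the no-infinite-antichain hypothesis into pointwise-finiteness of the representation (by Helly's theorem for intervals on a line, mutually overlapping intervals share a common point, so an infinite antichain would give a point through infinitely many intervals), and then construct an infinite isometric path directly by a ``rightmost sweep.'' Fix $v_0 \in P$ and set $R^{(k)} := \sup\{R(v) : d_{\inc(P)}(v_0, v) \leq k\}$. The key lemma is that $R^{(k)}$ is always achieved: if $w$ lies at distance $k+1$ from $v_0$ with $R(w) > R^{(k)}$, then $w$ is adjacent in $\inc(P)$ to some $u$ at distance $k$, so $L(w) \leq R(u) \leq R^{(k)} < R(w)$, meaning $R^{(k)} \in I_w$; pointwise-finiteness then leaves only finitely many such $w$, so an induction on $k$ shows the supremum is attained.

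The next step is a monotonicity dichotomy: $R^{(k)}$ is either constant (equal to $R(v_0)$ for every $k$) or strictly increasing for every $k \geq 1$. Let $k_1 \geq 1$ be the first index with $R^{(k_1)} > R^{(k_1 - 1)}$ and pick $u_{k_1}$ realizing $R^{(k_1)}$ at distance exactly $k_1$ (any realizer at smaller distance would contradict $R^{(k_1)} > R^{(k_1 - 1)}$). Since $u_{k_1}$ is adjacent to some vertex at distance $k_1 - 1$, $L(u_{k_1}) \leq R^{(k_1 - 1)} = R(v_0)$; combined with $L(v_0) \leq R(v_0) < R(u_{k_1})$, this gives $R(v_0) \in I_{u_{k_1}} \cap I_{v_0}$, so $v_0$ is adjacent to $u_{k_1}$ in $\inc(P)$, forcing $d_{\inc(P)}(v_0, u_{k_1}) \leq 1$ and hence $k_1 = 1$. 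The same overlap argument, comparing the realizer $u_{k_i}$ with its predecessor $u_{k_{i-1}}$, shows inductively that $k_i = i$ for all $i$. I then rule out the constant case by choice of $v_0$: if every $v_0 \in P$ made $R^{(k)}(v_0)$ constant, then connectedness of $\inc(P)$ would force $R(v) \leq R(v_0)$ for all $v, v_0 \in P$, so $R$ would be constant on $P$; every pair of intervals would then share this common right endpoint, making $\inc(P)$ a complete graph on an infinite vertex set (since infinite diameter forces $|P| = \infty$), giving an infinite clique in $\inc(P)$ and so an infinite antichain in $P$---contradicting the hypothesis.

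For a $v_0$ with $R^{(k)}$ strictly increasing, let $u_k$ be any vertex at distance exactly $k$ with $R(u_k) = R^{(k)}$ (so $u_0 = v_0$). The overlap argument above shows $u_{k-1} \sim u_k$ in $\inc(P)$ for every $k \geq 1$, and the distances $d_{\inc(P)}(v_0, u_k) = k$ are pairwise distinct, so $v_0 = u_0, u_1, u_2, \ldots$ is a walk on distinct vertices. For $i < j$, the walk yields $d_{\inc(P)}(u_i, u_j) \leq j - i$, while the triangle inequality gives $d_{\inc(P)}(u_i, u_j) \geq |d(v_0, u_j) - d(v_0, u_i)| = j - i$, so this walk is the desired infinite isometric path. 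The main delicacy I anticipate is in the overlap/pointwise-finiteness bookkeeping; checking that the interval representation over a general chain $C$ behaves as on $\RR$ is standard, since interval orders are always representable over $\QQ$.
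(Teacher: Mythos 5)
Your construction is sound up through the attainment of the suprema $R^{(k)}$ (the pointwise-finiteness argument is correct, and only the easy direction of it is needed), and the final verification that consecutive realizers are adjacent and that the resulting walk is isometric is also correct. The genuine gap is the monotonicity dichotomy: ``constant versus strictly increasing for every $k\geq 1$'' is false. The correct alternative is ``eventually constant versus strictly increasing,'' and the intermediate case occurs under all the hypotheses of the theorem. Take $C=\RR$, $I_0=[0,10]$ and $I_n=[-n,\,-n+1.1]$ for $n\geq 1$: this is an interval order of width $2$ whose incomparability graph is the one-way infinite path $I_0,I_1,I_2,\dots$, hence connected with infinite diameter and no infinite antichain. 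For $v_0=I_5$ the sequence $R^{(k)}$ strictly increases for $k=1,\dots,5$ (reaching $R(I_0)=10$) and is constant thereafter. Your inductive step ``$k_i=i$ for all $i$'' silently assumes that infinitely many increase indices $k_i$ exist, and your rightmost sweep from $I_5$ terminates at $I_0$ after five steps; the infinite isometric path in this example goes to the \emph{left}, which a one-sided sweep never sees. Note also that your concluding reduction only excludes the case where $R^{(k)}(v_0)$ is constant \emph{from $k=0$ on} for every $v_0$, which is strictly weaker than what is needed: in the example above most base points give a non-constant, yet eventually constant, sequence.

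The gap is repairable along your own lines, but only by symmetrizing. Set $L^{(k)}:=\min\{L(v): d_{\inc(P)}(v_0,v)\leq k\}$ and run the mirror argument; the missing lemma is that if both $(R^{(k)})_k$ and $(L^{(k)})_k$ are eventually constant, equal to $M=\max_P R$ and $m=\min_P L$ from step $k_0$ on, then every interval lies in $[m,M]$, so every vertex $w$ either has $I_w$ meeting some $I_v$ with $v\in B_G(v_0,k_0)$, or satisfies $u^-<w<u^+$ for the realizers $u^\pm$ of $m$ and $M$; by the order convexity of balls (Theorem \ref{thm:orderconvex}) the whole poset is then $B_G(v_0,k_0+1)$ and the diameter is finite, a contradiction. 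Hence for each $v_0$ at least one of the two sweeps is strictly monotone at every step, and your path exists. This is precisely the role played by the paper's opening reduction to $F_0=\iinc_P(x_0)\cup\uparrow x_0$, which fixes a direction once and for all; after that reduction the paper proceeds by a different, pigeonhole-type extension argument (the candidate next vertices form a finite antichain by Lemma \ref{lemma:neigbour-antichain}), whereas your greedy sweep through the interval representation, once corrected as above, is a more explicit alternative.
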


The proof of Theorem \ref{thm:intervalorder-isometric} will be provided in Section \ref{section:intervalorders}.

The conclusion of Theorem \ref{thm:intervalorder-isometric} becomes false if the condition "no infinite antichains" is removed. Indeed,

\begin{example}\label{thm:intervalorder-non-isometric} There exists an interval order whose incomparability graph is connected,  has an infinite diameter and no  infinite isometric path.
\end{example}

Example \ref{thm:intervalorder-non-isometric} and a proof that it verifies the required properties will be provided in Section~\ref{section:intervalorders}.

%We will use K\"onig's lemma in the proof of Theorem \ref{thm:infinitepath-kite}. It will lead to the existence of an infinite induced path but not necessarily an isometric path.

\subsection{Convexity and isometry of metric balls in incomparability graphs}
In this subsection we compare the notions of order convexity and metric convexity with respect to the distance on the incomparability  graph of a poset. Before stating our result we need few definitions.

An \emph{initial segment} of a  poset $P:=(V,\leq)$ is any subset $I$ of $V$ such that $x\in V$, $y\in I$ and $x\leq y$ imply $x\in I$. If $X$ is a subset of $V$, the set $\downarrow X:=\{y\in P: y\leq x \; \text{for some}\; x\in X\}$ is the least initial segment containing $X$, we say that it is \emph{generated} by $X$. If $X$ is a one element set, say $X=\{x\}$, we denote by $\downarrow x$, instead of $\downarrow X$, this initial segment and say that it is \emph{principal}. \emph{Final segments} are defined similarly.

Let $P:=(V,\leq)$ be a poset. A subset $X$ of $V$ is \emph{order convex} or \emph{convex} if for all $x,y\in X$, $[x,y]:=\{z : x\leq z\leq y\}\subseteq X$. For instance, initial and final segments of $P$ are convex. Note that any intersection of convex sets is also convex. In particular, the intersection of all convex sets containing $X$, denoted $Conv_P(X)$, is convex. This is the smallest convex set containing $X$. Note that
\[Conv_P(X)=\{z\in P : x\leq z\leq y \mbox{ for some } x,y\in X\}=\downarrow X\cap \uparrow X.\]

Let $G:=(V,E)$ be a graph. We equip it with the graphic distance $d_G$. A \emph{ball} is any subset $B_G(x, r):= \{y\in V: d_G(x,y)\leq r\}$ where $x\in V, r\in \NN$. A subset of $V$ is \emph{convex} w.r.t. the distance $d_G$ if this is an intersection of balls. The \emph{least convex subset} of $G$ containing $X$ is
\[Conv_{G}(X):=\displaystyle \bigcap_{X\subseteq B_G(x,r)}B_G(x,r).\]

Let $X\subseteq V$ and $r\in \NN$. Define
\[B_G(X,r):=\{v\in V : d_G(v,x)\leq r \mbox{ for some } x\in X\}.\]

With all needed definitions in hand we are now ready to state the following theorem.

\begin{theorem} \label{thm:orderconvex}Let $P:=(V,\leq)$ be a poset, $G$ be its incomparability graph,  $X\subseteq V$ and $r\in \NN$.
\begin{enumerate}[$(a)$]
\item If $X$ is an  initial segment, respectively a final segment, respectively an order convex subset of $P$ then $B_G(X,r)$ is an initial segment, respectively a final segment, respectively an order convex subset of $P$. In particular, for all $x\in V$ and $r\in \NN$, $B_G(x,r)$ is order convex;
\item \label{lem:convex-connected}  If $X$ is order convex then the graph induced by $G$ on $B_G(X,r)$ is an isometric subgraph of $G$. In particular, if $X$ is  included into a connected component  of $G$ then the graph induced by $G$ on $B_G(X,r)$ is connected.
\end{enumerate}
\end{theorem}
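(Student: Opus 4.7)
My plan is to prove part (a) directly and then bootstrap to part (b).

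For the initial-segment case of (a), I take $v\in B_G(X,r)$ with $u\leq v$ and fix a shortest $G$-path $v=v_0,v_1,\ldots,v_k=x\in X$ with $k\leq r$. The key observation about incomparability graphs is: if $u\leq v_{j-1}$ and $\{v_{j-1},v_j\}\in E(G)$, then $u$ cannot be strictly above $v_j$, since $v_j<u\leq v_{j-1}$ would force $v_j<v_{j-1}$, contradicting the incomparability of $v_{j-1}$ and $v_j$. So walking along the path, at each step either $u\leq v_j$ or $u$ is incomparable with $v_j$. Either the inequality persists throughout, giving $u\leq x\in X$ (hence $u\in X$ since $X$ is initial), or $u$ first becomes incomparable with some $v_j$, producing a walk $u,v_j,v_{j+1},\ldots,v_k$ of length $k-j+1\leq r$. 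In either case $u\in B_G(X,r)$. The final-segment case is dual. For the convex case I induct on $r$: since $B_G(X,r+1)=B_G(B_G(X,r),1)$, I reduce to showing that $B_G(Y,1)$ is convex whenever $Y$ is. A short case analysis on $u\leq w\leq v$ with $u,v\in B_G(Y,1)$, distinguishing whether each of $u,v$ lies in $Y$ or is merely $G$-adjacent to some element of $Y$, and applying the same comparability observation, yields $w\in Y$ (by convexity of $Y$) or $w$ adjacent to some element of $Y$.

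For part (b), part (a) gives that $B:=B_G(X,r)$ is order convex. Given $u,v\in B$ with $d_G(u,v)=k$, I need a walk of length at most $k$ in the induced subgraph on $B$. I fix a shortest $G$-path $\gamma=(u=y_0,y_1,\ldots,y_k=v)$ and, if $\gamma$ leaves $B$, replace each maximal excursion $y_a,\ldots,y_b$ (with $y_a,y_b\in B$ and $y_{a+1},\ldots,y_{b-1}\notin B$) by a walk of the same length inside $B$. The shortest-path hypothesis forces $y_i$ and $y_{i+2}$ to be comparable in $P$ for every $i$ (else the edge $\{y_i,y_{i+2}\}$ would shorten $\gamma$), imposing a zig-zag structure in $P$ along the excursion. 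Combining this zig-zag structure with the order convexity of $B$ and Gallai's property that $\inc(P)$ admits no induced cycle of length at least $5$, I can locally substitute vertices of $B$ for the vertices of the excursion. The ``In particular'' assertion then follows: if $X$ lies in a single connected component of $G$, each vertex of $B$ is $G$-connected to some element of $X$, and the isometry realizes that connection within $B$.

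The main obstacle will be the rerouting step in part (b): one must exploit the zig-zag structure and the order convexity of $B$ to locate replacement vertices in $B$ and to control the length of the replacement walk. Once that local substitution lemma is in hand, everything else is bookkeeping.
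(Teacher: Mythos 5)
Your part (a) is correct. The one-pass propagation along a shortest path from $v$ into $X$ (at each edge, $u\leq v_{j-1}$ forces $u\leq v_j$ or $u\parallel v_j$, since $v_j<u\leq v_{j-1}$ would contradict $v_{j-1}\parallel v_j$) is sound, and your reduction of the order-convex case to $r=1$ via $B_G(X,r+1)=B_G(B_G(X,r),1)$ plus a case analysis is essentially the content of Lemma \ref{lem:convex-boule} in the paper; your route is slightly more direct but equivalent.

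Part (b) has a genuine gap: the ``local substitution lemma'' you defer is not bookkeeping --- it is the entire content of the statement --- and the tools you propose for it are not the right ones. Two points. First, you have not exploited the available structure: once $d_G(u,v)=k\geq 2$, the endpoints are comparable, say $u<v$, and Lemma \ref{lem:inducedpath} applied to the (automatically induced) shortest path $u=y_0,\dots,y_k=v$ gives $y_0<y_i<y_k$ for all $2\leq i\leq k-2$, so by part (a) all of these interior vertices already lie in $B:=B_G(X,r)$. There are therefore no long excursions to reroute: after reducing to $r=1$ and inducting on $k$ (splitting the path at $y_2$ when $k\geq 4$), the only problematic vertices are $y_1$ and $y_{k-1}$ in the base cases $k=2,3$. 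Second, and more seriously, in those base cases the order convexity of $B$ says nothing about $y_1$ (it is incomparable to both endpoints, hence not order-between any two vertices of $B$ you control), and the exclusion of induced cycles of length at least $5$ does not by itself produce a replacement vertex inside $B$. The paper's argument instead chooses witnesses $x',y'\in X$ with $x'$ incomparable or equal to $u$ and $y'$ incomparable or equal to $v$, and runs a comparability case analysis whose branches either exhibit a path through $x'$ inside $B$ or conclude $x'\leq y_1\leq y'$ and invoke the order convexity of $X$ itself (not merely of $B$) to place $y_1$ in $X$. Without some such use of witnesses taken from $X$, your plan as stated cannot close.
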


It follows from Theorem \ref{thm:orderconvex} that \emph{every ball in an incomparability graph $G$ of a poset is order convex and that the graph induced on it is an  isometric subgraph of $G$}.

The proof of Theorem \ref{thm:orderconvex} is provided in Section \ref{section:proof-thm-orderconvex}.

\subsection{An application of Theorem \ref{thm:infinitepath-kite} in the theory of well quasi order}\label{sec:application}
The purpose of this subsection is to provide an application of Theorem \ref{thm:infinitepath-kite} in the theory of well quasi order. Let us first recall some notions from the Theory of Relations \cite{fraissetr}. A graph $G$ is \emph{embeddable} in a graph $G'$ if $G$ is isomorphic to an induced subgraph of $G'$. The embeddability relation is a quasi order on the class of graphs. A class $\mathcal C$ of  graphs, finite or not, is \emph{hereditary} if it contains every graph which embeds in some member of $\mathcal C$. The \emph{age} of a graph $G$ is the collection of finite graphs, considered up to isomorphy,  that embed in $G$ (or alternatively, that are isomorphic to some induced subgraph of $G$).   We recall that an age of finite graphs, and more generally a class of finite graphs, is \emph{well quasi ordered} (w.q.o. for short) if it contains no infinite antichain, that is an infinite set  of graphs $G_n$ pairwise incomparable with respect to embeddability. There are several results about   w.q.o. hereditary classes of graphs, see for examples \cite{korpelainen-lozin-razgon,korpelainen-lozin}, \cite{lozin-mayhill} and \cite{oudrar}.

We recall that a graph $G:= (V, E)$ is a \emph{permutation graph} if there is a linear order $\leq $ on $V$ and a permutation $\sigma$ of $V$ such that the edges of $G$ are the pairs  $\{x, y\}\in [V]^2$ which are reversed by $\sigma$. The study of permutations graphs became an important topic due to the Stanley-Wilf Conjecture, formulated independently by Richard P. Stanley and Herbert Wilf in the late 1980s, and solved positively by Marcus and   Tard\"os \cite{marcus-tardos} 2004.  It was proved by Lozin and Mayhill 2011\cite{lozin-mayhill} that a hereditary class of finite bipartite permutation graphs is w.q.o. by embeddability if and only there is a bound on the length of the double ended forks (see Figure \ref{fig:doublefork}) it may contain (for an alternative proof see \cite{pouzet-zaguia-wqo20}).   In \cite{pouzet-zaguia-wqo20}, we extend results of Lozin and Mayhill \cite{lozin-mayhill} and present an almost exhaustive list of properties of w.q.o. ages of bipartite permutation graphs. One of our results is a positive answer, in the case of an age of bipartite permutation graphs, to a long standing unsolved question by the first author, of whether the following equivalence is true in general: an age is not w.q.o. if and only if it contains $2^{\aleph_0}$ subages (see subsection I-4 Introduction \`a la  comparaison des \^ages, page 67, \cite{pouzet-israel}). This result, Theorem \ref{thm:3} below, is a consequence of (2) of Theorem \ref{thm:infinitepath-kite}.

%\begin{figure}[h]
%\begin{center}
%\includegraphics[width=4in]{double-fork.pdf}
%\end{center}
%\caption{Double-ended forks: an antichain of finite graphs with respect to embeddability.}
%\label{fig:doublefork}
%\end{figure}

\begin{figure}[h]
\begin{center}
\leavevmode \epsfxsize=3in \epsfbox{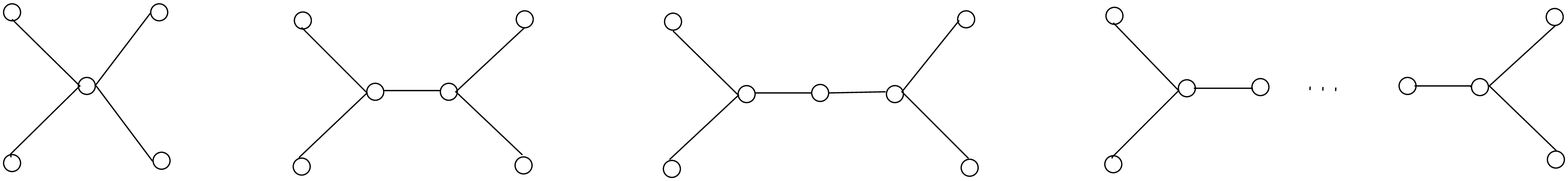}
\end{center}
\caption{Double-ended forks: an antichain of finite graphs with respect to embeddability.}
\label{fig:doublefork}
\end{figure}

\begin{theorem}[\cite{pouzet-zaguia-wqo20}] \label{thm:3}Let $\mathcal{C}$ be an age that consists of finite bipartite permutation graphs. Then $\mathcal{C}$ is not w.q.o. if and only if it contains the age of a direct sum $\bigoplus_{i\in I} \mathrm{DF}_i$ of double ended forks of arbitrarily large length for some infinite subset $I$ of $\NN$. In particular,  if $\mathcal{C}$ is not w.q.o.,  it contains $2^{\aleph_0}$ subages which are not w.q.o.
\end{theorem}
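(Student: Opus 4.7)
The easy direction is immediate: the family $(\mathrm{DF}_n)_n$ is itself an infinite antichain under embeddability (Figure \ref{fig:doublefork}), so if $\mathcal{C}$ contains the age of $\bigoplus_{i\in I}\mathrm{DF}_i$ for some infinite $I\subseteq\NN$ then $\mathcal{C}$ contains this antichain and cannot be w.q.o. My plan for the converse is to reduce it to (2) of Theorem \ref{thm:infinitepath-kite}.

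First I would realise $\mathcal{C}$ as the age of a countable bipartite permutation graph $G$; this is legitimate because the disjoint union of two bipartite permutation graphs is again a bipartite permutation graph (so $\mathcal{C}$ has the joint embedding property and admits such a countable limit). Using the Lozin--Mayhill criterion, the non-w.q.o.\ hypothesis forces $\mathcal{C}$ to contain double-ended forks of unbounded length; amalgamating these and passing to a connected component allows me to arrange $G$ to be connected of infinite diameter with the additional property that the set of its vertices of degree at least $3$ also has infinite diameter. Viewing $G$ as the incomparability graph of a two-dimensional poset, Theorem \ref{thm:infinitepath-kite}(2) then yields an induced comb or kite $H\subseteq G$ with infinite diameter and infinitely many vertices of degree at least $3$. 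Since $G$ is bipartite, kites of types $(1)$ and $(2)$ are excluded (an attached vertex together with two consecutive spine vertices would form a triangle), so $H$ is either a comb or a kite of type $(3)$.

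Next I would extract $\bigoplus_{i\in I}\mathrm{DF}_i$ from $H$ as an induced subgraph for some infinite $I\subseteq\NN$. Both surviving cases present infinitely many local branching configurations along the infinite spine of $H$ (teeth for the comb, four-cycle attachments for the type-$(3)$ kite). Selecting two such configurations together with the spine segment between them, and using spine neighbours as substitute pendants where needed, produces an induced $\mathrm{DF}_n$ whose length is controlled by the gap between the configurations. Iterating this on pairwise disjoint spine intervals of prescribed gap lengths yields $\bigoplus_{i\in I}\mathrm{DF}_i$ inside $H$, hence inside $\mathcal{C}$. The $2^{\aleph_0}$-subages statement then follows because distinct infinite $I\subseteq\NN$, taken modulo finite differences, produce distinct ages of $\bigoplus_{i\in I}\mathrm{DF}_i$, each of which is non-w.q.o.\ by the easy direction already established.

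The step I expect to be the main obstacle is the extraction in the type-$(3)$ kite case: a $\mathrm{DF}_n$ carries two pendants at each end, whereas the type-$(3)$ kite only displays four-cycle attachments to the spine. Verifying that after judicious deletion of certain spine vertices the induced subgraph is exactly $\mathrm{DF}_n$, with no extraneous edges inherited from $G$, requires a careful but finite combinatorial check. The comb case, by contrast, is essentially transparent.
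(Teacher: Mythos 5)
Your overall strategy (Lozin--Mayhill to get forks of unbounded length, then Theorem \ref{thm:infinitepath-kite}(2) to get a comb or a type-$(3)$ kite, then extract $\bigoplus_{i\in I}\mathrm{DF}_i$) is the same as the paper's, and the easy direction and the $2^{\aleph_0}$ count are fine. But the step ``amalgamating these and passing to a connected component allows me to arrange $G$ to be connected of infinite diameter with the set of degree-$\geq 3$ vertices of infinite diameter'' is a genuine gap: it is false in general. Take $\mathcal{C}=\age\bigl(\bigoplus_{n}\mathrm{DF}_n\bigr)$ itself. Since the double-ended forks are pairwise incomparable under embeddability, no connected member of $\mathcal{C}$ contains two forks of different lengths (a connected finite induced subgraph of $\bigoplus_n\mathrm{DF}_n$ lives inside a single component $\mathrm{DF}_n$, and $\mathrm{DF}_m$ does not embed in $\mathrm{DF}_n$ for $m\neq n$). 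Consequently no representative $G$ of this age has a connected component embedding forks of unbounded length, and a fortiori you cannot arrange $G$ to be connected with the degree-$3$ set of infinite diameter. The ``amalgamation'' you invoke is not available: an age has the joint embedding property, but the joint embedding witness need not be connected, and the class $\mathcal{C}$ need not contain any connected graph dominating two incomparable forks.

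The paper repairs exactly this by a dichotomy on a representative $G$ of $\mathcal{C}$: either (1) some single connected component of $G$ embeds double-ended forks of unbounded length, in which case its detour, hence (by Corollary \ref{cor:detour}) its diameter, hence the diameter of its degree-$\geq 3$ vertices is infinite and your argument via Theorem \ref{thm:infinitepath-kite}(2) goes through; or (2) no component does, in which case infinitely many distinct components each embed some fork, of unbounded lengths, and the direct sum $\bigoplus_{i\in I}\mathrm{DF}_i$ sits in $\age(G)$ directly with no further work. Your proof covers only case (1); adding case (2) as a separate (and trivial) branch closes the gap. The extraction of $\bigoplus_{i\in I}\mathrm{DF}_i$ from the comb or the type-$(3)$ kite, which you rightly flag as the fiddly point, is treated as routine in the paper and your sketch of it is consistent with what is intended.
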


%The usefulness of Theorem \ref{thm:infinitepath-kite} has been demonstrated in \cite{pouzet-zaguia-wqo20} where it was used to prove the following theorem. This result is about some classes of bipartite permutations graphs.

A proof is given in \cite{pouzet-zaguia-wqo20}. For completeness we provide the proof here.

\begin{proof}
The set of double-ended forks forms an infinite antichain, hence if $\mathcal C$ contains the direct sum $\bigoplus_{i\in I} \mathrm{DF}_i$ of double ended forks of arbitrarily large length for some infinite subset $I$ of $\NN$, it is not w.q.o. Conversely, suppose $\mathcal C$ is not w.q.o. Then it embeds double-ended forks of unbounded length. This important result is due Lozin and Mayhill (see Theorem 7 in \cite{lozin-mayhill}). Let $G$ be a graph with $\age(G)= \mathcal C$. We consider two  cases:\\
 $(1)$ Some connected component of $G$, say $G_i$, embeds double forks of unbounded length. In this case, the detour of $G_i$, that is the supremum of the lengths of induced paths in $G_i$,  is unbounded. Since $G_i$ is the incomparability graph of a poset of width at most two, its diameter is unbounded  (See Corollary \ref{cor:detour}). In fact, since the vertices of degree $3$ in the forks are end vertices of induced paths, the diameter of the set of vertices of degree $3$ in $G_i$ is unbounded.  Thus from $(2)$ of Theorem \ref{thm:infinitepath-kite},  $G_i$ embeds an induced caterpillar or an induced kite  with infinitely many vertices of degree at least $3$.  Since $G$ is bipartite, it can only embed a kite of type $(3)$. As it is easy to see, this caterpillar or that kite embeds a direct sum $\bigoplus_{i\in I} \mathrm {DF}_i$ of double-ended forks of arbitrarily large length, as required. \\
 $(2)$ If the first case does not hold, there are infinitely many connected components $G_i$, each embedding some double-ended fork $ \mathrm {DF}_i$, and the length of these double-ended forks is unbounded. This completes the proof of Theorem \ref{thm:3}.
\end{proof}

 %It is a long standing unsolved question to know whether the following equivalence is true in general: an age is not w.q.o. if and only if it contains $2^{\aleph_0}$ subages (see subsection I-4 Introduction \`a la  comparaison des \^ages, page 67, \cite{pouzet-israel}). Theorem \ref{thm:3} above gives a positive answer in the case of an age of bipartite permutation graphs.

The paper is organised as follows. In Section \ref{section:prequisite} we present some prerequisites on graphs and posets. In Section \ref{section:fund-lemma} we state a fundamental lemma on paths in incomparability graphs and some consequences. In Section \ref{posetswidth2} we present few metric properties of posets of width $2$. In Section \ref{proof:thm:widthtwo} we present the proof of Theorem \ref{thm:widthtwo}. In Section  \ref{proof:thm:infi-detour-no-path} we present Example \ref{thm:infi-detour-no-path}. In Section \ref{section:convexity} we present various metric properties of incomparability graphs. In Section \ref{section:proof-thm-orderconvex} we present a proof of Theorem \ref{thm:orderconvex} and some consequences. In Section \ref{section:proof-thm:infinitepath-kite} we give a proof of Theorem \ref{thm:infinitepath-kite} (an important ingredient of the proof is Theorem \ref{thm:orderconvex}). In Section \ref{section:proof-thm:noisometric} we present Example \ref{thm:noisometric}. Finally a proof of Theorem \ref{thm:intervalorder-isometric} and Example \ref{thm:intervalorder-non-isometric} are provided in Section \ref{section:intervalorders}.

\section{Graphs and Posets}\label{section:prequisite}

\subsection{Posets}Throughout, $P :=(V, \leq)$ denotes an ordered set (poset).
The \emph{dual} of $P$ denoted $P^{*}$ is the order defined on $V$ as follows: if $x,y\in
V$, then $x\leq y$ in $P^{*}$ if and only if $y\leq x$ in $P$. Let $P :=(V, \leq)$ be a poset. We recall that two elements $x,y\in V$ are \emph{comparable} if $x\leq y$ or $y\leq x$, otherwise,  we say they are \emph{incomparable}, denoted $x\parallel y$.  A set of pairwise comparable elements is called a \emph{chain}. On the other hand, a set of pairwise incomparable elements is called an \emph{antichain}. The \emph{width} of a poset is the maximum cardinality  of its antichains (if the maximum does not exist, the width is set to be infinite). Dilworth's celebrated theorem on finite posets \cite{dilworth} states that the maximum cardinality of an antichain in a finite poset equals the minimum number of chains needed to cover the poset. This result remains true even if the poset is infinite but has finite width. If  the poset $P$ has width $2$ and the incomparability graph of $P$ is connected, the partition of $P$ into two chains is unique (picking  any vertex $x$,  observe that the set of  vertices at odd distance from $x$ and the set of vertices at even distance from $x$ form a partition into two chains). According to  Szpilrajn \cite{szp},  every order on  a set  has a linear extension. Let $P:=(V,\leq)$ be a poset.  A \emph{realizer} of $P$ is a family $\mathcal{L}$ of linear extensions of the order of $P$ whose intersection is the order of $P$. Observe that the set of all linear extensions of $P$ is a realizer of $P$. The \emph{dimension} of $P$, denoted $dim(P)$, is the least cardinal $d$ for which there exists a realizer of cardinality $d$ \cite{dushnik-miller}. It follows from the  Compactness Theorem of First Order Logic that an order is intersection of  at most $n$ linear orders ($n\in \NN$) if and only if every finite restriction of the order has this property. Hence the class of posets with dimension at most $n$ is determined by a set of finite obstructions, each   obstruction is a poset $Q$  of dimension $n+1$  such that  the deletion of any element of $Q$ leaves a poset of dimension $n$; such a poset is said \emph{critical}.  For $n\geq 2$ there are infinitely many critical posets of dimension $n+1$. For $n=2$ they have been  described by Kelly\cite{kelly77}; beyond, the task is considered as hopeless.

\subsubsection{Comparability and incomparability graphs, permutation graph}  A graph $G:= (V, E)$ is a \emph{comparability graph} if the edge set is the set of comparabilities of some order on $V$. From the Compactness Theorem of First Order Logic, it follows that a graph is a comparability graph if and only if every finite induced subgraph is a comparability graph. Hence, the class of comparability graphs is determined by a set of finite obstructions. The complete list of minimal obstructions was determined by Gallai \cite{gallai}. A graph $G:= (V, E)$ is a \emph{permutation graph} if there is a linear order $\leq $ on $V$ and a permutation $\sigma$ of $V$ such that the edges of $G$ are the pairs  $\{x, y\}\in [V]^2$ which are reversed by $\sigma$. Denoting by $\leq_{\sigma}$ the set of oriented pairs $(x, y)$ such that $\sigma(x) \leq \sigma (y)$, the graph is the comparability graph of the poset whose order is the intersection of $\leq$ and the opposite of $\leq_{\sigma}$.  Hence, a permutation graph is the comparability graph of  an order intersection of two linear orders, that is the comparability graph of an order of dimension at most two \cite{dushnik-miller}. If the graph is finite, the converse holds. Hence, as it is well known, a finite graph $G$ is a permutation graph if and only if $G$ and $G^c$ are comparability graphs \cite{dushnik-miller}; in particular, a finite graph  is a permutation graph if and only if its complement is a permutation graph.  Via the Compactness Theorem of First Order Logic, an infinite graph is the comparability graph  of  a poset intersection of two linear orders if an only if each finite induced graph is a permutation graph (sometimes these graphs are called permutation graphs, while there is no possible permutation involved). For more about permutation graphs, see \cite{klazar}.

%The \emph{comparability graph}, respectively the \emph{incomparability graph}, of a poset $P:=(V,\leq)$ is the undirected graph, denoted by $\comp(P)$, respectively $\inc(P)$, with vertex set $V$ and edges the pairs $\{u,v\}$ of comparable distinct vertices (that is, either $u< v$ or $v<u$) respectively incomparable vertices.

\subsubsection{Lexicographical sum}Let $I$ be a poset such that $|I|\geq 2$ and let $\{P_{i}:=(V_i,\leq_i)\}_{i\in I}$ be a family of pairwise disjoint nonempty posets
that are all disjoint from $I$. The \emph{lexicographical sum} $\displaystyle \sum_{i\in I} P_{i}$ is the poset defined on
$\displaystyle \bigcup_{i\in I} V_{i}$ by $x\leq y$ if and only if
\begin{enumerate}[(a)]
\item There exists $i\in I$ such that $x,y\in V_{i}$ and $x\leq_i y$ in $P_{i}$; or
\item There are distinct elements $i,j\in I$ such that $i<j$ in $I$,   $x\in V_{i}$ and $y\in V_{j}$.
\end{enumerate}

The posets $P_{i}$ are called the \emph{components} of the lexicographical sum and the poset $I$ is the \emph{index set}.
If $I$ is a totally ordered set, then $\displaystyle \sum_{i\in I} P_{i}$ is called a \emph{linear sum}. On the other hand, if $I$ is
an antichain, then $\displaystyle \sum_{i\in I} P_{i}$ is called a \emph{direct sum}. Henceforth we will use the symbol $\oplus$ to indicate direct sum.

The decomposition of the incomparability graph of a poset into connected components is expressed in the following lemma which belongs
to the folklore of the theory of ordered sets.

\begin{lemma}\label{lem:folklore} If $P:= (V, \leq)$ is a poset, the order on $P$ induces a total order on the set $Connect(P)$
of connected components of $\inc(P)$, the incomparability graph of $P$, and $P$ is the lexicographical sum of these components indexed by the chain $Connect(P)$. In
particular, if $\preceq$ is a total order extending the order $\leq$ of $P$, each connected component $A$ of $\inc(P)$ is an interval of
the chain $(V, \preceq)$.
\end{lemma}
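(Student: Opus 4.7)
The plan is to first establish the fundamental observation: two vertices lying in distinct connected components of $\inc(P)$ must be comparable in $P$ (otherwise they would be adjacent in $\inc(P)$, hence in the same component). The task then reduces to showing that this comparability is \emph{uniform across components}: if $A$ and $B$ are two distinct components of $\inc(P)$ and some $a_0 \in A$, $b_0 \in B$ satisfy $a_0 < b_0$, then every element of $A$ lies below every element of $B$. This will yield a well-defined order on $\mathrm{Connect}(P)$, which is easily seen to be a chain.

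The core step is an induction along paths in $\inc(P)$. Assuming $a_0 < b_0$, take any $a \in A$ and a path $a_0 a_1 \dots a_n = a$ in $\inc(P)$; I want to show $a_i < b_0$ for every $i$. Supposing inductively that $a_i < b_0$, the pair $\{a_{i+1}, b_0\}$ lies in distinct components, so $a_{i+1}$ and $b_0$ are comparable; if $b_0 < a_{i+1}$, then transitivity gives $a_i < a_{i+1}$, contradicting the fact that $a_i$ and $a_{i+1}$ are adjacent in $\inc(P)$, hence incomparable in $P$. Therefore $a_{i+1} < b_0$, and in particular $a < b_0$. The same argument applied on the $B$-side (fix $a \in A$ and propagate along a path in $B$) yields $a < b$ for every $b \in B$, proving uniformity.

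Writing $A < B$ to mean $a < b$ for all $a \in A$, $b \in B$, uniformity and the comparability step give a total order on $\mathrm{Connect}(P)$; transitivity follows because any component is nonempty (insert an intermediate element), and antisymmetry and totality are immediate. By construction the order of $P$ coincides with the lexicographical sum $\sum_{A \in \mathrm{Connect}(P)} A$ indexed by this chain: comparabilities inside a component are inherited from $P$, and comparabilities between two components are dictated by the order of the components.

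For the final clause, let $\preceq$ be any linear extension of $\leq$ and suppose $a_1 \prec v \prec a_2$ with $a_1, a_2 \in A$ and $v \in V$. If $v$ belonged to a component $B \ne A$, then either $A < B$ (forcing $a_2 < v$, hence $a_2 \prec v$, contradicting $v \prec a_2$) or $B < A$ (forcing $v < a_1$, hence $v \prec a_1$, contradicting $a_1 \prec v$); so $v \in A$, showing that $A$ is an interval of $(V,\preceq)$. The main obstacle is the inductive step in the second paragraph; everything else is essentially bookkeeping once that propagation lemma is in hand.
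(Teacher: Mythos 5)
Your proof is correct: the two-step propagation along paths in $\inc(P)$ (first pushing $a_0<b_0$ through a path in $A$, then through a path in $B$) is exactly the standard argument for this folklore fact, and the derivation of the lexicographical-sum structure and the interval property from the resulting total order on $Connect(P)$ is sound. The paper itself states this lemma without proof, labelling it folklore, so there is no authorial argument to compare against; your write-up supplies precisely the argument one would expect.
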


The next two sections introduce the necessary ingredients to the proof of Theorem \ref{thm:widthtwo}.

%\section{Posets of width at most $2$ and bipartite permutation graphs}\label{posetswidth2}

\section{A fundamental lemma}\label{section:fund-lemma}

We state an improvement  of I.2.2 Lemme, p.5 of \cite{pouzet78}.
\begin{lemma}\label{lem:inducedpath} Let $x,y$ be two vertices of a poset $P$ with $x<y$.  If $x_0, \dots, x_n$ is an induced path in the incomparability graph of $P$ from $x$ to $y$ then   $x_i< x_j$ for all $j-i\geq 2$.
\end{lemma}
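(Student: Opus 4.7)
The plan is to proceed by induction on the length $n$ of the path. The base case $n=2$ is immediate: the only pair $(i,j)$ with $j-i\geq 2$ is $(0,2)$, and by hypothesis $x_0=x<y=x_2$.

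For the inductive step, suppose $n\geq 3$ and the lemma holds for all induced paths of shorter length. The first step will be to show that $x_0<x_{n-1}$. Since $n-1\geq 2$ and the path is induced, $x_0$ and $x_{n-1}$ are non-adjacent in $\inc(P)$, hence comparable in $P$. If one had $x_{n-1}<x_0$, then transitivity together with $x_0<x_n$ would give $x_{n-1}<x_n$, contradicting the fact that $\{x_{n-1},x_n\}$ is an edge of $\inc(P)$. Hence $x_0<x_{n-1}$. A symmetric argument, exploiting instead the incomparability of $x_0$ and $x_1$, will yield $x_1<x_n$: if $x_n<x_1$ then $x_0<x_n<x_1$ would force $x_0<x_1$, contradicting $\{x_0,x_1\}\in E(\inc(P))$.

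With these two inequalities in hand, I would apply the induction hypothesis twice. Applied to the induced subpath $x_0,x_1,\dots,x_{n-1}$ of length $n-1$ (using $x_0<x_{n-1}$), it yields $x_i<x_j$ for all $0\leq i<j\leq n-1$ with $j-i\geq 2$. Applied to the induced subpath $x_1,x_2,\dots,x_n$ of length $n-1$ (using $x_1<x_n$), it yields $x_i<x_j$ for all $1\leq i<j\leq n$ with $j-i\geq 2$. Combined with the hypothesis $x_0<x_n$, these two conclusions cover every pair $(i,j)$ with $0\leq i<j\leq n$ and $j-i\geq 2$, completing the induction.

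I do not expect a serious obstacle: the whole argument rests on the simple observation that a comparability oriented the wrong way between two ``far apart'' vertices would, by transitivity with an adjacent edge, violate the incomparability required on a consecutive pair. The only delicate point is the bookkeeping that ensures the two subpath inductions together with the given inequality $x_0<x_n$ exhaust all required pairs, and that the pair $(0,n)$ itself is precisely the one supplied by hypothesis.
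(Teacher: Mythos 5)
Your proof is correct and follows essentially the same route as the paper's: induction on $n$, establishing $x_0<x_{n-1}$ and $x_1<x_n$ by the same transitivity-versus-incomparability contradiction, and then applying the induction hypothesis to the two subpaths obtained by deleting an endpoint. The bookkeeping at the end (the two subpath conclusions plus the hypothesis $x_0<x_n$ covering all pairs) is exactly what the paper's ``the stated result follows'' abbreviates.
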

\begin{proof}
Induction on $n$. If $n\leq 2$ the property holds trivially. Suppose $n\geq 3$. Taking  out $x_0$, induction applies to $x_1, \dots x_n$. Similarly, taking out $x_n$, induction applies to $x_0, \dots x_{n-1}$. Since the path from $x_0$ to $x_n$ is induced, $x_0$ is comparable to every $x_j$ with $j\geq 2$ and $x_n$ is comparable to every $x_j$ with $j<n-1$. In particular, since $n\geq 3$, $x_0$ is comparable to $x_{n-1}$. Necessarily, $x_0< x_{n-1}$. Otherwise, $x_{n-1}<x_0$  and then by transitivity $x_{n-1}<x_{n} $ which is impossible since $\{x_{n-1}, x_{n}\} $  is an edge of the incomparability graph. Thus, we may apply induction to the path  from $x_0, \dots,  x_{n-1}$ and  get $x_0<x_j$ for every $j>2$. Similarly, we get $x_1<x_n$ and via the induction applied to the path from $x_1$ to $x_n$, $x_j< x_n$ for $j<n-1$. The stated result follows.
\end{proof}

An immediate corollary is this.

\begin{corollary}\label{cor:cover-distance}Let $P$ be a poset such that $\inc(P)$ is connected and let $a<b$. If $(a,b)$ is a covering relation in $P$, then $2\leq d_{\inc(P)}(a,b)\leq 3$.
\end{corollary}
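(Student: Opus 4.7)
The plan is to prove each inequality separately, with the upper bound being the substantive direction that uses Lemma \ref{lem:inducedpath}.

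For the lower bound $d_{\inc(P)}(a,b) \geq 2$, I would simply note that $a<b$ means $a$ and $b$ are comparable, so $\{a,b\}$ is not an edge of $\inc(P)$. Since $\inc(P)$ is connected and $a \neq b$, there is a finite path between them, giving $d_{\inc(P)}(a,b) \geq 2$.

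For the upper bound $d_{\inc(P)}(a,b) \leq 3$, I would argue by contradiction. Suppose $d_{\inc(P)}(a,b) = n \geq 4$, and let $a=x_0, x_1, \dots, x_n=b$ be a shortest path in $\inc(P)$. Any shortest path in a graph is induced (otherwise a chord would yield a shorter path), so Lemma \ref{lem:inducedpath} applies. Taking the indices $0$ and $2$ we get $x_0 < x_2$, and taking $2$ and $n$ (with $n-2 \geq 2$) we get $x_2 < x_n$. Since $x_2$ is an interior vertex of a simple path with $n \geq 4$, $x_2 \notin \{x_0, x_n\} = \{a,b\}$. Thus $a < x_2 < b$ with $x_2 \neq a,b$, contradicting the assumption that $(a,b)$ is a covering relation.

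The proof is almost immediate given Lemma \ref{lem:inducedpath}, so there is no real obstacle; the only subtlety worth stating explicitly is the step that a shortest path is automatically induced, which is what allows Lemma \ref{lem:inducedpath} to be invoked directly on a geodesic $a\to b$. One could also briefly remark that the bound $3$ is sharp, as witnessed by the standard $N$-shaped poset on four elements, where a covering pair has distance exactly $3$ in the incomparability graph.
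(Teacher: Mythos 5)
Your proof is correct and is exactly the argument the paper intends: the paper presents this corollary as immediate from Lemma \ref{lem:inducedpath}, and your write-up supplies precisely the expected details (a geodesic is induced, so the lemma forces $a<x_2<b$ when the distance is at least $4$, contradicting the covering relation). The remark on sharpness is also accurate, witnessed by the covering pair at the ends of the $N$-shaped poset.
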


Another consequence of Lemma \ref{lem:inducedpath} is that incomparability graphs have no induced cycles of length at least five \cite{gallai}. Indeed, let $P$ be a poset and let $x_0,\dots,x_l, x_0$ be an induced cycle of $\inc(P)$. Suppose for a contradiction that $l\geq 4$. We will apply Lemma \ref{lem:inducedpath} successively to the induced paths $x_0,\dots, x_{l-1}$ and $x_1,\dots, x_{l}$ and will derive a contradiction. We may assume without loss of generality that $x_0<x_{l-1}$. It follows from Lemma \ref{lem:inducedpath} applied to $x=x_0$ and $y=x_{l-1}$ that $x_0<x_{l-2}$ (recall that $l\geq 4$) and $x_1<x_{l-1}$. We now consider the induced path $x_1,\dots, x_{l}$. Then $x_1$ and $x_l$ are comparable. It follows from $x_1<x_{l-1}$ and Lemma \ref{lem:inducedpath} applied to $x=x_1$ and $y=x_{l}$ that $x_1<x_l$.  Hence, $x_{l-2}<x_l$. By transitivity we get $x_0<x_l$ which is impossible.

Here is yet another consequence of Lemma \ref{lem:inducedpath}.

\begin{proposition}Let $P:=(V,\leq)$ be a poset. A sequence $a_0,...,a_n,...$ of vertices of $V$ forms an induced path in $\inc(P)$ originating at $a_0$ if and only if for all $i\in \NN$, $a_i,a_{i+1},a_{i+2},a_{i+3}$ is an induced path of $\inc(P)$ with extremities $a_i,a_{i+3}$.
\end{proposition}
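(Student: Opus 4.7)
The forward direction is immediate: any four consecutive vertices of an induced path of $\inc(P)$ form an induced $\mathrm P_4$ whose extremities are the first and last of the four, so the work lies entirely in the converse.

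Assume that for every $i\in \NN$, $(a_i,a_{i+1},a_{i+2},a_{i+3})$ is an induced $\mathrm P_4$ in $\inc(P)$ with extremities $a_i$ and $a_{i+3}$. The pairs $\{a_i,a_{i+1}\}$ are edges of $\inc(P)$ since each appears as an edge of such a window. I therefore plan to show that $a_i$ and $a_j$ are comparable in $P$ whenever $|i-j|\geq 2$, extracting this from Lemma \ref{lem:inducedpath} applied window by window. Fix $i$ and denote the window by $W_i$. Because the extremities $a_i,a_{i+3}$ are joined by an induced path in $\inc(P)$ without being adjacent in $\inc(P)$, they must be comparable in $P$. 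Set $\epsilon_i=+$ if $a_i<a_{i+3}$ and $\epsilon_i=-$ otherwise. Lemma \ref{lem:inducedpath} then yields, in the $+$ case, $a_i<a_{i+2}$ and $a_{i+1}<a_{i+3}$, with the reverse inequalities in the $-$ case.

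The pair $\{a_{i+1},a_{i+3}\}$ occurs with direction $\epsilon_i$ in $W_i$ and with direction $\epsilon_{i+1}$ in $W_{i+1}$, so $\epsilon_i=\epsilon_{i+1}$; hence all $\epsilon_i$ coincide. Up to reversing the order on $P$, I may assume $\epsilon_i=+$ for every $i$, so that
\[
a_i<a_{i+2} \quad \text{and}\quad a_i<a_{i+3} \qquad \text{for all } i\in\NN.
\]
I then prove $a_i<a_j$ for all $j\geq i+2$ by induction on $j-i$: the cases $j-i\in\{2,3\}$ are the displayed inequalities, and for $j-i\geq 4$ the inductive hypothesis applied to $(a_{i+2},a_j)$ gives $a_{i+2}<a_j$, which combined with $a_i<a_{i+2}$ yields $a_i<a_j$ by transitivity. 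In particular $\{a_i,a_j\}\notin E(\inc(P))$ whenever $|i-j|\geq 2$, which together with the edges $\{a_i,a_{i+1}\}$ already identified shows that $(a_n)_{n\in\NN}$ is an induced path of $\inc(P)$ originating at $a_0$.

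The only delicate point I anticipate is the direction-consistency of the $\epsilon_i$; happily it is forced automatically by the overlap $W_i\cap W_{i+1}=\{a_{i+1},a_{i+2},a_{i+3}\}$, so no further structural argument on $P$ is needed. Everything else is an application of Lemma \ref{lem:inducedpath} and transitivity.
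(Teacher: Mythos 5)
Your proof is correct. It differs from the paper's in its organization, though both rest on the same key ingredient, Lemma \ref{lem:inducedpath}. The paper argues by induction on the length of the prefix: assuming $a_0,\dots,a_n$ is already an induced path with (say) $a_0<a_n$, it applies Lemma \ref{lem:inducedpath} to that entire prefix to get $a_i<a_j$ for $j-i\geq 2$, then uses the single window $a_{n-2},a_{n-1},a_n,a_{n+1}$ together with transitivity to attach $a_{n+1}$. You instead never apply the lemma to anything longer than a four-vertex window: you first observe that the overlap of consecutive windows forces all the orientations $\epsilon_i$ to agree, and then propagate the comparabilities $a_i<a_j$ ($j-i\geq 2$) by a direct induction on $j-i$ using only $a_i<a_{i+2}$ and transitivity. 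This local-to-global route is slightly more economical and has the merit of making explicit the orientation-consistency point that the paper's ``without loss of generality $a_0<a_n$,'' invoked inside the induction step, leaves implicit; the paper's version, on the other hand, directly exhibits the growing induced path at each stage. Both arguments are complete and of comparable length.
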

\begin{proof}$\Rightarrow$ Obvious. \\
$\Leftarrow$ Suppose that for all $i\in \NN$, $a_i,a_{i+1},a_{i+2},a_{i+3}$ is an induced path with extremities $a_i,a_{i+3}$. We prove by induction that for all $n\in \NN$, $a_0,...,a_n$ is an induced path in $G$. Suppose $a_0,...,a_n$ is an induced path in $G$ and assume without loss of generality that $a_0<a_n$. Then $a_i<a_{n}$ for all $i\leq n-2$ (follows from Lemma \ref{lem:inducedpath}). From $a_{n-2},a_{n-1},a_{n},a_{n+1}$ is an induced path with extremities $a_{n-2},a_{n+1}$ and $a_{n-2}<a_{n}$ we deduce that $a_{n-2}<a_{n+1}$ and $a_{n-1}<a_{n+1}$. Therefore, $a_i<a_{n+1}$ for all $i\leq n-1$ proving that $a_0,...,a_n,a_{n+1}$ is an induced path in $G$.
\end{proof}

We should mention that the value $3$ is the previous proposition is best possible. Indeed, if $P$ the direct sum of two copies of the chain of natural numbers, then $\inc(P)$ is a complete bipartite graph and every path on $3$ vertices is an induced path. Yet an infinite sequence of vertices that alternates between the copies of $\NN$ does not constitute an infinite induced path of $\inc(P)$.

\section{Posets of width $2$ and their distances}\label{posetswidth2}

\subsection{Posets of width $2$ and bipartite permutation graphs}
In this subsection we recall some properties about posets of width at most $2$ and permutation graphs. We start with a characterization of bipartite permutation graphs, next we give some properties of the graphic distance and the detour in comparability graphs of posets of width at most $2$. We recall the existence of a universal poset of width at most $2$ \cite{pouzet78}. We describe the incomparability graph of a variant of this poset more appropriate for our purpose.

%\begin{figure}[h!]
%\begin{center}
%\includegraphics[width=4in]{CriticalPosetsDim3.pdf}
%\end{center}
%\caption{Critical posets of dimension $3$ and height $2$} \label{fig:critique3}
%\end{figure}

\begin{figure}[h!]
\begin{center}
\leavevmode \epsfxsize=2in \epsfbox{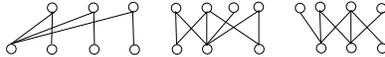}
\end{center}
\caption{Critical posets of dimension $3$ and height $2$.}
\label{fig:critique3}
\end{figure}

We note that a poset $P$ of width at most $2$ has dimension at most $2$, hence its comparability graph is an  incomparability graph. As previously mentioned, a finite graph $G$ is a comparability and incomparability graph if and only if it is a permutation graph. Incomparability graphs of finite posets of width $2$ coincide with bipartite permutation graphs. For arbitrary posets,  the characterization  is  as follows.

\begin{lemma}\label{lem:w2}Let $G$ be a graph. The following are equivalent.
\begin{enumerate}[(i)]
  \item $G$ is  bipartite and is the comparability graph of a poset of dimension at most two;
  \item $G$ is bipartite and embeds  no even cycles of length at least six and none  of the comparability graphs of the posets depicted in Figure $\ref{fig:critique3}$.
  \item $G$ is the incomparability graph of a poset of width at most $2$.
  \item $G$ is a bipartite incomparability graph.
\end{enumerate}
\end{lemma}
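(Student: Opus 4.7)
The plan is to establish the chain $(\mathrm{iv})\Leftrightarrow(\mathrm{iii})\Leftrightarrow(\mathrm{i})\Leftrightarrow(\mathrm{ii})$, using Dilworth's theorem for $(\mathrm{iii})\Leftrightarrow(\mathrm{iv})$, the Dushnik--Miller theorem together with the Compactness Theorem of First Order Logic for $(\mathrm{iii})\Leftrightarrow(\mathrm{i})$, and Kelly's 1977 classification of critical posets of dimension $3$ for $(\mathrm{i})\Leftrightarrow(\mathrm{ii})$.

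The equivalence $(\mathrm{iii})\Leftrightarrow(\mathrm{iv})$ is immediate from the definitions: by Dilworth a width-$\leq 2$ poset $P$ decomposes into two chains $C_1,C_2$, and since no edge of $\inc(P)$ lies inside a chain, $(C_1,C_2)$ is a bipartition of $\inc(P)$; conversely, the two color classes of a bipartite incomparability graph contain no incomparability edge, hence are chains and certify width $\leq 2$. For $(\mathrm{iii})\Leftrightarrow(\mathrm{i})$ I would argue as follows. From (iii), $G^{c}=\comp(P)$ is a comparability graph, and $G$ itself is bipartite, hence admits the transitive orientation that directs every edge from $C_1$ to $C_2$. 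Thus every finite induced subgraph of $G$ is simultaneously a comparability graph and the complement of one---equivalently, a permutation graph---which by Dushnik--Miller is the same as being the comparability graph of a finite poset of dimension $\leq 2$; compactness, as used in Section \ref{section:prequisite}, lifts this property to $G$. The converse runs symmetrically: from (i) each finite induced subgraph of $G$ is a permutation graph, so $G^{c}$ is itself a comparability graph, of some poset $Q$, and the color classes of the bipartition of $G$ become cliques of $G^{c}$, i.e., chains of $Q$, forcing $Q$ to have width $\leq 2$ and $G=\inc(Q)$.

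The main step, and where I expect the principal obstacle, is $(\mathrm{i})\Leftrightarrow(\mathrm{ii})$. Because every bipartite graph is trivially a comparability graph (orient all edges from one color class to the other), the substance of (i) reduces to the dimension bound on the resulting height-$\leq 2$ poset. By Kelly's classification of critical posets of dimension $3$, restricted to those of height $2$, the list splits into the infinite family of crowns---whose comparability graphs are precisely the induced even cycles $C_{2n}$ for $n\geq 3$---together with the finite sporadic family drawn in Figure \ref{fig:critique3}. A finite bipartite comparability graph has dimension $\leq 2$ iff it avoids every member of this list as an induced subgraph, and the Compactness Theorem of First Order Logic then transfers the characterization to arbitrary $G$. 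The delicate point is the correct identification of Kelly's list with the two forbidden families in (ii): verifying that the clause ``no induced even cycle of length $\geq 6$'' captures exactly the crown family, while Figure \ref{fig:critique3} exhausts all remaining height-$2$ critical obstructions to dimension two.
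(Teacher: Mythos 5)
Your proposal is correct and follows essentially the same route as the paper: the peripheral equivalences via Dilworth, Dushnik--Miller and compactness, and the substantive equivalence $(i)\Leftrightarrow(ii)$ by reducing to the known finite classification of the critical structures for dimension $3$ and then lifting with the Compactness Theorem. The only difference is one of packaging: where you reassemble the finite case of $(i)\Leftrightarrow(ii)$ from Kelly's list of $3$-irreducible posets (and rightly flag the identification of its height-$2$ members with the long even cycles plus Figure~\ref{fig:critique3} as the delicate point), the paper simply cites Theorem~1 of Trotter--Moore, which packages exactly that verification.
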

\begin{proof}
$(i)\Leftrightarrow (ii)$. If $G$ is finite, this is Theorem 1 of \cite{moore-trotter}. Hence,  the equivalence between $(i)$ and $(ii)$ holds for the   restrictions of $G$ to every finite set $F$ of vertices. This gives immediately the implication $(i)\Rightarrow (ii)$. For the converse implication, we get  that every finite induced subgraph of $G$  is  bipartite and the comparability graph of a poset of dimension at most two.  The Compactness Theorem of First Order Logic implies  that these properties extend to $G$.\\
$(iii)\Rightarrow (i)$. Suppose $G$ is the incomparability graph of a poset of width at most $2$. Then $G$ has no 3-element cycles. Also, $G$ has no induced odd cycles of length at least five (see \cite{gallai}, Section 3.8, Table 5). This shows that $G$ is bipartite. Since $P$ is coverable by two chains it has order dimension two (the dimension of a poset is at most its width \cite{dilworth}) and therefore its incomparability graph is also a comparability graph \cite{dushnik-miller}. Thus $G$ is a comparability graph of a poset of dimension at most two.\\
$(i)\Leftrightarrow (iv)$. Follows from the fact  that a graph $G$ is the incomparability graph of a poset of dimension at most $2$ if and only if this is  the  comparability graph of a poset of dimension at most $2$ \cite{dushnik-miller}.
$(iii) \Leftrightarrow (iv)$. Implication $(iv) \Rightarrow (iii)$ is trivial. For the converse, suppose that $G$ is a bipartite incomparability  graph of a poset $P$, apply Dilworth's theorem\cite{dilworth} or pick   any vertex $x$,  and observe that the set of  vertices at odd distance from $x$ and the set of vertices at even distance from $x$ form a partition of $P$ into two chains, hence $G$ is bipartite.
\end{proof}

We should mention the following result (this is essentially Lemma 14 from \cite{zaguia2008}) which states that a bipartite permutation graph without cycles must embed a caterpillar. A key observation is that if a vertex has at least three neighboring vertices in $\inc(P)$, then at least one has degree one. Otherwise, $\inc(P)$ would have a spider (see Figure \ref{fig:critique3}) as an induced subgraph,  which is impossible.

\begin{lemma}\label{nospider}Let $P$ be a poset of coverable by two chains. Then the following properties
are equivalent.
\begin{enumerate}[(i)]
%\item $P$ is an interval order of width at most two.
\item The incomparability graph of $P$ has no cycles of length three or four.
\item The incomparability graph of $P$ has no cycle.
\item The connected components of the incomparability graph of $P$ are caterpillars.
\end{enumerate}
\end{lemma}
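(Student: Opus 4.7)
The implications $(iii) \Rightarrow (ii) \Rightarrow (i)$ are free, since caterpillars are trees (hence acyclic) and an acyclic graph has no cycles of length three or four. My plan is to devote the work to the converse $(i) \Rightarrow (iii)$, which I would split into showing first that $\inc(P)$ is a forest and then that every tree-component of $\inc(P)$ is a caterpillar.

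For the first step I would combine hypothesis (i) with what is already on the table. By Lemma~\ref{lem:w2}, $\inc(P)$ is bipartite, hence automatically triangle-free, and hypothesis (i) reduces to the exclusion of $4$-cycles. Any non-induced $4$-cycle carries a chord and therefore contains a triangle, so the exclusion of $4$-cycles is in fact equivalent to the exclusion of induced $4$-cycles. Gallai's theorem, recalled just after Lemma~\ref{lem:inducedpath}, rules out induced cycles of length $\geq 5$ in every incomparability graph, so $\inc(P)$ has no induced cycle of any length. Since a shortest cycle in any graph must be chordless, this forces $\inc(P)$ to be acyclic.

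For the second step, fix a connected component $T$ of $\inc(P)$ — a tree by the first step — and suppose for contradiction that $T$ is not a caterpillar. Then there is a vertex $v$ in $T$ with at least three non-leaf neighbours $a, b, c$, each having a further tree-neighbour $a', b', c' \neq v$; acyclicity of $T$ forces these seven vertices to be distinct and the induced subgraph on them to be the \emph{spider} $S$, i.e.\ the tree obtained by subdividing each edge of $K_{1,3}$ once. The main obstacle is to rule out $S$ as an induced subgraph of $\inc(P)$. The cleanest closure is to invoke Lemma~\ref{lem:w2}: $\inc(P)$ is a bipartite permutation graph, so every induced subgraph is too, and $S$ appears in the forbidden list of Lemma~\ref{lem:w2}(ii) as the comparability graph of the spider critical poset of dimension three depicted in Figure~\ref{fig:critique3}. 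If one prefers an argument that bypasses Figure~\ref{fig:critique3}, the bipartition of $S$ forces $\{a,b,c\}$ into one of the two chains covering $P$, say $C_2$ with $a < b < c$, and $\{v, a', b', c'\}$ into $C_1$; the required relations $b' \parallel b$ and $b'$ comparable to both $a$ and $c$ then force $a < b' < c$ in $P$, after which the comparability of $v$ and $b'$ in the chain $C_1$ yields either $v < b'$ (so $v < c$) or $v > b'$ (so $v > a$), each contradicting $v \parallel a, b, c$. Either route produces the contradiction that closes $(i) \Rightarrow (iii)$.
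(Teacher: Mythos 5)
Your proof is correct, and its first route is essentially the paper's own argument: the paper only sketches this lemma (citing Lemma 14 of \cite{zaguia2008}), with the key observation being precisely that the spider of Figure \ref{fig:critique3} cannot occur as an induced subgraph of a bipartite permutation graph. Your reduction of acyclicity to Gallai's exclusion of induced cycles of length at least five plus hypothesis $(i)$ (via the fact that a shortest cycle is chordless), and your extraction of an induced spider from a non-caterpillar tree component, are both sound and fill in details the paper leaves implicit. Your second route for excluding the spider --- placing $\{a,b,c\}$ and $\{v,a',b',c'\}$ into the two covering chains, deducing $a<b'<c$ from the comparabilities of $b'$ with $a$ and $c$, and then contradicting $v\parallel a$ or $v\parallel c$ --- is a correct, self-contained alternative that avoids appealing to the classification behind Lemma \ref{lem:w2}$(ii)$, which is a small but genuine gain in elementarity.
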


\subsection{Detour of bipartite permutation graphs}\label{subsection:detour-perm-bip}

We are going to evaluate the detour of connected components of the incomparability graph of a poset of width at most $2$.

Let $P:=(V,\leq)$ be a poset of width $2$. Suppose that $\inc( P)$ is connected. In this case,  the partition of $P$ into two chains is unique. An \emph{alternating sequence}   in $P$ is any finite monotonic sequence $(x_0, \dots, x_i, \dots x_n)$ of elements of $V$ (i.e., increasing or decreasing) such that no two consecutive elements $x_i$ and $x_{i+1}$ belong to the same chain of the partition. The integer $n$ is the \emph{oscillation} of the sequence; $x$ and $y$ are its \emph{extremities}.

We recall that the oscillation of an alternating sequence with extremities $x$, $y$ is either $0$ or at most $d_{\inc( P)}$ (see I.2.4. Lemme p.6  of \cite{pouzet78}). This allows to define the following map.
Let $d_P$ be the map from $V\times V$ into $\NN$ such that.

\begin{enumerate}
\item $d_P(x,x)= 0$ for every $x\in V$;
\item $d_P(x,y)= 1$ if $x$ and $y$ are incomparable;
\item $d_P(x,y)=2$ if $x$ and $y$ are comparable and there is no alternating sequence from $x$ to $y$;
\item $d_P(x,y)=n+2$ if $n\not =0$ and   $n$ is the maximum of the oscillation of alternating sequences with extremities $x$ and $y$.
\end{enumerate}

We recall a result of \cite{pouzet78} II.2.5 Lemme, p. 6.

\begin{lemma}\label{lem:oscillation-distance}The map $d_P$ is a distance on any poset $P$ of width $2$ such that the incomparability graph is connected.  Moreover,  for every $x,y\in P$ the following inequalities hold:
\begin{equation}
0\leq d_{\inc( P)}(x,y)-d_P(x,y) \leq  2\lfloor d_{\inc( P)}(x,y)/3\rfloor.
\end{equation}
\end{lemma}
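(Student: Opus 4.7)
The plan is to show first that $d_P$ is a distance on $V$, and then to establish the two comparison inequalities with $d_{\inc(P)}$. Non-negativity, symmetry, and $d_P(x,y)=0 \Leftrightarrow x=y$ are immediate from the piecewise definition, so everything reduces to the triangle inequality together with the two displayed comparisons.

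For the triangle inequality $d_P(x,z)\leq d_P(x,y)+d_P(y,z)$, I would fix the (unique, by connectedness of $\inc(P)$) partition $V = C_1 \cup C_2$ into two chains and proceed by case analysis on the comparability status of the pairs $(x,y),(y,z),(x,z)$. The substantive case is when $x,y,z$ are pairwise comparable and alternating sequences of oscillations $p,q$ realising $d_P(x,y)=p+2$ and $d_P(y,z)=q+2$ both exist; concatenating them at $y$ yields a monotonic sequence from $x$ to $z$ whose alternation between $C_1$ and $C_2$ may fail at the junction, depending on the chain containing $y$. Such a defect can be repaired at a cost of at most one oscillation unit, either by deleting $y$ or by inserting an element of the appropriate chain incomparable to $y$ (which exists because $\inc(P)$ is connected), and the additive $+2$ in the clause $d_P(x,y)=n+2$ exactly absorbs this loss.

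For $d_P(x,y)\leq d_{\inc(P)}(x,y)$, the cases $x=y$ and $x\parallel y$ give equality, and when $x,y$ are comparable with no alternating sequence between them $d_P(x,y)=2 \leq d_{\inc(P)}(x,y)$ since comparable pairs are non-adjacent in $\inc(P)$. In the remaining case one invokes the quoted I.2.4 Lemme of \cite{pouzet78}, which bounds the maximum oscillation $n$ by $d_{\inc(P)}(x,y)-2$, giving $d_P(x,y)=n+2 \leq d_{\inc(P)}(x,y)$.

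For the upper bound $d_{\inc(P)}(x,y)-d_P(x,y)\leq 2\lfloor d_{\inc(P)}(x,y)/3\rfloor$, set $n := d_{\inc(P)}(x,y)$ and fix a shortest $\inc(P)$-path $x=x_0,x_1,\dots,x_n=y$. A shortest path in an incomparability graph is necessarily induced, so Lemma \ref{lem:inducedpath} (applied after assuming without loss of generality that $x_0<x_n$) yields $x_i<x_j$ whenever $j-i\geq 2$; moreover, because consecutive $x_i,x_{i+1}$ are incomparable and $P$ has width two, the path alternates between the two chains of $C_1\cup C_2$. Consequently $x_0,x_3,x_6,\dots$ is strictly increasing in $P$ and alternates between $C_1$ and $C_2$, hence is an alternating sequence in the poset sense. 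Writing $n=3m+r$ with $r\in\{0,1,2\}$, I would produce an alternating sequence from $x$ to $y$ of the required oscillation by taking $x_0,x_3,\dots,x_{3m}$ when $r=0$; by replacing the final term by $x_{3m+2}$ when $r=2$ (which still lies in the chain opposite to that of $x_{3m-3}$ and satisfies $x_{3m-3}<x_{3m+2}$); and, when $r=1$, by combining the alternating sequence $x_1,x_4,\dots,x_{3m+1}$ of oscillation $m$ with the already-proved triangle inequality $d_P(x_0,x_{3m+1})\geq d_P(x_1,x_{3m+1})-d_P(x_0,x_1)=(m+2)-1$. In each case $d_P(x,y)\geq n-2\lfloor n/3\rfloor$, as required. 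The main obstacle is the triangle inequality, where aligning chain-parities at the junction $y$ requires the case analysis sketched above; once that is in hand, the comparison inequalities reduce to the combinatorics of an induced shortest path together with Lemma \ref{lem:inducedpath}.
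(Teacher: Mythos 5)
The paper does not actually prove this lemma; it is recalled from \cite{pouzet78} without proof, and the only in-paper material bearing on it is Lemma \ref{lem:oscillation2} together with Proposition \ref{prop:oscillation}. Your treatment of the second displayed inequality is sound and essentially reproduces that material: a shortest path is induced, Lemma \ref{lem:inducedpath} makes $x_0,x_3,x_6,\dots$ increasing, width two forces the path to alternate between the two chains, and the three residues mod $3$ are handled as in Lemma \ref{lem:oscillation2} (your detour through the reverse triangle inequality when $r=1$ could be replaced by the paper's direct surgery, which avoids any dependence on the metric axioms). For the first inequality, note that the paper's paraphrase of I.2.4 of \cite{pouzet78} only bounds the oscillation by $d_{\inc(P)}(x,y)$, whereas you need $d_{\inc(P)}(x,y)-2$; if the proof is meant to be self-contained, this still requires an argument (e.g.\ an induction showing that an alternating sequence of oscillation $n\geq 1$ between comparable $x,y$ forces $d_{\inc(P)}(x,y)\geq n+2$).

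The genuine gap is the triangle inequality, where your argument runs in the wrong direction. Since $d_P(x,z)=N+2$ with $N$ the \emph{maximum} oscillation of an alternating sequence with extremities $x$ and $z$, proving $d_P(x,z)\leq d_P(x,y)+d_P(y,z)$ requires an \emph{upper} bound on $N$. Concatenating alternating sequences that realise $d_P(x,y)$ and $d_P(y,z)$ produces a long alternating sequence from $x$ to $z$, hence only a \emph{lower} bound on $N$: what that yields is an inequality of the form $d_P(x,y)+d_P(y,z)\leq d_P(x,z)+c$, the analogue of the third inequality of Lemma \ref{lem:inequality}, not the triangle inequality. (The concatenation also need not be monotonic when $y$ is not order-between $x$ and $z$, e.g.\ when $x<y$ and $z<y$.) A correct argument must go the other way: start from an alternating sequence from $x$ to $z$ of maximal oscillation $N$, locate $y$ relative to it --- including the cases where $y$ is incomparable to $x$ or to $z$, or lies outside the order interval determined by $x$ and $z$ --- and split or truncate the sequence so as to witness $d_P(x,y)+d_P(y,z)\geq N+2$. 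As written, the proposal does not establish that $d_P$ is a distance.
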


%The distance $d_P$ can be different from $d_{\inc( P)}$. For some bichains these two distances coincide, in that case we say that they are \emph{regular}. Two exemples, namely $\QQ_{\pi,2}$ and $\RR_{\pi,2}$,  are given below.

We give a slight  improvement of \cite{pouzet78} I.2.3. Corollaire, p. 5.

\begin{lemma}\label{lem:oscillation2}
Let $P$ be poset of width $2$ such that $\inc( P)$ is connected.  Let $n\in \NN$,  $r\in \{0,1\}$ and   $x, y\in P$ such that $\inc( P)$ contains an  induced   path of length $3n+r$ and extremities $x$ and $y$. If  $r\not =1$ and $n\geq 1$(resp. $r=1$ and $n\geq 2$) then there is an alternating sequence with extremities $x$,$y$ and oscillation $n$ (resp. $n-1$).
\end{lemma}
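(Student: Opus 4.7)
The plan is to build the alternating sequence as a suitable subsequence of the given induced path, using Lemma~\ref{lem:inducedpath} to secure monotonicity between the chosen terms and the index parity to secure alternation between the two chains.

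First I would reduce to the case $x<y$. Since the induced path $x=x_0,x_1,\ldots,x_{3n+r}=y$ has length at least $3$, its extremities are nonadjacent in $\inc(P)$ and hence comparable in $P$; if $x>y$ pass to the dual $P^*$. So assume $x<y$; then Lemma~\ref{lem:inducedpath} gives $x_i<x_j$ whenever $j-i\geq 2$. Because $\inc(P)$ is connected and $P$ has width $2$, the partition of $P$ into two chains $C_1,C_2$ is unique (as recalled after Lemma~\ref{lem:folklore}), and since consecutive vertices of the induced path are incomparable they lie in distinct chains. By induction on $i$, the chain containing $x_i$ is determined solely by the parity of $i$.

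The problem thus reduces to choosing indices $0=i_0<i_1<\cdots<i_s=3n+r$ such that each gap $i_{l+1}-i_l$ is odd (for chain alternation) and at least $2$ (so that Lemma~\ref{lem:inducedpath} forces $x_{i_l}<x_{i_{l+1}}$); hence each gap must be odd and at least $3$. In the case $r=0$ with $n\geq 1$, taking $i_l=3l$ for $0\leq l\leq n$ yields every gap equal to $3$ and oscillation $s=n$. In the case $r=1$ with $n\geq 2$, take $i_l=3l$ for $0\leq l\leq n-2$ together with $i_{n-1}=3n+1$; then the final gap is $7$, all earlier gaps are $3$, and the oscillation is $s=n-1$. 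In both cases $x_{i_0}<x_{i_1}<\cdots<x_{i_s}$ is a strictly increasing sequence alternating between $C_1$ and $C_2$, so it is an alternating sequence with extremities $x,y$ having the required oscillation.

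The main subtlety is the parity bookkeeping in the case $r=1$: the set of admissible gap lengths is $\{3,5,7,\dots\}$, so one cannot express $3n+1$ as a sum of $n$ such numbers (a sum of $n$ odd terms has the wrong parity), and consequently one gap must be enlarged beyond $3$, which costs precisely one unit of oscillation compared with the case $r=0$. Beyond this arithmetic check, everything follows from Lemma~\ref{lem:inducedpath} and the observation that chain membership along an induced path of $\inc(P)$ tracks the parity of the index.
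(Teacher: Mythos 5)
Your proof is correct and takes essentially the same route as the paper: it extracts the subsequence $x_0,x_3,\dots,x_{3n}$ when $r=0$ and $x_0,x_3,\dots,x_{3(n-2)},x_{3n+1}$ when $r=1$, using Lemma~\ref{lem:inducedpath} for monotonicity and index parity for chain alternation. The closing parity remark explaining why one unit of oscillation must be lost when $r=1$ is a pleasant addition but not needed for the statement.
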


\begin{proof}Since $n\geq 1$, $x$ and $y$ are comparable and we may suppose $x<y$. Let $x_0, \dots, x_{3n+r}$ be a path with $x_0=x$, $x_{3n+r}=y$. According to Lemma \ref{lem:inducedpath} the sequence $x_0, \dots, x_{3i}, \dots x_{3n}$ is alternating. If $r\not =1$, we may replace $x_{3n}$ by $x_{3n+r}$ in the above sequence and get an  alternating sequence with extremities $x$,$y$ and oscillation $n$. If $r=1$, we delete $x_{3n}$ and replace $x_{3(n-1)}$ by $x_{3n+r}$ in the above sequence. We get an alternating sequence of oscillation $n-1$.
\end{proof}

From Lemma \ref{lem:oscillation-distance}, the oscillation between two vertices $x$ and $y$ of $P$  is bounded above. With this lemma, the length of induced paths between $x$ and $y$ is bounded  too, that is the detour $D_{\inc( P)} (x,y)$ is an integer.  In fact we have:

\begin{proposition}
Let $P$ be poset of width $2$ such that $\inc( P)$ is connected and let $x,y\in P$. Then:

\begin{enumerate}[$(1)$]\label{prop:oscillation}
\item $d_{\inc( P)}(x,y)=d_P(x,y)= D_{\inc( P)}(x,y)$  if either $x=y$, in which case this common value is $0$, or $x$ and $y$ are incomparable, in  which case this common value is $1$.
\item $d_{\inc( P)}\geq d_P(x,y)\geq \lfloor  D_{\inc( P)}(x,y)/3 \rfloor +\epsilon$ where $\epsilon=1$ if  $D_{\inc( P)}(x,y)\equiv 1 \mod 3$ and $\epsilon=2$ otherwise.
\end{enumerate}

\end{proposition}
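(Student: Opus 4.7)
The plan is to prove (1) directly from the definitions and to prove (2) by a case analysis on the residue of $D_{\inc(P)}(x,y)$ modulo $3$, using Lemma \ref{lem:oscillation2} as the main workhorse and Lemma \ref{lem:oscillation-distance} for the left-hand inequality.

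For (1), the case $x=y$ is immediate from the definitions of all three quantities. If $x$ and $y$ are incomparable, then $\{x,y\}$ is an edge of $\inc(P)$, so $d_{\inc(P)}(x,y)=1$, and $d_P(x,y)=1$ by item (2) of the definition of $d_P$. Any induced path of length at least $2$ between $x$ and $y$ would have non-adjacent extremities in $\inc(P)$, contradicting that $\{x,y\}$ is an edge; thus $D_{\inc(P)}(x,y)=1$ as well.

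For (2), since (1) already handles the cases $x=y$ and $x\parallel y$, the content of the inequality is when $x$ and $y$ are distinct and comparable, in which case $d_P(x,y)\geq 2$ and $D:=D_{\inc(P)}(x,y)\geq 2$. The left inequality $d_{\inc(P)}(x,y)\geq d_P(x,y)$ is the first half of Lemma \ref{lem:oscillation-distance}. For the right inequality, write $D=3n+r$ with $r\in\{0,1,2\}$. The two boundary pairs $(n,r)\in\{(0,2),(1,1)\}$, corresponding to $D\in\{2,4\}$, give $\lfloor D/3\rfloor+\epsilon=2$; the bound then follows from $d_P(x,y)\geq 2$. In every other case Lemma \ref{lem:oscillation2} applies: if $r\in\{0,2\}$ and $n\geq 1$, it produces an alternating sequence between $x$ and $y$ of oscillation $n$, giving $d_P(x,y)\geq n+2=\lfloor D/3\rfloor+2$; if $r=1$ and $n\geq 2$, it produces an alternating sequence of oscillation $n-1$, giving $d_P(x,y)\geq n+1=\lfloor D/3\rfloor+1$. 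In each situation the announced bound is achieved.

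The only real obstacle is the corner case $D=4$, where Lemma \ref{lem:oscillation2} does not directly furnish an alternating sequence of the oscillation demanded by the stated bound; this is bypassed by noticing that for $D=4$ the right-hand side of the bound collapses to $2$, which is automatic for any pair of distinct comparable vertices via items (3)--(4) of the definition of $d_P$. Beyond this small piece of bookkeeping, the argument is essentially a translation between induced paths, via Lemma \ref{lem:oscillation2}, and alternating sequences, via the definition of $d_P$.
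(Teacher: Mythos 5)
Your proof is correct and follows essentially the same route as the paper: part (1) from the definitions, the left inequality of (2) from Lemma \ref{lem:oscillation-distance}, and the right inequality by writing $D_{\inc(P)}(x,y)=3n+r$ and invoking Lemma \ref{lem:oscillation2}, with the boundary cases $D\in\{2,4\}$ (i.e.\ $n=0$, and $r=1$ with $n=1$) disposed of by the trivial bound $d_P(x,y)\geq 2$ for distinct comparable vertices. This matches the paper's case analysis exactly.
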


\begin{proof}
Assertion $(1)$ is obvious. For $(2)$, we may suppose $x<y$. The first inequality is embodied in Lemma \ref{lem:oscillation-distance}. As observed above, $D_{\inc( P)}(x,y)$ is bounded. We may write $D_{\inc( P)}(x,y)=3n+r$ with $r$ be  the remainder of $D_{\inc( P)}(x,y) \mod 3$. Let $\alpha:=\lfloor  D_{\inc (P)}(x,y)/3 \rfloor +\epsilon$. We have $\alpha=n+1$ if $r=1$ and $\alpha=n+2$ otherwise. If  $n=0$ then since $x<y$, $r\not =1$, hence $\alpha=2$, since  $d_P(x,y)=2$, the inequality holds. We may suppose $n\geq 1$.  If $r\not =1$ then $\alpha= n+2$, while by definition of $d_P$  and Lemma \ref{lem:oscillation2}, $d_P(x,y) \geq n+2$. Hence, the second inequality holds. If $r=1$  then  $\alpha = n+1$. If  $n=1$ $d_{P}(x,y)\geq 2$ and the second inequality holds. Suppose $n\geq 2$. Then, by definition of $d_{P}(x,y)$ and by  Lemma \ref{lem:oscillation2}, $d_{P}(x,y)\geq n+1$. Thus second inequality holds.
\end{proof}

\begin{corollary}\label{cor:detour} If a bipartite permutation graph has diameter at most $k$ it contains no  induced path of length $3k$.
\end{corollary}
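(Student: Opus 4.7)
The plan is to reduce the statement to Proposition \ref{prop:oscillation} applied to the endpoints of a hypothetical long induced path. First, I would note that an induced path of length $3k$ is contained in a single connected component, and every connected component of a bipartite permutation graph has diameter at most $k$ as well, so we may assume $G$ is connected. By Lemma \ref{lem:w2}, $G = \inc(P)$ for some poset $P$ of width at most $2$, with $\inc(P)$ connected, which is precisely the setting in which Proposition \ref{prop:oscillation} applies.

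The heart of the argument is then contrapositive: I would suppose for contradiction that there is an induced path of length $3k$ with endpoints $x,y$, so that the detour satisfies $D := D_{\inc(P)}(x,y) \geq 3k$. Since $3k \geq 2$ (the case $k=0$ being trivial since a diameter-$0$ graph has one vertex), we are in the non-trivial clause of Proposition \ref{prop:oscillation}, which yields
\[
d_{\inc(P)}(x,y) \;\geq\; \left\lfloor \frac{D}{3} \right\rfloor + \epsilon,
\]
where $\epsilon = 1$ if $D \equiv 1 \pmod 3$ and $\epsilon = 2$ otherwise. I will show this forces $d_{\inc(P)}(x,y) \geq k+1$, contradicting $\delta_G \leq k$.

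The verification splits into the three residues of $D$ modulo $3$. Writing $D = 3n + r$ with $r \in \{0,1,2\}$, the inequality $3n + r \geq 3k$ forces $n \geq k$ in every case (since $n$ is an integer and $k - \tfrac{r}{3} > k-1$ for $r \leq 2$). If $r = 1$ then $\epsilon = 1$ and we get $d_{\inc(P)}(x,y) \geq n + 1 \geq k+1$; if $r \in \{0,2\}$ then $\epsilon = 2$ and we get $d_{\inc(P)}(x,y) \geq n + 2 \geq k+2$. Either way $d_{\inc(P)}(x,y) > k$, the required contradiction.

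There is no real obstacle here beyond a careful reading of the $\epsilon$-cases in Proposition \ref{prop:oscillation}; the only thing to be watchful about is the preliminary reduction to the connected case (an induced path with $3k+1$ vertices being contained in one component) and the trivial treatment of small $k$, so that the hypothesis $n \geq 1$ (respectively $n \geq 2$) hidden inside Lemma \ref{lem:oscillation2} and thus inside Proposition \ref{prop:oscillation} is genuinely satisfied.
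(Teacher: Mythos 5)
Your argument is correct and is exactly the route the paper intends: the corollary is stated as an immediate consequence of Proposition \ref{prop:oscillation} (via Lemma \ref{lem:w2} to realize the graph as $\inc(P)$ for $P$ of width at most $2$), and your case analysis on $D \bmod 3$ correctly handles the non-monotonicity of $\lfloor D/3\rfloor+\epsilon$ to get $d_{\inc(P)}(x,y)\geq k+1$ in every case. The only quibble is the degenerate case $k=0$, where the statement is vacuous at best; for $k\geq 1$ your reduction and arithmetic are sound.
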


\section{A proof of Theorem \ref{thm:widthtwo}}\label{proof:thm:widthtwo}

\begin{proof}
The implication $(i) \Rightarrow (ii)$ is obvious. The implication $(ii) \Rightarrow (iii)$ follows from Proposition \ref{prop:oscillation} given in Subsection \ref{subsection:detour-perm-bip}. The implication $(iii) \Rightarrow (iv)$ follows from Theorem \ref{thm:infinitepath-kite}. The implication $(iv) \Rightarrow (i)$ is obvious.
\end{proof}

\section{Example \ref{thm:infi-detour-no-path}}\label{proof:thm:infi-detour-no-path}
\begin{proof}Let $X:=\{y,x_0,x_1,x_2,\dots\}$ and for every integer $i\geq 0$ let $Z_{i}:=\{z_{0,i},z_{1,i},\dots,z_{i+3,i}\}$ be disjoint sets. We set $V:=\bigcup_{i\geq 0}Z_i\cup X$ and $P:=(V,\leq)$ where $\leq $ is the binary relation on  $V$ defined as follows: $X\setminus \{y\}$ is totally ordered by $\leq$ and $x_0<x_1<x_2<\dots <x_i<\dots$. For all $0\leq i<j$, every element of $Z_i$ is below every element of $Z_j$. For all $i\geq 0$, $y$ is smaller than all elements in $Z_i$ and is incomparable to $x_i$. For all $i\geq 0$, $x_i$ is smaller than all element of $Z_i\setminus \{z_{0,i}\}$ and $x_i$ is incomparable to all elements in $\bigcup_{j<i}Z_i\cup \{z_{0,i}\}$. For all integers $i\geq 0$ and for all $j\geq i+1$, $x_i$ is smaller than all element in $Z_j$. Finally, the restriction of $\inc(P)$ to $Z_i$ is the induced path $z_{0,i},z_{1,i},\dots,z_{i+3,i}$ so that $z_{0,i}<z_{2,i}<z_{4,i}<\dots$ and $z_{1,i}<z_{3,i}<z_{5,i}<\dots$ (see Figure \ref{width-three}). It is not difficult to see that $\leq$ is an order relation and that the corresponding poset $P$ can be covered by three chains.\\
\textbf{Claim 1:} The diameter of $\inc(P)$ is $3$.\\
Let $a,b$ be two distinct vertices of $\inc(P)$. If $a,b\in X$, then either $a=y$ or $b=y$ in which case $d_{\inc(P)}(a,b)=1$, or $y\not \in \{a,b\}$ in which case $d_{\inc(P)}(a,b)=2$ (indeed, say $a=x_i$ and $b=x_j$ with $i<j$, then $a,z_{0,i},b$ is an induced path in $\inc(P)$). Suppose now $a\in X$ and $b\not \in X$, say $b\in Z_i$ for some $i\geq 0$. If $a=y$, then $d_{\inc(P)}(a,b)=2$ (indeed, $a,x_{i+1},b$ is an induced path in $\inc(P)$). Else if $a=x_j$ for some $j\geq 0$, then  $d_{\inc(P)}(a,b)=1$ if $i<j$ and $d_{\inc(P)}(a,b)=3$ otherwise (indeed, $a,z_{0,j},x_{i+1},b$ is the shortest path joining $a$ to $b$). Next we suppose that $\{a,b\}\cap X=\varnothing$. If $a,b\in Z_i$ for some $i\geq 0$, then $d_{\inc(P)}(a,b)=2$ (indeed, $a,x_{i+1},b$ is an induced path in $\inc(P)$). Else if $a\in Z_i$ and $b\in Z_j$  for some $i\neq j$, then $d_{\inc(P)}(a,b)=2$ (indeed, $a,x_{i+j},b$ is an induced path in $\inc(P)$).\\
\textbf{Claim 2:} An induced infinite path in $\inc(P)$ contains necessarily finitely many elements of $X$.\\
Suppose an induced infinite path $C$ contains infinitely many vertices from $X$. Since $\inc(P)$ induces an independent set on $X\setminus \{y\}$ and $C$ is connected we infer that $C$ must meet infinitely many $Z_i$'s. Hence, there exists some $x_i\in C$ which has degree at least $3$ in $C$ and this is not possible.\\
\textbf{Claim 3:} Deleting all vertices of $X$ from $\inc(P)$ leaves a disconnected graph.\\
Clearly, for all $i\geq 0$, $Z_i$ is a connected component of $\inc(P)\setminus X$.\\
\noindent Now suppose for a contradiction that $\inc(P)$ embeds an infinite induced path $C$. It follows from Claim 2 that we can assume $V(C)\cap X=\varnothing$. Hence, $C$ is an induced infinite path of $\inc(P)\setminus X$. We derive a contradiction since all connected components of $\inc(P)\setminus X$ are finite (indeed, the connected components of $\inc(P)\setminus X$ are finite paths i.e. the subgraphs of $\inc(P)\setminus X$ induced on the $Z_i$'s).

\noindent \textbf{Claim 4:} The vertex $y$ has an infinite induced detour.\\
Indeed, $\inc(P)$ induces a path on $\{y,x_i,\}\cup Z_i$ of length $i+5$ for all $i\geq 0$.
\end{proof}

\section{Order and metric convexities of incomparability graphs}\label{section:convexity}
In this section we compare the notions of order convexity and metric convexity with respect to the distance on the incomparability  graph of a poset.

We recall few definitions already provided in the introduction. Let $P:=(V,\leq)$ be a poset. We recall $Conv_P(X)$ is the smallest convex set containing $X$ and that
\[Conv_P(X)=\{z\in P : x\leq z\leq y \mbox{ for some } x,y\in X\}=\downarrow X\cap \uparrow X.\]

Let $G:=(V,E)$ be a graph. We equip it with the graphic distance $d_G$. A \emph{ball} is any subset $B_G(x, r):= \{y\in V: d_G(x,y)\leq r\}$ where $x\in V, r\in \NN$. A subset of $V$ is \emph{convex} with respect to the distance $d_G$ if this is an intersection of balls. The \emph{least convex subset} of $G$ containing $X$ is
\[Conv_{G}(X):=\displaystyle \bigcap_{X\subseteq B_G(x,r)}B_G(x,r).\]

Let $X\subseteq V$ and $r\in \NN$. Define
\[B_G(X,r):=\{v\in V : d_G(v,x)\leq r \mbox{ for some } x\in X\}.\]

%From  Theorem \ref{thm:orderconvex} below, it will follow that \emph{every ball in an incomparability graph $G$ of a poset is order convex and that the graph induced on it is an  isometric subgraph of $G$}.

The proof of the following lemma is elementary and is left to the reader.

\begin{lemma}\label{lem:b_g}Let $G$ be a graph, $X\subseteq V(G)$ and $r\in \NN$. Then
\begin{enumerate}[$(1)$]
  \item $B_G(X,r)=B_G(B_G(X,1),r-1)= B_G(B_G(X,r-1), 1)$ for all $r\geq 1$.
  \item $B_G(X\cup Y,r)=B_G(X,r)\cup B_G(X,r)$.
\end{enumerate}
\end{lemma}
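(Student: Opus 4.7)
The plan is to verify both identities by unfolding the definition of $B_G(\cdot,\cdot)$ and using the triangle inequality for the graphic distance $d_G$ (with the convention that $d_G(u,v)=\infty$ when $u$ and $v$ lie in different connected components, so that all inequalities used are vacuously safe).

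For part $(1)$, I will first establish $B_G(X,r)\subseteq B_G(B_G(X,1),r-1)$. Given $v\in B_G(X,r)$, pick $x\in X$ with $d_G(v,x)\leq r$, and take a shortest $v$--$x$ path; then its second-to-last vertex $w$ satisfies $d_G(w,x)\leq 1$ and $d_G(v,w)\leq r-1$, so $w\in B_G(X,1)$ and $v\in B_G(B_G(X,1),r-1)$. For the reverse inclusion, if $v\in B_G(B_G(X,1),r-1)$ then pick $w\in B_G(X,1)$ with $d_G(v,w)\leq r-1$ and $x\in X$ with $d_G(w,x)\leq 1$; the triangle inequality gives $d_G(v,x)\leq r$. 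The equality $B_G(X,r)=B_G(B_G(X,r-1),1)$ is handled symmetrically, splitting a shortest $v$--$x$ path at its first vertex instead of its last.

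For part $(2)$ (reading the right-hand side as $B_G(X,r)\cup B_G(Y,r)$, since the lemma is trivial otherwise), the forward inclusion is immediate: if $v\in B_G(X\cup Y,r)$, a witness $z\in X\cup Y$ with $d_G(v,z)\leq r$ lies in $X$ or in $Y$, placing $v$ in $B_G(X,r)$ or $B_G(Y,r)$. The reverse inclusion is equally routine since $X\subseteq X\cup Y$ and $Y\subseteq X\cup Y$ give $B_G(X,r),B_G(Y,r)\subseteq B_G(X\cup Y,r)$.

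There is no real obstacle here: both parts are pure set-theoretic manipulations of the definition, and nothing beyond the triangle inequality is needed. The only subtlety worth mentioning is the convention for $d_G$ on disconnected graphs, and the correction of the apparent typo in $(2)$.
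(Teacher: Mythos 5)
Your proof is correct and is exactly the elementary unfolding of the definition that the paper has in mind; the authors explicitly leave this lemma to the reader, so there is no written proof to diverge from. The only nitpicks are the degenerate case $v=x$ in part $(1)$ (where a shortest path has no ``second-to-last vertex,'' but then $v\in X\subseteq B_G(X,1)$ trivially) and your correct reading of the evident typo $B_G(X,r)\cup B_G(X,r)$ as $B_G(X,r)\cup B_G(Y,r)$ in part $(2)$; neither affects the validity of the argument.
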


%Note that $X\subseteq Conv_{P}(X)\subseteq Conv_{\inc(P)}(X)$.

\begin{lemma}\label{lem:convex-boule}Let $P:=(V,\leq)$ be a poset and $G$ be its incomparability graph, $X\subseteq V$ and $r\in \NN$. Then
\begin{equation}\label{eq1}
B_G(\downarrow X,r)=(\downarrow X)\cup B_G(X,r)=\downarrow B_G(X,r).
\end{equation}
\begin{equation}\label{eq2}
B_G(\uparrow X,r)=(\uparrow X)\cup B_G(X,r)=\uparrow B_G(X,r).
\end{equation}
\begin{equation}\label{eq3}
B_G(\uparrow X\cap \downarrow X,r)= B_G(\uparrow X,r)\cap B_G(\downarrow X,r).
\end{equation}
\begin{equation}\label{eq4}
B_G(Conv_P(X),r)=Conv_P(X)\cup B_G(X,r)=Conv_P(B_G(X,r)).
\end{equation}
\end{lemma}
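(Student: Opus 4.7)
The plan is to prove (1) directly from first principles, obtain (2) as the dual, then derive (3) and (4) as formal consequences. Throughout, the guiding idea is: adjacent vertices in $G$ are incomparable in $P$, so walking along an induced path between elements of different ``order types'' must cross an incomparability edge that we can exploit.

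For (1), the inclusions $\downarrow X \cup B_G(X,r) \subseteq B_G(\downarrow X,r)$ and $\downarrow X \cup B_G(X,r) \subseteq \downarrow B_G(X,r)$ are immediate from $X \subseteq \downarrow X$ and from $X \subseteq B_G(X,r)$, together with the fact that every vertex has distance $0$ to itself. The two nontrivial inclusions use an extremal index along a path. For $B_G(\downarrow X,r) \subseteq \downarrow X \cup B_G(X,r)$, given $v \in B_G(\downarrow X, r) \setminus \downarrow X$, pick $y \in \downarrow X$ with $d_G(v,y) \leq r$, choose a shortest induced path $v = v_0, v_1, \ldots, v_k = y$, and let $i$ be the smallest index with $v_{i+1} \in \downarrow X$ (so $v_i \notin \downarrow X$ and $0 \leq i < k$). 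Fix $x \in X$ with $v_{i+1} \leq x$. Since $v_i$ and $v_{i+1}$ are adjacent in $G$, they are incomparable in $P$; a case check rules out $v_i \leq x$ (would place $v_i$ in $\downarrow X$) and $v_i \geq x$ (would give $v_i \geq v_{i+1}$ by transitivity), forcing $v_i \parallel x$. Then $d_G(v,x) \leq i+1 \leq r$, so $v \in B_G(X,r)$.

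For $\downarrow B_G(X,r) \subseteq \downarrow X \cup B_G(X,r)$, the argument is symmetric in spirit. Take $w \leq v$ with $v \in B_G(X,r)$ and assume $w \notin \downarrow X$. Along a shortest induced path $v = v_0, \ldots, v_k = x_0 \in X$, let $i$ be the largest index with $w \leq v_i$ (well defined: $i \geq 0$ and $i < k$ because $w \not\leq x_0$). Then $w \not\leq v_{i+1}$; while $w \geq v_{i+1}$ combined with $w \leq v_i$ would give $v_{i+1} \leq v_i$, contradicting the incomparability of $v_i, v_{i+1}$. Hence $w \parallel v_{i+1}$, and $d_G(w, x_0) \leq 1 + (k - i - 1) \leq r$, giving $w \in B_G(X,r)$. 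Equation (2) is the dual statement applied to the reversed order $P^{\ast}$.

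Equation (3) is a set-theoretic consequence of (1) and (2) together with $X \subseteq \uparrow X \cap \downarrow X$: the inclusion $\subseteq$ is monotonicity, and for $\supseteq$, any $v$ in the right-hand intersection lies in $(\uparrow X \cup B_G(X,r)) \cap (\downarrow X \cup B_G(X,r))$, which equals $(\uparrow X \cap \downarrow X) \cup B_G(X,r)$, and each piece is contained in $B_G(\uparrow X \cap \downarrow X, r)$. Finally, (4) follows mechanically: writing $Conv_P(X) = \uparrow X \cap \downarrow X$, the first equality of (4) is (3) followed by (1) and (2), while the second equality applies (1) and (2) to the set $B_G(X,r)$ (using $\uparrow B_G(X,r) = \uparrow X \cup B_G(X,r)$ and the dual) and collects terms.

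The main obstacle is the case analysis in (1): one must correctly identify the critical index where the induced path crosses from outside to inside $\downarrow X$ (respectively, where $w$ ceases to lie below $v_i$) and then exploit the fact that neighbors on an induced path are incomparable in $P$. Once this extremal-index trick is in place, the remaining equations follow by routine lattice-theoretic manipulation and do not require any appeal to Lemma \ref{lem:inducedpath} beyond its immediate corollary that adjacency in $G$ equals incomparability in $P$.
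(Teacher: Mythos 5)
Your proof is correct, but it reaches the two key equations by a genuinely different route than the paper. For (\ref{eq1}) the paper inducts on $r$: it reduces everything to the radius-one case via the identity $B_G(X,r)=B_G(B_G(X,r-1),1)$ and then does the order-theoretic case analysis only for $r=1$; you instead prove (\ref{eq1}) for all $r$ at once by locating the critical index on a shortest path (the first step entering $\downarrow X$, respectively the last $v_i$ above $w$) and using that consecutive path vertices are incomparable to manufacture an edge from $v_i$ to some $x\in X$ (resp.\ from $w$ to $v_{i+1}$). Both arguments are sound and rest on the same basic mechanism --- comparability must ``jump over'' an incomparability edge --- but yours avoids the auxiliary Lemma \ref{lem:b_g} and the induction. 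The divergence is larger for (\ref{eq3}): the paper runs a second induction on $r$ with its own geometric basis step, whereas you observe that once (\ref{eq1}) and (\ref{eq2}) are known, the distributive identity $(A\cup C)\cap(B\cup C)=(A\cap B)\cup C$ together with $X\subseteq\ \uparrow X\cap\downarrow X$ makes (\ref{eq3}) purely formal; this is a real simplification (and is essentially the computation the paper itself performs later when deriving (\ref{eq4})). Your treatment of (\ref{eq2}) by duality and of (\ref{eq4}) by collecting terms matches the paper. One cosmetic remark: you do not need the path to be induced, only shortest, since all you use is that consecutive vertices are adjacent and hence incomparable.
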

\begin{proof}We mention at first that all above equalities are clearly true for $r=0$. We claim that it is enough to prove (\ref{eq1}). Indeed, (\ref{eq2}) is obtained from (\ref{eq1}) applied to $P^{*}$. We now show how to obtain (\ref{eq3}) using (\ref{eq1}) and (\ref{eq2}). The proof is by induction on $r$. \\
Basis step: $r=1$.\\
Clearly, $B_G(\uparrow X\cap \downarrow X,1)\subseteq B_G(\uparrow X,1)\cap B_G(\downarrow X,1).$ Let $x\in B_G(\uparrow X,1)\cap B_G(\downarrow X,1)$. There are $y_1\in \downarrow X$ and $y_2\in \uparrow X$  such that $x$ is equal to $y_1$ or incomparable to $y_1$ and similarly $x$ is equal to $y_2$ or incomparable to $y_2$. Since $y_1\in \downarrow X$ and $y_2\in \uparrow X$ there are $x_1, x_2\in X$ such that  $y_1\leq x_1$ and $x_2\leq y_2$. If $x$ is incomparable or equal to $x_1$ or to $x_2$, then $x\in B_G(X, 1)\subseteq B_G(\uparrow X\cap \downarrow X,1)$ as required. If not, $x_2\leq x\leq x_1$ (since $x$ is equal to $y_1$ or incomparable to $y_1$ and  $x$ is equal to $y_2$ or incomparable to $y_2$), hence $x\in \downarrow X\cap \uparrow X\subseteq B_G(\downarrow X\cap \uparrow X, 1)$, as required.

%
%and since $X$ is ord that $x$ is comparable to all elem $y_1\leq ents of $\uparrow X\cap \downarrow X$. Since $\uparrow X\cap \downarrow X$ is convex we infer that $x$ is either below or above all elements of $\uparrow X\cap \downarrow X$. In both cases we obtain that $x$ is comparable to $y_1$ or $y_2$ since $y_1$ is greater or equal than an element of $X$ and $y_2$ is less or equal than an element of $X$. This is a contradiction.\\
Inductive step: Suppose $r>1$. We have
\begin{eqnarray*}
B_G(\uparrow X\cap \downarrow X,r) &=& B_G(B_G(\uparrow X\cap \downarrow X,r-1),1)\\
&=& B_G(B_G(\uparrow X,r-1)\cap B_G(\downarrow X,r-1),1)\;  \mbox{(by the induction hypothesis})\\
&=& B_G(\uparrow B_G(X,r-1)\cap \downarrow B_G(X,r-1),1)\;  \mbox{(by equations (\ref{eq1}) and (\ref{eq2})})\\
&=& B_G(\uparrow B_G(X,r-1),1)\cap B_G(\downarrow B_G(X,r-1),1)\;  \mbox{(follows from the basis step $r=1$})\\
&=& \uparrow B_G( B_G(X,r-1),1)\cap \downarrow B_G( B_G(X,r-1),1)\;  \mbox{(follows from (\ref{eq1}) and (\ref{eq2})} )\\
&=& \uparrow B_G(X,r)\cap \downarrow(B_G(X,r))\\
&=& B_G(\uparrow X,r)\cap (\downarrow B_G(X,r))\;  \mbox{(follows from (\ref{eq1})} ).
\end{eqnarray*}

We now show how to obtain (\ref{eq4}) using (\ref{eq1}), (\ref{eq2}) and (\ref{eq3}).\\
From (\ref{eq1}) and (\ref{eq2}) we obtain
\[B_G(\downarrow X,r)\cap B_G(\uparrow X,r) =((\downarrow X)\cup B_G(X,r))\cap((\uparrow X)\cup B_G(X,r))=\downarrow(B_G(X,r))\cap \uparrow(B_G(X,r)).\]
This is equivalent to
\[B_G(\downarrow X,r)\cap B_G(\uparrow X,r) =(\downarrow X\cap \uparrow X)\cup B_G(X,r))=\downarrow(B_G(X,r))\cap \uparrow(B_G(X,r)).\]
Using (\ref{eq3}) we have
\[B_G(\downarrow X\cap \uparrow X,r) =(\downarrow X\cap \uparrow X)\cup B_G(X,r))=\downarrow(B_G(X,r))\cap \uparrow(B_G(X,r)).\]
The required equalities follow by definition of the operator $Conv$.

We now prove (\ref{eq1}).\\
Basis step: $r=1$.\\
Since $X\subseteq \downarrow X$ we have $B_G(X,1)\subseteq B_G(\downarrow X,1)$. Hence, we have $B_G(\downarrow X,1)\supseteq (\downarrow X)\cup B_G(X, 1)$. From $X\subseteq B_G(X,1)$ we deduce that $\downarrow X\subseteq \downarrow(B_G(X, 1))$. Hence, $(\downarrow X)\cup B_G(X,1)\subseteq \downarrow B_G(X,1)$.

Next, we prove that $B_G(\downarrow X,1)\subseteq (\downarrow X)\cup B_G(X,1)$. Let $x\in B_G(\downarrow X,1)$. There exists then $y\in \downarrow X$ at distance at most $1$ from $x$ that is either $y=x$ or $y\parallel x$. If $y= x$ then $x\in \downarrow X$. Otherwise, since $y\in \downarrow X$  there is $y_1\in X$ such that $y\leq y_1$. If $y_1$ is incomparable or equal to $x$ then $x\in B_G(X,1)$. Otherwise $y_1$ is comparable to $x$. Necessarily, $x \leq y_1$ since  $x\parallel y$.  Hence $x\in \downarrow X$.\\
Inductive step: Let $r>1$.  We suppose true the equalities
\[B_G(\downarrow X,r-1)=(\downarrow X)\cup B_G(X,r-1)=\downarrow B_G(X,r-1).\]
We apply the operator $T\longrightarrow B_G(T,1)$ to each term of the previous equalities and obtain
\[B_G(B_G(\downarrow X,r-1),1)=B_G((\downarrow X)\cup B_G(X,r-1),1)=B_G(\downarrow B_G(X,r-1),1).\]
We have %Let $U,V,W$ the three terms of the previous equalities. Then $U=
\[B_G(B_G(\downarrow X,r-1),1)= B_G(\downarrow X,r) \mbox{ (see (1) of Lemma \ref{lem:b_g})}.\]
Also,
\begin{eqnarray*}
B_G((\downarrow X)\cup B_G(X,r-1),1) &=& B_G(\downarrow X, 1)\cup B_G(B_G(X,r-1),1)\mbox{ (see (2) of Lemma \ref{lem:b_g}))}\\
&=& B_G(\downarrow X,1)\cup B_G(X,r) \mbox{ (see (2) of Lemma \ref{lem:b_g})}\\
&=& (\downarrow X) \cup B_G(X,1) \cup B_G(X,r) \mbox{ (follows from (\ref{eq1}) with $r=1$)}.\\
&=& (\downarrow X)\cup B_G(X,r).
\end{eqnarray*}
Finally we have
\begin{eqnarray*}
B_G(\downarrow(B_G(X,r-1)),1) &=& \downarrow B_G((B_G(X,r-1)),1)\\
&=& \downarrow (B_G(X,r)).
\end{eqnarray*}
\end{proof}

\section{A proof of Theorem \ref{thm:orderconvex} and some consequences}\label{section:proof-thm-orderconvex}
We now proceed to the proof of Theorem \ref{thm:orderconvex}.

\begin{proof}
$(a)$ Apply  successively equations (\ref{eq1}), (\ref{eq2}) and (\ref{eq4}) of Lemma \ref {lem:convex-boule}.

$(b)$ Suppose $r=1$. Let  $G': =G_{\restriction B_G(X, 1)}$ and $x,y\in B_G(X, 1)$. Let $n:=d_G(x,y)$. Clearly, $n\leq d_{G'}(x,y)$. To prove that the equality holds,  we may suppose that $2\leq n<\infty$. We argue by induction on $n$. Let  $u_0, \dots, u_n$ be a path in $G$ connecting  $x$ and $y$. If $n\geq 4$, we have $x_0< x_2<x_n$ by Lemma \ref{lem:inducedpath}. Since $B_G(X, 1)$ is convex, it contains $x_2$, hence, by induction,
$d_G(x,x_2)= d_{G'}(x,x_2)=2$ and $d_G(x_2,y)= d_{G'}(x_2, y)=n-2$, hence $d_G(x,y)= d_{G'}(x,y)$. Thus, to conclude,  it suffices to solve the cases $n=2$ and $n=3$. Let $x',y'\in X$ with $x'$ incomparable or equal to  $x$ and $y'$ incomparable or equal to $y$.
If $u_{n-1}$ is incomparable or equal to $y'$ then $x_{n-1}\in B_G(X, 1)$. From the induction, $d_{G'}(x, x_{n-1})= d_{G}(x, x_{n-1})$ hence
$d_{G'}(x, y)= d_{G}(x, y)$ as required. Hence,  we may suppose $u_{n-1}$ comparable to $y'$, and similarly  $u_{1}$ comparable  to $x'$.

Also, if $x'$ is incomparable or equal to $x_2$ then $x, x', x_2$ is a path in $B_G(X)$; if $n=2$ we have $d_{G'}(x,y)=2$ as required, if $n=3$, then $x, x', x_2, y$  is a path in $B_G(X)$ and  $d_{G'}(x,y)=3$ as required. Thus we may suppose  $x'$ comparable to $x_2$ and, similarly, $y'$ comparable to $x_{n-2}$.

Since $x'$ is incomparable or equal to $x_0$ and, by Lemma \ref{lem:inducedpath}, $x_0<x_2$, we have $x'< x_2$. Similarly,  we have $x_{n-2}<y'$.
Since $x_1$ is comparable to $x'$ and  incomparable to $x_2$ we deduce $x' \leq x_1$ from $x'<x_2$. Similarly, we deduce $x_{n-1}\leq y'$. For $n=2$ we have  $x'\leq x_1, \leq y'$ and for $n=3$,  $x'\leq x_1, x_2\leq y'$. By order convexity of $X$, $x_1\in X$ (and also $x_2\in X$  if $n=3$, hence the path $x=x_0,x_1,x_2=y$ if $n=2$ and the path $x=x_0,x_1,x_2, x_3=y$ if $n=3$ is in $B_{G}(X)$  and thus $d_{G'}(x,y)=n$.

Suppose  $r>1$. Then from $(a)$ above,  $B_G(X, 1)$ order convex. Via the induction hypothesis,   $G_{\restriction  B_G(B_G(X, 1), r-1)}$ is an isometric subgraph of $G$.  Since  $B_G(X, r)=B_G(B_G(X, 1), r-1)$, $G_{\restriction B_G(X,r)}$ is an isometric subgraph of $G$.
\end{proof}

As the proof of the Lemma \ref{lem:convex-boule} suggests, balls are not necessarily geodesically convex (for an example, look at the  ball $B(x, 1)$ in a four element cycle). A consequence of Theorem \ref{thm:orderconvex} is that the order convexity of balls is equivalent to the following inequality:

\begin{corollary}\label{lem:monotone-distance}Let $P$ be a poset and let $G$ be its incomparability graph. Then \begin{equation} \label{eq:metric-inequalities}
d_{G}(u,v)\leq d_{G}(x,y) \mbox{ for all }  x\leq u\leq v\leq y  \mbox{ in }  P.
\end{equation}
\end{corollary}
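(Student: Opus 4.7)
The plan is to deduce the inequality directly from part $(a)$ of Theorem~\ref{thm:orderconvex}, which guarantees that every metric ball $B_G(w,r)$ in the incomparability graph is order convex in $P$. The case $d_G(x,y)=\infty$ is vacuous, so I will assume $r:=d_G(x,y)$ is a finite nonnegative integer.

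First I would look at the ball $B_G(x,r)$. Both $x$ (at distance $0$) and $y$ (at distance $r$) belong to it. Since this ball is order convex by part $(a)$ of Theorem~\ref{thm:orderconvex} and since $x\leq v\leq y$, the vertex $v$ must also belong to $B_G(x,r)$. This yields the intermediate inequality $d_G(x,v)\leq d_G(x,y)$.

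Second, I would replay the same argument with a shifted centre. Put $s:=d_G(x,v)$ and consider the ball $B_G(v,s)$, which is again order convex. It contains both $v$ and $x$, and the hypothesis $x\leq u\leq v$ then forces $u\in B_G(v,s)$. Consequently $d_G(u,v)\leq s=d_G(x,v)\leq d_G(x,y)$, which is exactly the required inequality.

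There is no real obstacle: the whole argument is a two-step use of ball convexity. The only deliberate design choice is to trap the two intermediate vertices in \emph{two} different balls (first $v$ inside a ball centred at $x$, then $u$ inside a ball centred at $v$), because a single ball around $u$ or $v$ does not have both $x$ and $y$ as obvious members at the guaranteed radius $r$.
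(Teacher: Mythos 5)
Your proof is correct and follows essentially the same route as the paper: both arguments reduce the claim to two applications of the order convexity of balls from part $(a)$ of Theorem~\ref{thm:orderconvex}. The only cosmetic difference is that the paper deduces $d_G(x,v)\leq d_G(x,y)$ by applying its first step to the dual poset, whereas you obtain it directly by trapping $v$ in the ball $B_G(x,d_G(x,y))$.
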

\begin{proof}
The inequality above amounts to $d_{G}(u,v)\leq d_{G}(x,v)\leq d_{G}(x,y)$. We prove the first inequality;  the second inequality follows by the same argument applied to the dual of $P$.  We may suppose that $x<u<v$,  otherwise nothing to prove. Let  $n:=d_{G}(v,x)$.  By  $(a)$ of Theorem  \ref{thm:orderconvex}, $B_G(v, n)$ is order convex. Since $x, v\in B_G(v, n)$ and $x\leq u\leq v$, then $u\in B(v, n)$ amounting to $d_G(u, v)\leq n=d_{G}(x,v)$. Conversely, assuming that inequality (\ref{eq:metric-inequalities}) holds, observe that every ball $B_G(x,r)$ is order-convex. We may suppose $r\geq 1$, otherwise the conclusion is obvious. Let $u,v\in B_G(x,r)$ and $w\in P$ with $u<w<v$.  If $x \parallel w$,  then $d_G(x, w)=1\leq r$ hence $w\in B_G(x,r)$. If not, then either $x<w$ or $w<x$.  In the first case,  from $x<w<v$, inequality (\ref{eq:metric-inequalities}) yields  $d_G(x, w)\leq d_G(x, v)\leq r$ hence $w\in B_G(x,r)$, whereas in the second case, from  $u<w<x$, inequality (\ref{eq:metric-inequalities}) yields  $d_G(w, x)\leq d_G(u, x)\leq r$ hence $w\in B_G(x,r)$.
\end{proof}

\begin{corollary}\label{lem:intermediateinducedpath} $\delta_G(X)= \delta_G(Conv_{P}(X))=\delta_G(Conv_{G}(X))$ for every subset $X$ of a poset $P$.
\end{corollary}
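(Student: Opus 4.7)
The plan is to prove each of the two equalities by a pair of inequalities. The easy direction, namely $\delta_G(X) \leq \delta_G(Conv_P(X))$ and $\delta_G(X) \leq \delta_G(Conv_G(X))$, is immediate from the containments $X \subseteq Conv_P(X)$ and $X \subseteq Conv_G(X)$ together with the definition of $\delta_G(\cdot)$ as the supremum of pairwise distances. All the work lies in the two reverse inequalities, and these are of quite different flavours: one is purely metric, the other uses the order structure through Corollary~\ref{lem:monotone-distance}.

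For $\delta_G(Conv_G(X)) \leq \delta_G(X)$ I would run a general ball-intersection argument that requires no order at all. Set $D:=\delta_G(X)$. Every $x\in X$ satisfies $X\subseteq B_G(x,D)$, so by definition of $Conv_G$ as an intersection of balls containing $X$ we get $Conv_G(X)\subseteq B_G(x,D)$ for every $x\in X$. Now fix any $u\in Conv_G(X)$; the previous step yields $d_G(x,u)\leq D$ for all $x\in X$, i.e.\ $X\subseteq B_G(u,D)$, which in turn forces $Conv_G(X)\subseteq B_G(u,D)$. Hence $d_G(u,v)\leq D$ for every pair $u,v\in Conv_G(X)$, and the supremum over these pairs gives the required bound.

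For $\delta_G(Conv_P(X))\leq \delta_G(X)$ I would use the monotonicity inequality of Corollary~\ref{lem:monotone-distance}, which is the genuine content of this reverse inequality. Pick $u,v\in Conv_P(X)=\downarrow X\cap \uparrow X$. If $u$ and $v$ are comparable, say $u\leq v$, then there exist $x,y\in X$ with $x\leq u$ and $v\leq y$, so $x\leq u\leq v\leq y$; Corollary~\ref{lem:monotone-distance} yields $d_G(u,v)\leq d_G(x,y)\leq \delta_G(X)$. If $u\parallel v$ then $d_G(u,v)=1$, which is bounded by $\delta_G(X)$ as soon as $|X|\geq 2$; the edge case $|X|\leq 1$ is trivial because then $Conv_P(X)=X$. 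Taking suprema finishes the argument.

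The only mild obstacle is the case split in the second reverse inequality, where incomparable pairs in $Conv_P(X)$ must be treated separately from comparable ones; the statement is essentially a repackaging of the metric characterisation of $Conv_G$ via balls on the one hand, and of the order-monotonicity of $d_G$ on the other.
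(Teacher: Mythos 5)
Your proof is correct, and it coincides with the paper's argument for one half but takes a genuinely different route for the other. The equality $\delta_G(X)=\delta_G(Conv_G(X))$ is handled exactly as in the paper: the same general ball-intersection argument, valid in any metric space. Where you diverge is the $Conv_P$ equality. The paper never bounds $\delta_G(Conv_P(X))$ directly; instead it observes that, since every ball is order convex by $(a)$ of Theorem \ref{thm:orderconvex}, the set $Conv_G(X)$ is an intersection of order-convex sets and hence contains $Conv_P(X)$, giving the sandwich $X\subseteq Conv_P(X)\subseteq Conv_G(X)$ and therefore $\delta_G(X)\leq \delta_G(Conv_P(X))\leq \delta_G(Conv_G(X))=\delta_G(X)$. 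You instead prove $\delta_G(Conv_P(X))\leq \delta_G(X)$ head-on: for comparable $u\leq v$ in $\downarrow X\cap\uparrow X$ you extract $x\leq u\leq v\leq y$ with $x,y\in X$ and invoke Corollary \ref{lem:monotone-distance}, and for incomparable pairs you use $d_G(u,v)=1$ with the trivial $|X|\leq 1$ edge case set aside. Both arguments ultimately rest on the order convexity of balls (Corollary \ref{lem:monotone-distance} is itself a consequence of Theorem \ref{thm:orderconvex}$(a)$), so neither is more elementary in substance; the paper's sandwich is shorter and dispenses with your case split, while your version makes the role of the monotonicity inequality explicit and does not need the inclusion $Conv_P(X)\subseteq Conv_G(X)$ at all.
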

\begin{proof} Since by $(a)$ of Theorem \ref{thm:orderconvex}, each ball $B_G(x,r)$ is order convex, $Conv_{P}(X) \subseteq Conv_{G}(X)$. Hence  $\delta_G(X)\leq \delta_G(Conv_{P}(X))\leq \delta_G(Conv_{G}(X))$.

The equality $\delta_G(X)=\delta_G(Conv_{G}(X))$ is a general convexity property of metric spaces. Let $r:= \delta_G(X)$. Let $x,y \in Conv_G(X)$. We prove that $d_G(x,y) \leq r$.
First $X\subseteq B_G(x,r)$.  Indeed, let $z\in X$; since $\delta_G(X)=r$, $X \subseteq B_G(x, r)$. Since $Conv_G(X)$ is the intersections of balls containing $X$, we have $Conv(X)  \subseteq B_G(z,r)$, hence $z\in B_G(x, r)$.
Next, from  $X\subseteq B_G(x,r)$ we deduce $Conv(X) \subseteq B_G(x,r)$ hence $y\in B_G(x,r)$ that is $d_G(x,y)\leq r$.
\end{proof}

\begin{lemma}\label{lem:inequality}Let $P:=(V,\leq)$ be a poset and $G$ be its incomparability graph. Let $x,y,z\in V$ be such that $x<z<y$. Then
\[\max\{d_G(x,z),d_G(z,y)\}\leq d_G(x,y)\leq d_G(x,z)+d_G(z,y)\leq d_G(x,y) +2.\]
\end{lemma}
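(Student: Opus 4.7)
The first inequality is an immediate application of Corollary \ref{lem:monotone-distance}: from $x\leq x\leq z\leq y$ we get $d_G(x,z)\leq d_G(x,y)$, and from $x\leq z\leq y\leq y$ we get $d_G(z,y)\leq d_G(x,y)$. The middle inequality is just the triangle inequality for $d_G$. All the content lies in the third inequality, and I may assume $d_G(x,y)$ is finite (otherwise it is vacuous), whence by the first inequality $d_G(x,z)$ and $d_G(z,y)$ are finite too.

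For the third inequality, the plan is to start from a shortest path $x=x_0,x_1,\ldots,x_n=y$ in $G$, where $n=d_G(x,y)$, locate a single vertex on it that is either equal or incomparable to $z$, and use it as a "splice point" to produce a walk from $x$ to $z$ and a walk from $z$ to $y$ whose total length is at most $n+2$. Concretely, let $k$ be the smallest index such that $x_k\not<z$ in $P$. Since $x_0=x<z$ we have $k\geq 1$, and since $x_n=y>z$ such a $k$ exists with $k\leq n$.

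The key step is to show that $x_k$ is equal or incomparable to $z$, i.e.\ that $z<x_k$ is impossible. Indeed, if $z<x_k$ held, then combining with $x_{k-1}<z$ (by minimality of $k$) would give $x_{k-1}<x_k$, contradicting the fact that $\{x_{k-1},x_k\}$ is an edge of $\inc(P)$. So either $x_k=z$, in which case the walks $x_0,\ldots,x_k=z$ and $z=x_k,\ldots,x_n$ give $d_G(x,z)+d_G(z,y)\leq k+(n-k)=n$; or $x_k$ is incomparable to $z$, in which case $x_0,\ldots,x_k,z$ is a walk of length $k+1$ from $x$ to $z$ and $z,x_k,x_{k+1},\ldots,x_n$ is a walk of length $n-k+1$ from $z$ to $y$, whose total length is $n+2$. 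Either way, $d_G(x,z)+d_G(z,y)\leq n+2=d_G(x,y)+2$.

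The only obstacle worth noting is checking that the "splice point" $x_k$ exists with the required property, and this is handled by the single-line transitivity argument above (relying only on the defining edge $\{x_{k-1},x_k\}$ of $\inc(P)$, not on the full force of Lemma \ref{lem:inducedpath}). No further case analysis is required, so the proof should fit in a few lines.
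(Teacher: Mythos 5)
Your proof is correct and follows essentially the same route as the paper's: both locate an interior vertex of a shortest $x$--$y$ path that is incomparable to $z$ (the paper by induction along the path, you by taking the first index $k$ with $x_k\not<z$ and using incomparability of consecutive path vertices) and then splice $z$ in to get two walks of total length at most $d_G(x,y)+2$. The only cosmetic difference is your case $x_k=z$, which in fact cannot occur (it would make $x_{k-1}<z=x_k$ comparable to $x_k$), but including it does no harm.
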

\begin{proof}The first inequality follows from Corollary \ref{lem:monotone-distance}. The second inequality is the triangular inequality. We now prove the third inequality. Let $p:=d_G(x,z)$, $q:=d_G(z,y)$, $r:=d_G(x,y)$. \\
\textbf{Claim:} Let $x_0:=x,\dots, x_r:=y$ be a path from $x$ to $y$. Then there exist $i\not \in \{0,r\}$ such that $z$ is incomparable to $x_i$.\\
\noindent{\bf Proof the claim.} By induction on $r$. Note that since $x<y$ we have $r\geq 2$. If $r=2$, then necessarily $z$ is incomparable to $x_1$. Suppose $r>2$. Then $z\nleq x_1$. If $z$ is incomparable to $x_1$, then we are done. Otherwise $x_1<z$ and we may apply the induction hypothesis to $x_1,y$ and the path $x_1,\dots, x_r=y$. This completes the proof of the claim.\hfill $\Box$

Let $i$ be such in the Claim. Then $x_0:=x,\dots, x_i, z$ is a path from $x$ to $z$ of length $i+1$ and $z, x_i, x_{i+1}, \dots, x_r$ is a path from $z$ to $y$ of length $r-i+1$. Then $p+q\leq i+1+r-i+1=r+2$. The proof of the lemma is now complete.
\end{proof}

\begin{lemma}Let $x_0,...,x_n$ be an isometric path in a graph $G$ with $n\geq 2$. There exists a vertex $x_{n+1}$ such that $x_0,...,x_n,x_{n+1}$ is an isometric path in $G$ if and only if $B_G(x_n,1)\nsubseteq B_G(x_0,n)$.
\end{lemma}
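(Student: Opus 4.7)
The plan is to verify both directions directly from the definition: $x_0,\ldots,x_m$ is an isometric path iff $d_G(x_i,x_j)=|i-j|$ for all $i,j$.

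The forward direction is immediate. If $x_{n+1}$ extends the given path to an isometric path, then $d_G(x_n,x_{n+1})=1$ puts $x_{n+1}\in B_G(x_n,1)$, while $d_G(x_0,x_{n+1})=n+1>n$ shows $x_{n+1}\notin B_G(x_0,n)$, so $B_G(x_n,1)\nsubseteq B_G(x_0,n)$.

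For the converse, pick any $y\in B_G(x_n,1)\setminus B_G(x_0,n)$ and set $x_{n+1}:=y$; I claim this does the job. First, $y\neq x_n$ because $x_n\in B_G(x_0,n)$ but $y\notin B_G(x_0,n)$, hence $d_G(x_n,y)=1$. The triangle inequality gives $d_G(x_0,y)\leq d_G(x_0,x_n)+d_G(x_n,y)=n+1$, while the hypothesis $y\notin B_G(x_0,n)$ gives $d_G(x_0,y)\geq n+1$, so $d_G(x_0,y)=n+1$. For $0\leq i\leq n$, applying the triangle inequality in two ways,
\[ d_G(x_i,y)\leq d_G(x_i,x_n)+d_G(x_n,y)=n+1-i, \]
\[ n+1=d_G(x_0,y)\leq d_G(x_0,x_i)+d_G(x_i,y)=i+d_G(x_i,y), \]
so $d_G(x_i,y)=n+1-i$.

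These equalities say $d_G(x_j,x_k)=|j-k|$ for all $0\leq j,k\leq n+1$, so the vertices are pairwise distinct and no pair $\{x_j,x_k\}$ with $|j-k|\geq 2$ is an edge (otherwise its distance would be $1$); thus $x_0,\ldots,x_n,x_{n+1}$ is an induced path, and it is isometric by construction. I do not anticipate a real obstacle here — the argument is just careful use of the triangle inequality; the hypothesis $n\geq 2$ merely guarantees that $x_0,\ldots,x_n$ is a path with at least one internal vertex and plays no further role.
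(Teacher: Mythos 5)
Your argument is correct and is essentially the same as the paper's: in the converse direction you pick $x_{n+1}\in B_G(x_n,1)\setminus B_G(x_0,n)$, derive $d_G(x_0,x_{n+1})=n+1$ from the triangle inequality together with $x_{n+1}\notin B_G(x_0,n)$, and then squeeze $d_G(x_i,x_{n+1})=n+1-i$ with two applications of the triangle inequality, exactly as in the paper. Your additional closing remark that these distance equalities force the extended sequence to be an induced, hence isometric, path (and your observation that $n\geq 2$ is not really needed) only makes explicit what the paper leaves implicit.
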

\begin{proof}$\Rightarrow$ is obvious.\\
$\Leftarrow$ Suppose $B_G(x_n,1)\nsubseteq B_G(x_0,n)$ and let $x_{n+1} \in B_G(x_n,1)\setminus B_G(x_0,n)$. \\
\textbf{Claim 1:} $d_G(x_0,x_{n+1})=n+1$.\\
Indeed, since $x_{n+1} \in B_G(x_n,1)\setminus B_G(x_0,n)$ we have $d_G(x_0,x_{n+1})>n$. From the triangular inequality $d_G(x_0,x_{n+1})\leq d_G(x_0,x_{n})+ d_G(x_n,x_{n+1})=n+1$.\\
\textbf{Claim 2:} $d_G(x_{j},x_{n+1})=n+1-j$ for all $0\leq j\leq n$.\\
Indeed, From the triangular inequality $d_G(x_{j},x_{n+1})\leq d_G(x_{j},x_{n})+ d_G(x_{n},x_{n+1})=n-j+1$. Similarly, $d_G(x_{0},x_{n+1})\leq d_G(x_{0},x_{j})+ d_G(x_{j},x_{n+1})$ and therefore $d_G(x_{j},x_{n+1})\geq d_G(x_{0},x_{n+1})-d_G(x_{0},x_{j})=n+1-j$. The equality follows.
\end{proof}

We could restate the previous lemma as follows. There is an isometric path of length $n+1$ starting at some vertex $x_0$ if there is some $x_n\in B_G(x_0,n)$ such that $B_G(x_n,1)\nsubseteq B_G(x_0,n)$.

An other consequence of the convexity of balls in an incomparability graph is the following:

\begin{lemma} \label{ball-infinitepath}Let $G$ be the incomparability graph of a poset $P$. If a ball contains infinitely many vertices of a one way infinite induced path then it contains all vertices except may be  finitely many vertices of that path.
\end{lemma}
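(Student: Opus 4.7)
The plan is to combine two facts already in the paper: the structure theorem for induced paths in incomparability graphs (Lemma \ref{lem:inducedpath}), which forces a one-way infinite induced path to be ``monotone at index-gap two'' in the order of $P$, and the order convexity of balls in $G$ (part $(a)$ of Theorem \ref{thm:orderconvex}). Together these pin the intersection of a ball with the vertex set of the path to a final segment of the path.

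Concretely, let $y_0,y_1,y_2,\dots$ be the one-way infinite induced path and let $B:=B_G(x,r)$ be the ball. First I would establish that, up to reversing the order, $y_i<y_j$ in $P$ whenever $j-i\ge 2$. This is obtained by applying Lemma \ref{lem:inducedpath} to the finite subpath $y_0,\dots,y_n$ (for $n\ge 2$, the endpoints are non-adjacent hence comparable in $P$) and then propagating the direction: if $y_0<y_2$, then in $y_0,y_1,y_2,y_3$ we cannot have $y_3<y_0$ (it would force $y_2<y_0$), so $y_0<y_3$, and Lemma \ref{lem:inducedpath} gives $y_i<y_j$ for all $0\le i<j\le 3$ with $j-i\ge 2$; iterating produces the monotonicity on all of $\NN$.

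Next I would extract the conclusion from order convexity. Let $a_0$ be the least index with $y_{a_0}\in B$; such an index exists because $B$ meets the path. For any $c\ge a_0+2$, the hypothesis that $B$ contains infinitely many $y_i$'s supplies some $b\ge c+2$ with $y_b\in B$. Then $c-a_0\ge 2$ and $b-c\ge 2$, so by the monotonicity we have
\[
y_{a_0}\;<\;y_c\;<\;y_b\qquad\text{in }P.
\]
Part $(a)$ of Theorem \ref{thm:orderconvex} asserts that $B$ is order convex in $P$, so $y_c\in B$. Hence every $y_c$ with $c\ge a_0+2$ lies in $B$, and only the finitely many vertices $y_0,\dots,y_{a_0+1}$ can fail to do so.

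There is no serious obstacle: the only delicate point is checking that the finite-subpath monotonicity indeed stitches together into a global direction on $\NN$, and this is handled by the short induction sketched above. Everything else is a one-line application of order convexity of balls.
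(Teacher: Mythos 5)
Your proof is correct and follows essentially the same route as the paper: first derive the monotonicity $y_i<y_j$ for $j-i\ge 2$ from Lemma \ref{lem:inducedpath} (the paper asserts this directly for the infinite path, while you spell out the stitching induction, which is a welcome extra detail), then sandwich each $y_c$ between two path vertices lying in the ball and invoke order convexity of balls from Theorem \ref{thm:orderconvex}$(a)$. No substantive difference from the paper's argument.
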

\begin{proof} Let $\mathrm P_{\infty}$ be an infinite induced path of $G$ and $(x_n)_{n\in \NN}$ be an enumeration of its vertices, so that $(x_n, x_{n+1}) \in E(G)$ for $n\in \NN$. Without loss of generality we may suppose that $x_0<x_2$ (otherwise,  replace the  order of $P$ by its dual). By Lemma \ref {lem:inducedpath} we have $x_i< x_j$ for every  $i+2\leq j$. Let $B_G(x, r)$ be a ball of $G$ containing infinitely many vertices of $\mathrm P_{\infty}$. Let $x_i\in \mathrm P_{\infty} \cap B(x,r)$. We claim that $x_j\in  \mathrm P_{\infty} \cap B(x,r)$ for all $j\geq i+2$. Indeed, due to our hypothesis, we may pick $x_r \in  \mathrm P_{\infty} \cap B(x,r)$ with $r\geq j+2$. We have $x_i<x_j<x_r$. Due to the convexity of $B(x,r)$ we have $x_j\in B(x, r)$. This proves our claim.
\end{proof}

Said differently:
\begin{lemma}\label{ball-infinitepath2}
If a one way infinite  induced path $\mathrm P_{\infty}$ has an infinite diameter in the incomparability graph $G$ of a poset then every ball of $G$ with finite radius contains only finitely many vertices of $\mathrm P_{\infty}$.
\end{lemma}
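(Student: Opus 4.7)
The plan is to derive this as an immediate contrapositive of Lemma \ref{ball-infinitepath}. Suppose, for contradiction, that some ball $B_G(x,r)$ with $r\in \NN$ contains infinitely many vertices of $\mathrm P_{\infty}$. Then Lemma \ref{ball-infinitepath} applies and yields that $B_G(x,r)$ contains all but finitely many vertices of $\mathrm P_{\infty}$. Write $F:=V(\mathrm P_{\infty})\setminus B_G(x,r)$, a finite set.

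The goal is then to show that, under this hypothesis, the diameter of $\mathrm P_{\infty}$ in $G$ is finite, contradicting the assumption. I would split the pairs of vertices of $\mathrm P_{\infty}$ into three cases and bound the $G$-distance in each. First, if $u,v\in V(\mathrm P_{\infty})\cap B_G(x,r)$, then by the triangular inequality $d_G(u,v)\leq d_G(u,x)+d_G(x,v)\leq 2r$. Second, if $u\in F$ and $v\in V(\mathrm P_{\infty})\cap B_G(x,r)$, then $d_G(u,v)\leq d_G(u,x)+d_G(x,v)\leq d_G(u,x)+r$, and since $F$ is finite, $\max_{u\in F}d_G(u,x)$ is a finite number (each such distance is finite because $u$ is joined to $x$ via the path $\mathrm P_{\infty}$ together with any path from a vertex of $\mathrm P_{\infty}\cap B_G(x,r)$ to $x$). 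Third, if $u,v\in F$, then $d_G(u,v)$ is finite for the same reason, and $F$ being finite makes $\max_{u,v\in F}d_G(u,v)$ finite as well.

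Taking the maximum over these three finite quantities produces a finite upper bound on the diameter of $\mathrm P_{\infty}$ in $G$, contradicting the hypothesis that $\mathrm P_{\infty}$ has infinite diameter. The argument is essentially a routine contraposition; the only point requiring a moment of care is making sure that every distance $d_G(u,v)$ invoked is finite, which follows from the fact that $\mathrm P_{\infty}$ is a connected subgraph of $G$ lying in a single connected component, so all vertices of $\mathrm P_{\infty}$ and the chosen center $x$ (which sees infinitely many of them at distance at most $r$) lie in that same component.
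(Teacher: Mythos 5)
Your proof is correct and matches the paper's intent exactly: the paper gives no separate proof of this lemma, introducing it with ``Said differently'' as an immediate contrapositive consequence of Lemma \ref{ball-infinitepath}, which is precisely the derivation you carry out (and you usefully make explicit the routine finiteness bookkeeping showing the diameter of $V(\mathrm P_\infty)$ would be finite).
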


%\subsection{Convexity and isometry of metric balls in incomparability graphs}

%\begin{theorem} \label{thm:orderconvex}Let $P:=(V,\leq)$ be a poset, $G$ be its incomparability graph,  $X\subseteq V$ and $r\in \NN$.
%\begin{enumerate}[(a)]
%\item If $X$ is an  initial segment, respectively a final segment, respectively an order convex subset of $P$ then $B_G(X,r)$ is an initial segment, respectively a final segment, respectively an order convex subset of $P$. In particular, for all $x\in V$ and $r\in \NN$, $B_G(x,r)$ is order convex;
%\item \label{lem:convex-connected}  If $X$ is order convex then the graph induced by $G$ on $B_G(X,r)$ is an isometric subgraph of $G$. In particular, if $X$ is  included into a connected component  of $G$ then the graph induced by $G$ on $B_G(X,r)$ is connected.
%\end{enumerate}
%\end{theorem}

\section{Induced infinite paths in incomparability graphs: A proof of Theorem \ref{thm:infinitepath-kite}}\label{section:proof-thm:infinitepath-kite}

The proofs of $(1)$ and $(2)$ of Theorem \ref{thm:infinitepath-kite} are similar. We construct a  strictly increasing sequence $(y_n)_{n\in \NN}$ of vertices such that $3\leq d_G(y_n,y_{n+1})<+\infty$ for all $n\in \NN$  and we associate to each $n\in \NN$  a finite path  $\mathrm P _n:=z_{(n,0)},z_{(n,1)},...,z_{(n,r_n)}$ of $G$ of length $r_n:=d_G(y_n,y_{n+1})$ joining $y_n$ and $y_{n+1}$. We show first that the graph  $G':=G_{\restriction \bigcup_{n\in \NN}V(\mathrm P_n)}$  is connected and has an infinite diameter. Next, we prove  that it  is locally finite. Hence from K\H{o}nig's Lemma (\ref{thn:konig}), it contains an isometric path. This path yields  an induced path of $G$. The detour via K\H{o}nig's Lemma is because the union of the two consecutive paths   $\mathrm P_n$ and $\mathrm P_{n+1}$ do not form necessarily a path. In the first proof, our paths have length $3$. In the second proof, their end vertices have degree at least $3$.

%A key in our proof is Lemma \ref{lem:inequality}.
\begin{lemma}\label{lem:oneside}Let $P:=(V,\leq)$ be a poset so that its incomparability graph $G$ is connected and has infinite diameter. Let $x\in V$ be arbitrary. Then at least one of the sets $d_G^+(x):=\{d_G(x, y) :  x<y\in V\}$ or $d_G^-(x):=\{d_G(x, y): y\in V \;\text{and}\; y<x\}$ is unbounded in $\NN$. Furthermore, if $d_G^+(x):=\{d_G(x, y) :  x<y\in V\}$ is unbounded in $\NN$ and $z>x$, then $d_G^+(z):=\{d_G(z, y) :  z<y\in V \}$ is unbounded in $\NN$ (in particular, $z$ cannot be maximal in $P$).
\end{lemma}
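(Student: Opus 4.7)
The plan is to prove both assertions using the triangle inequality together with the monotonicity of the distance in incomparability graphs provided by Corollary \ref{lem:monotone-distance}.

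For the first assertion, I would first show that the set $\{d_G(x,y) : y \in V\}$ itself is unbounded in $\NN$. Since $G$ has infinite diameter, there exist pairs $a,b \in V$ with $d_G(a,b)$ arbitrarily large; the triangle inequality $d_G(a,b) \leq d_G(a,x) + d_G(x,b)$ then forces at least one of $d_G(x,a), d_G(x,b)$ to be at least $d_G(a,b)/2$. Next, I would observe that any $y \in V$ with $d_G(x,y) \geq 2$ must be comparable to $x$, because incomparable vertices are at distance $1$ in $G$. Hence $\{y \in V : d_G(x,y) \geq 2\}$ is partitioned into the elements strictly above $x$ and those strictly below $x$, and a pigeonhole argument yields that at least one of $d_G^+(x)$ or $d_G^-(x)$ is unbounded.

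For the second assertion, suppose $d_G^+(x)$ is unbounded and let $z > x$. Set $k := d_G(x,z)$ and fix any integer $N$; pick $y > x$ with $d_G(x,y) \geq N + k$. I would argue that such a $y$ is necessarily strictly above $z$: if $y \parallel z$ then $d_G(y,z)=1$ and the triangle inequality gives $d_G(x,y) \leq k+1$, contradicting the choice of $y$ (for $N$ large); if $y < z$ then $x \leq y \leq z$ and Corollary \ref{lem:monotone-distance} yields $d_G(x,y) \leq d_G(x,z) = k$, again a contradiction; and $y = z$ is impossible since $d_G(x,y) > k$. Hence $y > z$, and applying the triangle inequality in the form $d_G(x,y) \leq d_G(x,z) + d_G(z,y)$ gives $d_G(z,y) \geq N$. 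Therefore $d_G^+(z)$ is unbounded, and in particular $z$ is not maximal since we have exhibited a strict upper bound.

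The main obstacle I anticipate is the case analysis in the second assertion, specifically ensuring that a vertex $y$ chosen far from $x$ actually lies strictly above $z$ rather than merely being above $x$; the monotonicity property in Corollary \ref{lem:monotone-distance} is exactly what rules out the case $y < z$, and the observation that incomparability forces distance $1$ handles the incomparable case. Once these two alternatives are eliminated, the rest reduces to standard triangle-inequality bookkeeping.
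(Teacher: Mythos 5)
Your proposal is correct and follows essentially the same route as the paper: the paper proves both assertions in contrapositive form (bounding $V$ inside a ball when both $d_G^+(x)$ and $d_G^-(x)$ are bounded, and showing $d_G^+(z)$ bounded forces $d_G^+(x)$ bounded via the same three-way case analysis on the position of $y$ relative to $z$), using exactly the same ingredients — the triangle inequality, the fact that incomparable vertices are at distance $1$, and the monotonicity of the distance along chains from Corollary \ref{lem:monotone-distance} (via Lemma \ref{lem:inequality}). Your direct formulation is equivalent and equally valid.
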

\begin{proof}Suppose for a contradiction that the sets $d_G^+(x):=\{d_G(x, y) :  x<y\in V\}$ and $d_G^-(x):=\{d_G(x, y): y\in V \;\text{and}\; y<x\}$ are bounded. Let $r:=\max d_G^+(x)$ and $r':=\max d_G^-(x)$. Then $V:=B_G(x,\max\{2, r,r'\})$ and therefore the diameter of $G$ is bounded contradicting our assumption. Now let $z>x$ and suppose for a contradiction that $d_G^+(z):=\{d_G(z, y) :  z<y\in V \}$ is bounded and let $r:=\max d_G^+(z)$. Let $x<y$. If $y\leq z$ then $d(x,y)\leq d(x, z)$ by Lemma \ref{lem:inequality};  if $z\parallel y$ then  $d(x,y)\leq d(x,z)+1$;  if $z\leq y$ then we have $d(x, y)\leq d(x,z)+d(z,y)\leq d(x,z)+r$, hence, the set $d_G^+(x)$ is bounded,  contradicting our assumption.
\end{proof}

\noindent {\bf Proof of (1) of Theorem \ref{thm:infinitepath-kite}.}
We construct a sequence $(x_{n})_{n\in \NN}$ of vertices (see Figure \ref{fig:sequence}). We pick  $x_0\in V$. According to Lemma \ref{lem:oneside}, one of the set $d_G^+(x_0):=\{d_G(x_0, y) :  x_0<y\in V\}$ and $d_G^-(x_0):=\{d_G(x_0, y): y\in V \;\text{and}\; y<x_0\}$ is unbounded. We may assume without loss of generality that the set $d_G^+(x_{0} )$ is unbounded. Choose an element $x_3>x_0$ at distance three from $x_0$ in $G$ and let $x_{0}, x_{1}, x_{2}, x_{3}$ be a path joining $x_0$ to $x_1$. Note that necessarily we have $x_0<x_{2}$ and $x_{1}<x_3$. Now suppose constructed a sequence $x_0, x_1, \dots, x_{3n}$ such that $x_0< x_3\dots <x_{3n}$ and such that $x_{3i}, x_{3i+1}, x_{3i+2}, x_{3i+3}$ is a path of extremities $x_{3i}$ and $x_{3(i+1)}$ for $i<n$. According to Lemma \ref{lem:oneside}, the  set $d_G^+(x_{3n})$ is unbounded. Hence, it contains a vertex $x_{3(n+1)}$ at distance three from $x_{3n}$. Let  $x_{3n}, x_{3n+1}, x_{3n+2}, y_{3n+3}$ be a path of extremities $x_{3n}$ and $x_{3(n+1)}$. By Lemma \ref{lem:inducedpath} we have necessarily:
\begin{equation}\label{equ:inequality inducedpath}
x_{3n}<x_{3n+2}\;  \text{and}\;  x_{3n+ 1}<x_{3n+3}.
\end{equation}

%\begin{figure}[h!]
%\begin{center}
%\includegraphics[width=6in,height=6in]{sequence1.pdf}
%\end{center}
%\caption{}
%\label{fig:sequence}
%\end{figure}

\begin{figure}[h!]
\begin{center}
\leavevmode \epsfxsize=1.2in \epsfbox{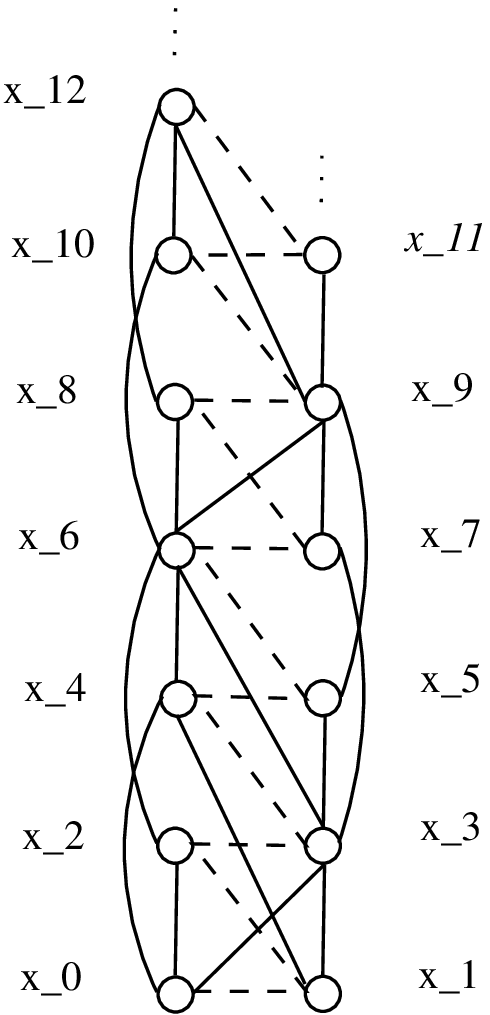}
\end{center}
\caption{}
\label{fig:sequence}
\end{figure}

Let $P'$ be the poset induced on the set $V':= \{x_n :n\in \NN\}$ and $G'$ be the incomparability graph of $P'$. According to our construction $G'$ contains  a spanning path (not necessarily induced), hence it is connected.

\begin{claim}\label{claim:inequalityG}  $d_{G}(x_0,x_{3n})\geq n+2$ for every $n\geq 1$.
\end{claim}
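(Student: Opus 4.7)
The plan is to prove the claim by induction on $n$, with the key tool being Lemma \ref{lem:inequality}, which gives the \emph{super-additive} inequality $d_G(x,z) + d_G(z,y) \leq d_G(x,y) + 2$ whenever $x < z < y$. This inequality is crucial: naive triangle inequality only yields an upper bound $d_G(x_0, x_{3n}) \leq 3n$, so we need precisely the reverse-direction estimate supplied by Lemma \ref{lem:inequality} to extract a useful lower bound.

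The base case $n=1$ is immediate, since $d_G(x_0, x_3) = 3 = 1+2$ by the choice of $x_3$ in the construction.

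For the inductive step, fix $n \geq 2$ and suppose $d_G(x_0, x_{3(n-1)}) \geq n+1$. Because the construction ensures $x_0 < x_3 < \cdots < x_{3n}$ is a strictly increasing chain, we have $x_0 < x_{3(n-1)} < x_{3n}$, so Lemma \ref{lem:inequality} applies and gives
\[
d_G(x_0, x_{3(n-1)}) + d_G(x_{3(n-1)}, x_{3n}) \leq d_G(x_0, x_{3n}) + 2.
\]
The construction guarantees $d_G(x_{3(n-1)}, x_{3n}) = 3$, and the inductive hypothesis gives $d_G(x_0, x_{3(n-1)}) \geq n+1$. Substituting yields $d_G(x_0, x_{3n}) \geq (n+1) + 3 - 2 = n+2$, completing the induction.

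There is no real obstacle here once Lemma \ref{lem:inequality} is in hand: the content of the argument is entirely bookkeeping. The only subtle point worth flagging is that each increment along the chain $x_0 < x_3 < \cdots < x_{3n}$ contributes only $+1$ to the lower bound, not $+3$ as one might hope — this is precisely the loss built into the ``$+2$'' in Lemma \ref{lem:inequality}, and it is the reason the claim yields $n+2$ rather than $3n$. This modest growth rate is nevertheless enough to guarantee that $d_G(x_0, x_{3n}) \to \infty$, which is what will be needed subsequently to verify that the graph $G'$ has infinite diameter.
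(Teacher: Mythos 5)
Your proof is correct and is essentially identical to the paper's: both arguments proceed by induction on $n$ and use the third inequality of Lemma \ref{lem:inequality} applied to the chain $x_0 < x_{3(n-1)} < x_{3n}$ together with the fact that consecutive terms $x_{3k}$, $x_{3(k+1)}$ are at distance exactly $3$ by construction. The only difference is cosmetic (stepping from $n-1$ to $n$ rather than from $n$ to $n+1$).
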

Since $d_{G'}(x_0,x_{3n})\geq d_{G}(x_0,x_{3n)})$, it follows that the  diameter of $G'$ is infinite.

\noindent{\bf Proof of  Claim \ref {claim:inequalityG}.}
We prove the inequality of the claim by induction on $n\geq 1$. By definition, the inequality holds for $n=1$.  Suppose the inequality holds for $n$. It follows from  Lemma \ref{lem:inequality} that $n+5 \leq d_G(x_0, x_{3n})+ d_G(x_{3n}, x_{3(n+1)})\leq d_G(x_0, x_{3(n+1)})+2$ and therefore the inequality holds for $n+1$.

\begin{claim}\label{claim:pathkonig} The incomparability graph of $P'$ is locally finite, that is for all $x\in P'$, $\iinc_{P'}(x):= \{y\in V': x \parallel y\}$ is finite.
\end{claim}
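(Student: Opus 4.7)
The goal is to prove that for every $x_k\in V'$ only finitely many elements of $V'$ are incomparable to $x_k$ in $P$. The strategy is to combine a quantitative lower bound on distances between the ``anchor'' vertices $x_{3i}$ with a short triangle inequality argument.

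\medskip
\noindent\textbf{Step 1 (generalizing Claim \ref{claim:inequalityG}).} I would first upgrade Claim \ref{claim:inequalityG} to
\[
 d_G(x_{3i},x_{3j})\geq j-i+2 \qquad \text{for all integers } 0\leq i<j.
\]
The proof is the same induction on $j-i$ as in Claim \ref{claim:inequalityG}: the base case $j=i+1$ holds because $d_G(x_{3i},x_{3(i+1)})=3$ by construction, and for the inductive step one applies Lemma \ref{lem:inequality} to the chain $x_{3i}<x_{3n}<x_{3(n+1)}$, together with $d_G(x_{3n},x_{3(n+1)})=3$, to conclude $d_G(x_{3i},x_{3(n+1)})\geq (n+1)-i+2$.

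\medskip
\noindent\textbf{Step 2 (bounding the indices of incomparable vertices).} For any $x_k,x_m\in V'$, write $k=3a+r$ and $m=3b+s$ with $r,s\in\{0,1,2\}$. Since each constructed block $x_{3a},x_{3a+1},x_{3a+2},x_{3a+3}$ is a path in $G$, we have $d_G(x_k,x_{3a})\leq r\leq 2$ and $d_G(x_m,x_{3b})\leq s\leq 2$. If $x_k$ and $x_m$ are incomparable in $P$, then $d_G(x_k,x_m)=1$, and the triangle inequality yields
\[
 d_G(x_{3a},x_{3b})\leq d_G(x_{3a},x_k)+d_G(x_k,x_m)+d_G(x_m,x_{3b})\leq 5.
\]
When $a\neq b$, Step 1 forces $|a-b|+2\leq 5$, so $|a-b|\leq 3$; when $a=b$, at most four values of $m$ fall in the same block as $x_k$. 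In either case, only a bounded (in fact, absolutely constant) number of indices $m$ can yield a vertex incomparable to $x_k$, so $\iinc_{P'}(x_k)$ is finite.

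\medskip
The only mild technicality is verifying the generalized distance lower bound in Step 1, and this is a direct recycling of the induction already used for Claim \ref{claim:inequalityG}; once it is in hand, local finiteness follows from a one-line triangle inequality. No ingredient beyond Lemma \ref{lem:inequality} and the construction of the sequence is needed.
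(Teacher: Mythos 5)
Your proof is correct, but it takes a genuinely different route from the paper's. The paper proves Claim \ref{claim:pathkonig} by an explicit order-theoretic case analysis: using Lemma \ref{lem:inducedpath} and transitivity it pins down, for each of the residues $k\equiv 0,1,2 \pmod 3$, the exact short list of indices $m$ for which $x_m$ can be incomparable to $x_k$, arriving at the sharp bound $|\iinc_{P'}(x)|\leq 6$. You instead argue metrically: the anchors satisfy $d_G(x_{3i},x_{3j})\geq j-i+2$ (a routine restart of the induction behind Claim \ref{claim:inequalityG}, valid because $x_{3i}<x_{3n}<x_{3(n+1)}$ lets Lemma \ref{lem:inequality} apply verbatim), every vertex lies within $G$-distance $2$ of its anchor along its block path, and incomparability in $P'$ means $d_G$-distance $1$, so the triangle inequality confines the anchor of any incomparable vertex to a window of width $3$. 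Both arguments are sound; yours is shorter and conceptually cleaner but yields only a crude absolute constant (around $20$), whereas the paper's yields the tight constant $6$ together with the precise identity of the possible incomparable neighbours. Since the claim is only used to invoke K\H{o}nig's Lemma, local finiteness is all that matters and your weaker bound loses nothing for the application. One cosmetic slip: when $a=b$ there are three, not four, indices $m=3a+s$ with $s\in\{0,1,2\}$ in the same block as $x_k$, but this affects nothing.
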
In fact, $\iinc_{P'}(x)$ has at most six elements.

\noindent{\bf Proof of  Claim \ref {claim:pathkonig}.} We have
\begin{enumerate}[$(a)$]
  \item $\iinc_{P'}(x_{3n})\subseteq \{x_{3n-1}, x_{3n+1}\}$ for $n\geq 1$.
  \item $\iinc_{P'}(x_{3n+1})\subseteq \{x_{3(n-2)+2}, x_{3(n-1)+1}, x_{3(n-1)+2},  x_{3n},  x_{3n+2}, x_{3(n+1)+1}\}$ for $n\geq 2$.
  \item $\iinc_{P'}(x_{3n+2})\subseteq \{x_{3n-1}, x_{3(n+1)}, x_{3(n+1)+1} x_{3(n+1)+2}, x_{3(n+2)+1}\}$ for $n\geq 1$.
  \item  $\iinc_{P'}(x_{0})=\{x_1\}$, $\iinc_{P'}(x_{1})\subseteq \{x_0,x_2,x_4\}$, $\iinc_{P'}(x_2)\subseteq \{x_1,x_3,x_5,x_7\}$ and $\iinc_{P'} (x_4)\subseteq \{x_1,x_3,x_5,x_7\}$.
\end{enumerate}
\begin{proof}
\begin{enumerate}[$(a)$]
  \item Let $n\in \NN$. By inequalities (\ref{equ:inequality inducedpath}) stated above,  we have $x_{3n-2}<x_{3n}< x_{3n+2}$. Let $n'\in \NN$ be such that $n<n'$. By construction, $x_{3n}<x_{3n'}$. By inequalities (\ref{equ:inequality inducedpath}) again  we have $x_{3n'}<x_{3n'+2}$, hence  $x_{3n}<x_{3n'+2}$. Since $x_{3n}<x_{3n'}$ and $x_{3n'+1}$ is incomparable to $x_{3n'}$ we infer that $x_{3n'+1}\nleq x_{3n}$. We have  $d_G(x_{3n},x_{3n'})\geq 3$; indeed, if $n'=n+1$, $d_G(x_{3n},x_{3n'})=3$ by construction,  otherwise apply the first inequality of Lemma \ref{lem:inequality} with $x=x_{3n}$, $z=x_{3(n+1)}$ and $y=x_{3n'}$).  Since  $d_G(x_{3n},x_{3n'})\geq 3$ and $x_3$ is incomparable to $x_{3n+1}$, the vertices $x_{3n}$ and $x_{3n'+1}$ cannot be incomparable; it follows that $x_{3n}<x_{3n'+1}$.

Since  a poset and its dual have the same incomparability graph, we deduce that if $n'<n$, then $x_{3n'},x_{3n'+1}, x_{3n'+2}<x_{3n}$. Hence, $\iinc_{P'}(x_{3n})\subseteq \{x_{3n-1}, x_{3n+1}\}$ for $n\geq 1$.

        \item Since $x_{3n-3}<x_{3n}$ and $x_{3n}$ and $x_{3n+1}$ are incomparable we infer that $x_{3n+1}\nleqslant x_{3n-3}$. It follows that $x_{3n-3}<x_{3n+1}$ because otherwise $x_{3n-3},x_{3n+1},x_{3n}$ would be a path of length two contradicting our assumption that $d_G(x_{3n-3},x_{3n})=3$. From Lemma \ref{lem:inducedpath},  we deduce that if $k<3n-4$, then $x_k<x_{3n-3}$ and hence $x_k<x_{3n+1}$. Hence, if $k<3n-1$ and $x_k$ is incomparable to $x_{3n+1}$, then $k\in \{3n-4,3n-2\}$. Since $x_{3n+1}<x_{3n+3}$ it follows from Lemma \ref{lem:inducedpath} that if $k>3n+4$, then $x_k>x_{3n+4}$ and hence $x_k\not \in \iinc_{P'}(x_{3n+1})$. Hence, $x_{3n}$ and   $x_{3n+1}$ are possible elements incomparable to $x_{3n+1}$, hence the required inclusion.
    \item Since $x_{3n}<x_{3n+2}$ it follows from Lemma \ref{lem:inducedpath} that if $x_k$, for $k<3n$, is incomparable to $x_{3n+2}$ then $k\in \{3n-1,3n+3\}$. Now observe that $x_{3n+2}<x_{3n+6}$ because otherwise $x_{3n+3},x_{3n+2},x_{3n+6}$ is a path of length two contradicting $d_G(x_{3n+3},x_{3n+6})=3$. By duality we infer that if $k>3n+4$, then $x_k$ incomparable to $x_{3n+2}$ implies $k\in \{3n+5,3n+7\}$. The required inclusion readily follows.
  \item We have $x_0<x_3$ and $x_0<x_2$. Since $d_G(x_0,x_3)=3$ and $x_3$ incomparable to $x_4$ we must have $x_0<x_4$. From $\iinc_{P'}(x_{3})\subseteq \{x_{2}, x_{4}\}$ we deduce that $x_1$ is the only element incomparable to $x_0$. From $x_1<x_3$ we deduce that $\iinc_{P'}(x_1)\subseteq \{x_0\}\cup \iinc_{P'}(x_3)$ and therefore $\iinc_P( x_{1})\subseteq \{x_0,x_2,x_4\}$. From $x_2<x_6$ and $\iinc_P( x_{6})\subseteq \{x_{5}, x_{7}\}$ we derive $\iinc_{P'}(x_2)\subseteq \{x_1,x_3,x_5,x_7\}$. Similarly, we have $\iinc_{P'}(x_4)\subseteq \{x_1,x_3,x_5,x_7\}$.
  \end{enumerate} \hfill $\Box$

From Claim \ref{claim:inequalityG} and Claim \ref{claim:pathkonig}, $\inc(P')$ is connected, locally finite and has an infinite diameter. From K\H{o}nig's Lemma,  $G'$ contains an  infinite isometric path,  hence $G$ contains an infinite induced path. This completes the proof of $(1)$.
\end{proof}

\noindent {\bf Proof of (2) of Theorem \ref{thm:infinitepath-kite}.} We break the proof into two parts.
\begin{claim} \label{claim:part1}
If $G$ is a connected incomparability graph of infinite diameter  and if the set of vertices of degree at least $3$ in $G$ has infinite diameter, then $G$  contains an infinite induced path such that the set of vertices of this path with degree at least $3$ in $G$  has an infinite diameter. \end{claim}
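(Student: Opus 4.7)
The plan is to mirror the construction in the proof of (1), forcing the mile-marker vertices to lie in the set $V_3$ of vertices of $G$-degree at least three. Concretely, I would build a strictly increasing sequence $y_0 < y_1 < y_2 < \cdots$ in $V_3$ with $d_G(y_n, y_{n+1}) \geq 3$ for every $n$ and $d_G(y_0, y_n) \to \infty$; then for each $n$ fix an isometric $G$-path $\mathrm P_n$ from $y_n$ to $y_{n+1}$ of length $r_n := d_G(y_n, y_{n+1})$; finally apply K\H onig's lemma to $G' := G_{\restriction V'}$ with $V' := \bigcup_n V(\mathrm P_n)$ to extract an infinite isometric path $\pi$ of $G'$, which is automatically an infinite induced path of $G$ through infinitely many $y_n$'s.

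First, I would upgrade Lemma \ref{lem:oneside} to $V_3$: since $V_3$ has infinite $G$-diameter by hypothesis, the same bounded-ball argument shows that at least one of $V_3^+(x) := V_3 \cap \{v : v > x\}$ or $V_3^-(x) := V_3 \cap \{v : v < x\}$ is unbounded in $G$-distance from $x$, and the case analysis ($y \leq z$, $y \parallel z$, $z \leq y$) in the persistence part of Lemma \ref{lem:oneside} transfers verbatim to give: if $V_3^+(x)$ is unbounded and $z > x$ then $V_3^+(z)$ is unbounded. A straightforward induction, exactly parallel to the construction of the $x_{3n}$ in the proof of (1), then produces the chain $(y_n)$, and Lemma \ref{lem:inequality} applied inductively yields $d_G(y_n, y_{n+k}) \geq k + 2$ for all $k \geq 1$, so $d_G(y_0, y_n) \to \infty$.

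Next, $G'$ is clearly connected (consecutive $\mathrm P_n$'s share $y_{n+1}$) and has infinite $d_{G'}$-diameter, since $d_{G'}(y_0, y_n) \geq d_G(y_0, y_n) \to \infty$. For local finiteness, any $G$-adjacency between $v \in V(\mathrm P_n)$ and $w \in V(\mathrm P_m)$ gives $d_G(y_n, y_m) \leq r_n + 1 + r_m$, combined with $d_G(y_n, y_m) \geq |n-m|+2$ to force $|n-m| \leq r_n + r_m - 1$; each $\mathrm P_m$ being finite then bounds the $G'$-degree of any vertex in $V'$. K\H onig's Lemma (Theorem \ref{thn:konig}) then provides an infinite isometric path $\pi$ of $G'$, and being isometric (hence induced) in an induced subgraph, $\pi$ is an infinite induced path of $G$.

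The final and crucial step is to show that $\pi$ passes through infinitely many $y_n$'s (and therefore through infinitely many vertices of $V_3$), and has infinite $G$-diameter. The order-theoretic information from Lemma \ref{lem:inducedpath}, applied to each $\mathrm P_n$, places interior vertices of $\mathrm P_n$ either strictly below $y_{n+1}$ or strictly above $y_n$, and combined with the strict chain $y_0 < y_1 < \cdots$ together with the bound $d_G(y_n, y_m) \geq |n-m|+2$, a case analysis analogous to Claim \ref{claim:pathkonig} forbids any $G$-edge between $V(\mathrm P_n)$ and $V(\mathrm P_m)$ for $|n-m|$ large. Consequently, any infinite traversal in $G'$ that moves from $V(\mathrm P_n)$ to $V(\mathrm P_m)$ with $m \gg n$ must pass through $y_{n+1}, \ldots, y_m$, so $\pi$ contains infinitely many $y_n$'s; and since $\pi$ then has infinite $G$-diameter, Lemma \ref{ball-infinitepath} prohibits any finite $G$-ball from containing infinitely many vertices of $\pi$, so the infinite subset $\{y_n : n \in \NN\} \cap V(\pi) \subseteq V_3$ has infinite $G$-diameter. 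The hard part is precisely this final cross-adjacency analysis, since any additional $G$-edges between interior vertices of far-apart paths $\mathrm P_n, \mathrm P_m$ would allow the K\H onig path to bypass the $y_k$'s; the order structure of Lemma \ref{lem:inducedpath} is what prevents this.
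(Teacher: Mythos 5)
Your overall skeleton (an increasing chain $y_0<y_1<\cdots$ of degree-$\geq 3$ vertices, geodesics $\mathrm P_n$ between consecutive ones, K\H{o}nig's lemma on the induced union) is the paper's, and your construction of the chain via an upgraded Lemma \ref{lem:oneside} is a workable variant of the paper's $I/F$ decomposition. But two steps are genuinely broken. First, local finiteness: your inequality $|n-m|\leq r_n+r_m-1$ does not bound $m$ for fixed $n$, because $r_m=d_G(y_m,y_{m+1})$ is unbounded in $m$ --- unlike in the proof of part (1), where each new vertex is chosen at distance exactly $3$, here the degree-$\geq 3$ vertices may be sparse and you can only guarantee $r_m\geq 3$, not $r_m\leq C$; the inequality is vacuous as soon as $r_m\geq |n-m|$. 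The paper instead proves local finiteness order-theoretically (Subclaim \ref{subclaim:part1}): for $m\geq n+2$, \emph{every} vertex of $\mathrm P_n$ is comparable to every vertex of $\mathrm P_m$, via Lemma \ref{lem:inducedpath}, transitivity along the chain $y_n<y_{n+1}<\cdots$, and $d_G(y_{n+1},y_m)\geq 4$. You defer exactly this analysis to your last paragraph ``for $|n-m|$ large'' without carrying it out, yet it is needed already here and with the precise threshold $|n-m|\geq 2$.

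Second, and more seriously, your claim that the extracted isometric path $\pi$ must pass through infinitely many $y_k$'s is false. Forbidding edges between $\mathrm P_n$ and $\mathrm P_m$ for $|n-m|\geq 2$ does not forbid edges between interior vertices of \emph{consecutive} geodesics: for instance $z_{(n,r_n-1)}$ and $z_{(n+1,1)}$ are both incomparable to $y_{n+1}$, and nothing in Lemma \ref{lem:inducedpath} forces them to be comparable to each other; if they are incomparable, $\pi$ can cut the corner $\ldots,z_{(n,r_n-1)},z_{(n+1,1)},\ldots$ and skip $y_{n+1}$ entirely. Your terminal set $\{y_n:n\in\NN\}\cap V(\pi)$ could therefore be finite or empty, and the conclusion collapses. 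The paper's proof does not try to force $\pi$ through the $y_k$'s: it uses only that $\pi$ meets infinitely many $\mathrm P_i$'s, and shows that for each such $i$ the vertex $z_{(i,j_i)}$ of $\mathrm P_i$ with largest index lying on $\pi$ has degree at least $3$ in $G$ --- it is either an endpoint $y_i$ or $y_{i+1}$ (degree $\geq 3$ by choice), or an interior vertex of the induced path $\mathrm P_i$ which, since $\pi$ is infinite and $z_{(i,j_i+1)}\notin V(\pi)$, must have a further neighbour on $\pi$ outside $\mathrm P_i$. That identification of degree-$3$ vertices on $\pi$ \emph{other than} the $y_k$'s is the missing idea in your proposal.
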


\noindent{\bf Proof of Claim \ref{claim:part1}.}
Let $x$ be any vertex  in $G$,   $I:=\iinc_P(x)\cup \downarrow x$ and $F:=\iinc_P(x)\cup \uparrow x$.  According to Theorem \ref{thm:orderconvex}, $I$ and $F$ are order convex and $G_{\restriction I}$ and $G_{\restriction F}$ are isometric subgraphs of $G$. Since, trivially,  $V(G)=I\cup F$,  every vertex of degree at least $3$ belongs to  $I$ or to $F$. Since the diameter in $G$ of the set of vertices of degree at least $3$ is infinite and $G_{\restriction I}$ and $G_{\restriction F}$ are isometric subgraphs we infer that the diameter in $G_{\restriction I}$ or in $G_{\restriction F}$  of the set of vertices of degree at least $3$ is infinite.  Choose  $y$ of degree at least $3$. We may assume without loss of generality that the diameter in $G_{\restriction F}$  of the set of vertices of degree at least $3$ is infinite. We start by showing that $P$ contains an infinite chain of elements whose degree is at least $3$ in $G$. Suppose constructed a sequence $y_0:=y<y_1<\dots <y_{n-1}$ of vertices of degree at least $3$ such that $d_G(y_i,y_{i+1})> 3$ for all $i\leq n-2$. Let $y_n>x_0$ be a vertex of degree at least $3$ such that $d_G(y_{n-1},y_{n})>\sum^{n-2}_{j=0}d_G(y_j,y_{j+1})$. This choice of $y_n$ is possible since the diameter in $G_{\restriction F}$ of the set of vertices of degree at least $3$ is infinite. Then $y_{n-1}$ and $y_n$ are comparable in $P$. It follows from Corollary \ref{lem:monotone-distance} that $y_{n-1}<y_n$. Hence, the sequence  $(y_i)_{i\in \NN}$ forms a chain in $P$. For all $n\in \NN$,  let $\mathrm P_n:=z_{(n,0)},z_{(n,1)},...,z_{(n,r_n)}$  be a path  in $G$ of length $r_n:=d_G(y_n,y_{n+1})$ joining $y_n$ and $y_{n+1}$. The graph  $G':=G_{\restriction \cup_{i\in \NN}V(P_i)}$ is connected and  has infinite diameter.
\begin{subclaim}\label{subclaim:part1}
$G'$ is locally finite.\end{subclaim}
\noindent{\bf Proof of Subclaim \ref{subclaim:part1}.}
It suffices to prove that for $n+2\leq m$, every vertex of $\mathrm P_n$ is  comparable to every vertex of $\mathrm P_{m}$. Let $z_{n, i}\in \mathrm P_n$ and $z_{m, j}\in \mathrm P_n$.

$\bullet$  Suppose  first $i= r_{n}-1$.
\begin{enumerate}[{(a)}]
 \item $z_{(n, r_n-1)}\leq y_{m}, z_{(m,1)}$.
 Indeed,   $z_{(n,r_n-1)}$ and $y_{m}$ are comparable, otherwise $y_{n+1}$, $z_{(n, r_n-1)}$, $y_{m}$ form a path with extremities $y_{n+1}$ and $y_{m}$ hence $d_G(y_{n+1}, y_{m})\leq 2$. This is impossible since $d_G(y_{n+1}, y_{m})\geq d_G(y_{n+1}, y_{n+2}) \geq 4$. Furthermore, $z_{(n, r_n-1)}<y_{m}$, otherwise, since $y_{n+1} <y_m$, we obtain $y_{n+1} < z_{(n, r_n-1)}$ by transitivity, while these vertices are incomparable. Similarly, $z_{(n, r_n-1)}$ and $z_{(m, 1)}$ are comparable otherwise $y_{n+1}$, $z_{(n, r_n-1)}$, $z_{(m, 1)} $, $y_{m}$ form a path with extremities $y_{n}$ and $y_{m}$ hence $d_G(y_{n+1}, y_{m})\leq 3$, while this distance is at least $4$. Necessarily, $z_{(n, r_n-1)}<z_{(m,1)}$, otherwise since $z_{(n, r_n-1)}< y_{m}$, we have $z_{m,1}< y_m$ which is impossible.

 \item By symmetry, $y_{n+1}, z_{(n, r_n-1)}\leq z_{m,1}$.

\item  $z_{(n, r_n-1)}\leq z_{(m,j)}$. We just proved it for $j=0, 1$. If $j>1$, this follows from $y_m<z_{m, i}$ by transitivity.

\end{enumerate}

$\bullet$Next, suppose  $i= r_n$.
In this case  $z_{(n,i)}= y_{n+1}$. If $j\geq 2$, we have $z_{(n,i)}= y_{n+1}<y_{m}= z_{(m, 0)} <z_{(m, j)}$. If $j=1$, this is just item (c) above.

$\bullet$ Finally, suppose that $i<r_{n}-1$. In this case,  $z_{(n, i)} <y_{n+1}<z_{(m,j)}$. \hfill $\Box$

Since $G'$ is connected, locally finite and has an infinite diameter,  K\H{o}nig's Lemma ensures that it contains an  infinite isometric path $\mathrm P_\infty$. We claim that $\mathrm P_\infty$ contains an infinite number of vertices of degree at least $3$ in $G$. Clearly, $V(\mathrm P_\infty)$ meets infinitely many $\mathrm P_i$'s. For each $i\in \NN$ let $j_i\in V(\mathrm P_i)$ be the largest such that $z_{(i,j_i)}\in V(\mathrm P_\infty)$. Then the degree of $z_{(i,j_i)}$ is at least $3$ in $G$. Indeed, if $z_{(i,j_i)}\in \{y_i,y_{i+1}\}$, then we are done. Otherwise $z_{(i,j_i)}$ is not an end vertex of $\mathrm P_i$. Then $z_{(i,j_i)}$ must have a neighbour in $\mathrm P_{\infty}$ which is not in $\mathrm P_i$ and therefore must have degree three. So far we have proved that $G'$ contains an  infinite isometric path $\mathrm P_\infty$ containing infinitely many vertices of degree at least $3$. Hence, $G$ contains an  infinite induced path $\mathrm P_\infty$ containing infinitely many vertices of degree at least $3$. This proves our claim. \hfill $\Box$

\begin{claim}\label{claim:part2}If  $G$ is a connected incomparability graph  containing an infinite induced path such that the set of vertices of this path with degree at least $3$ in $G$  has an infinite diameter    then $G$ contains either a caterpillar or a kite.
\end{claim}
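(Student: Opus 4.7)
Let $\mathrm P_\infty=(x_i)_{i\in\NN}$ be the given infinite induced path and let $S:=\{i\in\NN:\deg_G(x_i)\ge 3\}$, which is infinite by hypothesis.  For each $i\in S$, pick $y_i\in N_G(x_i)\setminus V(\mathrm P_\infty)$; this exists because $x_i$ has at most two neighbours on $\mathrm P_\infty$.  For any off-path $y$ write $\pi(y):=\{j\in\NN:y\sim_G x_j\}$ for its \emph{trace} on the path.  The plan is to use pigeonhole to force a common trace pattern for the witnesses $y_i$, and then read off a comb or a kite from the resulting uniform structure.

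First I would establish two structural restrictions on $\pi(y)$.  By the absence of induced cycles of length at least $5$ in incomparability graphs (see the remark after Lemma~\ref{lem:inducedpath}), two consecutive elements of $\pi(y)$ differ by at most $2$: otherwise $y$ together with the intervening path segment would induce a cycle of length $\ge 5$.  By Lemma~\ref{lem:inducedpath}, path vertices with indices differing by at least $2$ are comparable in $P$, so the order-convexity of $B_G(y,1)$ given in Theorem~\ref{thm:orderconvex}(a) forces $[\min\pi(y)+2,\max\pi(y)-2]\subseteq\pi(y)$.  Together, these restrictions cut the possible shapes $D:=\pi(y)-\min\pi(y)$ down to a short list in which the targets $\{0\}$, $\{0,1\}$, $\{0,2\}$ and $\{0,1,2\}$ correspond exactly to the comb and the three kite templates.

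Next I would apply pigeonhole to the pair (shape of $\pi(y_i)$, relative position of $i$ inside $\pi(y_i)$) to extract an infinite $S'\subseteq S$ on which all $y_i$ have the same translated shape.  Sparsifying $S'=\{i_0<i_1<\cdots\}$ so that $i_{k+1}-i_k$ exceeds the diameter of $D$ by at least $4$ makes the traces $\pi(y_{i_k})$ pairwise disjoint.  A short transitivity argument then shows that $\{y_{i_k}\}$ is a chain in $P$: since $y_{i_k}\parallel x_{\max\pi(y_{i_k})}$, any $j\ge\max\pi(y_{i_k})+2$ satisfies $y_{i_k}<x_j$ (otherwise $y_{i_k}>x_j>x_{\max\pi(y_{i_k})}$ would force the forbidden $y_{i_k}>x_{\max\pi(y_{i_k})}$), and symmetrically $y_{i_l}>x_j$ when $j\le\min\pi(y_{i_l})-2$; hence $y_{i_k}<x_j<y_{i_l}$ and so $y_{i_k}<y_{i_l}$ whenever $k<l$, making $\{y_{i_k}\}$ an independent set of $G$.

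Finally I would finish by inspecting $D$: if $|D|=1$, the induced subgraph on $V(\mathrm P_\infty)\cup\{y_{i_k}\}$ is a comb; if $D$ equals (up to a shift) $\{0,1\}$, $\{0,2\}$ or $\{0,1,2\}$, one reads off a kite of type $(1)$, $(3)$ or $(2)$ respectively; in every case $\mathrm P_\infty$ serves as the spine of infinite diameter and the $x_{i_k}$ provide infinitely many vertices of degree $\ge 3$.  I expect the main obstacle to be the exotic shapes that survive the two constraints above, for instance $\{0,1,3\}$, $\{0,2,3\}$, or intervals of length $\ge 3$: for these, my plan is to exchange the relevant segment $x_{\min\pi(y)},\ldots,x_{\max\pi(y)}$ of $\mathrm P_\infty$ for the shorter induced path $x_{\min\pi(y)},y,x_{\max\pi(y)}$ (which is induced by Lemma~\ref{lem:inducedpath}), thereby absorbing $y$ into the spine and obtaining a new infinite induced path of $G$ whose set of degree-$\ge 3$ vertices is still of infinite diameter; iterating this reduction until only the basic shapes remain completes the argument.
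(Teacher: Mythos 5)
Your overall strategy coincides with the paper's: pick an off-path witness for each degree-$\ge 3$ path vertex, use the fact that balls of radius $1$ meet the path in only finitely many vertices (Lemma \ref{ball-infinitepath2}, applicable because the path has infinite diameter, which follows from the hypothesis on its degree-$\ge 3$ subset) to make the witnesses' traces pairwise disjoint and far apart, check that the witnesses form a chain of $P$ and hence an independent set of $G$, and read off a comb or a kite from the trace pattern. Your two structural observations on a trace $\pi(y)$ --- consecutive gaps at most $2$ via the absence of induced cycles of length at least $5$, and $[\min\pi(y)+2,\max\pi(y)-2]\subseteq\pi(y)$ via order convexity of $B_G(y,1)$ --- are correct, and the chain argument for the $y_{i_k}$ is sound.

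There is, however, a genuine gap where the two proofs diverge. Your pigeonhole is applied to the exact translated shape of $\pi(y_i)$, and the set of possible shapes is infinite: nothing bounds $\max\pi(y)-\min\pi(y)$ uniformly over the witnesses, so an infinite family of witnesses may realise infinitely many distinct shapes and the pigeonhole extracts no infinite uniform subfamily. You recognise that the ``exotic'' shapes (trace diameter at least $3$) must be dealt with, but the proposed remedy --- replace the segment $x_{\min\pi(y)},\dots,x_{\max\pi(y)}$ of the spine by $x_{\min\pi(y)},y,x_{\max\pi(y)}$ and ``iterate until only basic shapes remain'' --- is not yet an argument: each absorption yields a new infinite induced path whose off-path witnesses must be chosen and analysed afresh, new exotic traces can appear at every stage, and no quantity is shown to decrease, so there is no reason the process terminates or stabilises. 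The paper sidesteps this by pigeonholing over a \emph{finite} list of six types (one neighbour; two consecutive neighbours; two neighbours at distance two; three consecutive neighbours; and a single catch-all type for traces of diameter at least $3$, split only according to whether $x_{l(n)+1}$ is a neighbour of $y_n$), and, for the catch-all type, by deleting the interior vertices $x_{l(n)+2},\dots,x_{r(n)-1}$ of every trace interval from the path in one simultaneous surgery, which directly exhibits a kite or a caterpillar with no iteration. Replacing your exact-shape pigeonhole by such a coarse finite classification, together with a one-shot treatment of the large traces, would close the gap.
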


\noindent{\bf Proof of Claim \ref{claim:part2}.}
Let $(x_n)_{n\in \NN}$ be a sequence of vertices of $G$  with $(x_n, x_{n+1})\in E(G)$ for $n\in \NN$ forming an infinite induced path $\mathrm P_{\infty}$.  Suppose that this path contains infinitely many vertices with degree at least $3$ in $G$ forming a set of infinite diameter in $G$.
\begin{subclaim}\label{subclaim:part2}There is an infinite sequence $(y_n)_{n}$   of  vertices in $V\setminus \mathrm P_{\infty}$ forming an independent set and a family of disjoint intervals $I_n:= [l(n), r(n)]$ of $\NN$ such that  $\{l(n), r(n)\}  \subseteq B_G(y_n, 1)\cap \mathrm P_{\infty} \subseteq I_n$ for all $n\in \NN$.
\end{subclaim}
\noindent{\bf Proof of Subclaim \ref{subclaim:part2}.} Pick $x_{i_0}\in \mathrm P_{\infty}$ with degree at least $3$ in $G$ and set $y_0$ arbitrary in $B_G(x_{i_0}, 1)\setminus \mathrm P_{\infty}$. According to Lemma \ref{ball-infinitepath2} the ball $ B_G(y_0, 1)$ contains only finitely many vertices of $\mathrm P_{\infty}$. Let $l(0)$, resp., $r(0)$  be the  least, resp.,  the largest  integer $k$ such that  $x_k\in B_G(y_0, 1)$.
Let $n >0$. Suppose $(y_m)_m$, $I_m:= [l(m), r(m)]$ be defined for $m<n$.
 By Lemma \ref{ball-infinitepath2}, $\mathrm P_{\infty} \cap (\bigcup_{m<n}B_G(y_m, 2))$ is finite, hence there is a vertex  $x_{i_n}\in \mathrm P_{\infty}$  with degree at least $3$ such that every vertex in the  infinite subpath of $\mathrm P_{\infty}$ starting at $x_{i_n}$ is at distance at least $3$ of any $y_m$. Pick $y_n\in B(x_{i_n}, 1)\setminus \mathrm P_{\infty}$ and set $I_n=[l(n), r(n)]$ where $l(n)$, resp., $r(n)$  be the  least, resp.,  the largest  integer $k$ such that  $x_k\in B_G(y_n, 1)$. \hfill $\Box$

 In order to complete the proof of Claim \ref{claim:part2} we show that the graph $G'$ induced on $\mathrm P_{\infty} \bigcup \{y_n: n\in \NN\}$ contains a caterpillar or a kite. For that, we classify the vertices $y_n$.
We say that $y_n$ has \emph{type} $(0)$ if $l(n)=r(n)$ (that is $y_n$ has just one neighbour on $\mathrm P_{\infty}$.
If the set $Y_0$ of  vertices of type $(0)$ is infinite then trivially $G_{\restriction \mathrm P_{\infty} \bigcup Y_0}$ is a   caterpillar (see Figure \ref{fig:comb-kite}). We say that $y_n$ has \emph{type} $(1)$ if  $r(n)= l(n)+1$. Again, trivially, if the set $Y_1$ of  vertices of type $(1)$ is infinite then  $G_{\restriction \mathrm P_{\infty} \bigcup Y_1}$ is a   kite of \emph{type} $(1)$. We say that $y_n$ has \emph{type} $(2)$ if $r(n)=l(n)+2$. It has \emph{type} $(2.1)$ if  $(y(n), x_{l(n)+1})\in E(G)$ while it has \emph{type} $(2.2)$ if $(y(n), x_{l(n)+1})\not \in  E(G)$.  If for $i=1,2$ the set $Y_{2.i}$ of  vertices of type $(2.i)$ is infinite then  $G_{\restriction \mathrm P_{\infty} \bigcup Y_{2.i}}$ is a   kite of type $(i+1)$ (see Figure \ref{fig:comb-kite}). We say that $y_n$ has type $(3)$ if $r(n)\geq l(n)+3$. It has type $(3.1)$ if $(y(n), x_{l(n)+1})\in E(G)$ while it has type $(3.2)$ if  $(y(n), x_{l(n)+1})\not \in E(G)$. If the set  $Y_{3.i}$ of  vertices of type $3.i$ is infinite delete from $\mathrm P_{\infty}$ the set $Y:= \bigcup_{n\in Y_{3.i}}\{x_{m}: m\in \{l(n+2,\dots,  r(n)-1 \}$. Then   $G_{\restriction (\mathrm P_{\infty} \bigcup Y_{3.i})\setminus Y}$ is  a   kite of type $(2)$ if $i=1$ or a caterpillar if $i=2$ (see Figure \ref{fig:comb-kite}).\hfill $\Box$

\section{Example  \ref{thm:noisometric}} \label{section:proof-thm:noisometric}

We define the poset satisfying the conditions stated in  Example \ref{thm:noisometric}.

For a poset $P=(V,\leq)$ we set for every $x\in V$ we set $\iinc_P(x):= \{y\in V: x\parallel y\}$.\\

Let $P:=(X,\leq)$ be the poset defined on $X:=\NN\times \NN \times \{0,1\}$ as follows. We let $(m,n,i)\leq (m',n',i')$ if
\[i=i' \mbox{ and } [ n<n' \mbox{ or } (n=n' \mbox{ and } m\leq m') ],\]
\[\mbox{or}\]
\[i\neq i' \mbox{ and } [n+1<n' \mbox{ or } (n+1=n' \mbox{ and } m\leq m')].\]
We set $A_n:=\{(m,n,1) : m\in \NN\}$ for all $n\geq 0$ and $B_n:=\{(m,n,0) : m\in \NN\}$ and note that $\cup_{n\in \NN}A_n$ and $\cup_{n\in \NN} B_n$ are two total orders of order type $\omega^2$. In particular $P$ is coverable by two chains and hence has width two. \\
\textbf{Claim 1:} $\leq$ is an order relation.\\
Reflexivity and antisymmetry are obvious. We now prove that $\leq$ is transitive. Let $(m,n,i)$, $(m',n',i')$, $(m'',n'',i'')$ be such that $(m,n,i)\leq (m',n',i')\leq (m'',n'',i'')$.  Note that since $\{i,i',i''\}\subseteq \{0,1\}$ at least two elements of $\{i,i',i''\}$ are equal. if $i=i'=i''$ then clearly $(m,n,i)\leq (m'',n'',i'')$. Next we suppose that there are exactly two elements of $\{i,i',i''\}$ that are equal. There are three cases to consider.\\
$\bullet$ $i=i'$.\\
Since $(m,n,i)\leq (m',n',i')$ we have
\begin{equation}\label{1 i=i'}
  n<n' \mbox{ or } (n=n' \mbox{ and } m\leq m').
\end{equation}
Since $i'\neq i''$ and $(m',n',i')\leq (m'',n'',i'')$ we have
\begin{equation}\label{2 i' not i''}
n'+1<n'' \mbox{ or } (n'+1=n'' \mbox{ and } m'\leq m'').
\end{equation}

If $n+1<n''$, then since $i\neq i''$ it follows that $(m,n,i)\leq (m'',n'',i'')$. Suppose $n''\leq n+1$. If $n'+1<n''$, then $n'<n$. It follows from (\ref{1 i=i'}) that $n=n'$ and hence $n+1<n''$ proving that $(m,n,i)\leq (m'',n'',i'')$. Else, $n''\leq n'+1$. It follows from (\ref{2 i' not i''}) that $n'+1=n''$ and $m'\leq m''$. If $n<n'$, then $n+1<n''$ and once again we have $(m,n,i)\leq (m'',n'',i'')$. Otherwise it follows from (\ref{1 i=i'}) that $n=n'$  and $m\leq m'$. Hence, $n+1=n''$ and $m\leq m''$ proving that $(m,n,i)\leq (m'',n'',i'')$. \\

$\bullet$ $i=i''$.\\
Since $(m,n,i)\leq (m',n',i')$ and $i\neq i'$ we have
\begin{equation}\label{3 i not i'}
n+1<n' \mbox{ or } (n+1=n' \mbox{ and } m\leq m').
\end{equation}

Since $(m',n',i')\leq (m'',n'',i'')$ and $i'\neq i''$ we have
\begin{equation}\label{4 i' not i''}
n'+1<n'' \mbox{ or } (n'+1=n'' \mbox{ and } m'\leq m'').
\end{equation}
We prove that $n<n''$. We suppose $n''\leq n$ and we argue to a contradiction. We Claim that none of $n+1<n'$ and $n'+1<n''$ can hold. Indeed, suppose $n+1<n'$. Then $n''<n'$ and hence $n'+1<n''$ cannot be true. It follows from (\ref{4 i' not i''}) that $n'+1=n''$. But then $n''=n'+1>n'>n''$ which is impossible. Now suppose $n'+1<n''$. Then $n'+1<n<n+1<n'$ and this is impossible. It follows from (\ref{3 i not i'}) and (\ref{4 i' not i''}) that  $n+1=n' \mbox{ and } m\leq m'$ and $n'+1=n'' \mbox{ and } m'\leq m''$. Hence, we have proved our claim that none of $n+1<n'$ and $n'+1<n''$ can hold. It follows from (\ref{3 i not i'}) and (\ref{4 i' not i''}) that $n+1=n'$ and $n'+1=n''$, and in particular $n+2=n''$. This contradicts $n''\leq n$. Hence, $n<n''$ and therefore $(m,n,i)\leq (m'',n'',i'')$ since $i=i''$.\\
$\bullet$ $i'=i''$.\\
Since $(m,n,i)\leq (m',n',i')$ and $i\neq i'$ we have
\begin{equation}\label{5 i not i'}
n+1<n' \mbox{ or } (n+1=n' \mbox{ and } m\leq m').
\end{equation}

\noindent Since $(m',n',i')\leq (m'',n'',i'')$ and $i'= i''$ we have
\begin{equation}\label{6 i'=i''}
  n'<n'' \mbox{ or } (n'=n'' \mbox{ and } m'\leq m'').
\end{equation}
If $n+1<n''$, then $(m,n,i)\leq (m'',n'',i'')$ since $i\neq i''$. We Claim that none of $n+1<n'$ and $n'<n''$ can hold. Suppose $n+1<n'$. Then $n''<n$ and it follows from (\ref{6 i'=i''}) that $n'=n''$. But then $n''\leq n+1<n'=n''$ which is impossible. Suppose $n'<n''$. Then $n'<n+1$ and it follows from (\ref{5 i not i'}) that $n+1=n'$. But then $n+1=n'<n''<n+1$ which is impossible. Hence, none of $n+1<n'$ and $n'<n''$ can hold. It follows from (\ref{5 i not i'})  and (\ref{6 i'=i''}) that $(n+1=n' \mbox{ and } m\leq m')$ and $(n'=n'' \mbox{ and } m'\leq m'')$. Therefore, $(n+1=n'' \mbox{ and } m\leq m'')$ proving that  $(m,n,i)\leq (m'',n'',i'')$ as required.\\
\textbf{Claim 2:} Let $j\in \NN$. Then for all $x\in A_j$, $|B_{\inc(P)}(x,1)\cap B_{j+1}|$ is finite.\\
Let $x:=(m,j,1)\in A_j$. Then $B_{\inc(P)}(x,1)\cap B_{j+1}=\{(k,j+1,0): 0\leq k\leq m-1\}$.\\
\textbf{Claim 3:} Let $j\in \NN$. Then for all $x\in B_j$, $|B_{\inc(P)}(x,1)\cap A_{j+1}|$ is finite.\\
Let $x:=(m,j,0)\in B_j$. Then $B_{\inc(P)}(x,1)\cap A_{j+1}=\{(k,j+1,1): 0\leq k\leq m-1\}$.\\
\textbf{Claim 4:} Let $j\in \NN$. Then for all $x\in A_j$ and for all $y\in B_{\inc(P)}(x,1)\cap B_{j+1}$, $|B_{\inc(P)}(y,1)\cap A_{j+2}|<|B_{\inc(P)}(x,1)\cap B_{j+1}|$.\\
Let $x:=(m,j,1)\in A_j$. It follows from Claim 2 that $|N(x)\cap B_{j+1}|=m$. Let $y\in B_{\inc(P)}(x,1) \cap B_{j+1}$, say $y=(m',j+1,0)$ and note that $m'<m$. Then it follows from Claim 3 that $|B_{\inc(P)}(x,1)\cap B_{j+2}|=m'$. Since $m'<m$ we are done.\\
\textbf{Claim 5:} Let $j\in \NN$. Then for all $x\in B_j$ and for all $y\in B_{\inc(P)}(x,1)\cap A_{j+1}$, $|B_{\inc(P)}(y,1)\cap B_{2+1}|<|B_{\inc(P)}(x,1)\cap A_{j+1}|$.\\
Symmetry and Claim 4.\\
\textbf{Claim 6:} If there exists and infinite isometric path $(x_i)_{i\in \NN}$ in $\inc(P)$ starting at $x_0=(0,0,1)$, then $x_{2n}\in A_{2n-1}$ and $x_{2n+1}\in  B_{2n}$.\\
From $B_0=\iinc_P(x_0):= \{y\in X: y\; \text{incomparable to}\; x \; \text{in}\; P\}$ follows that $x_1\in B_0$. Suppose for a contradiction that $x_2\not \in A_1$. Then $x_2\in A_0$. In this case $x_3\in B_1$ (this is because $x_0<x_3$). But then $x_4\in A_2$ because otherwise $x_4\in A_1$ and hence the distance from $x_0$ to $x_4$ would be two which is not possible. By the same token $x_5\in B_3$ and more generally $x_{2n+1}\in B_{2n-1}$ and $x_{2n-2}\in A_{n}$. This is impossible. Indeed, suppose $x_2=(i,0,0)$ then $x_3=(j,1,1)$ with $j<i$ and then $x_4=(k,2,0)$ with $k<j$. Continuing this way we have a decreasing sequence of nonnegative integers.\\
\textbf{Claim 7:} Let $y\in B_{\inc(P)}(x_0,1)\cap B_0$. Then the lengths of isometric paths starting at $x_0$ and going through $y$ is bounded.\\
Follows from Claims 4, 5 and 6.\\
We conclude that there is no isometric path in $\inc(P)$ starting at $(0,0,1)$. It follows from Theorem \ref{thm:polat} that $\inc(P)$ has no isometric infinite path.

\section{Interval orders: A proof of Theorem \ref{thm:intervalorder-isometric} and Example \ref{thm:intervalorder-non-isometric}}\label{section:intervalorders}

We recall that an order $P$ is an {\it interval order} if
$P$ is isomorphic to a subset $\mathcal J$ of the set $Int(C)$ of
non-empty intervals of a chain $C$, ordered as follows: if $I, J\in
Int(C)$, then
\begin{equation}\label{ordre-sur-intervalles-recall}
I<J \mbox{  if  } x<y  \mbox{  for every  } x\in I  \mbox{  and
every  } y\in J.
\end{equation}

The following proposition encompasses  some known equivalent properties of interval orders. Its proof is easy and is left to the reader.

\begin{proposition}Let $P:=(V,\leq)$ be a poset. The following propositions are equivalent.
\begin{enumerate}[(i)]
  \item $P$ is an interval order.
  \item $P$ does not embed $2\oplus 2$.
  \item The set $\{(\downarrow{x})\setminus \{x\} : x\in V\}$ is totally ordered by set inclusion.
  \item The set $\{(\uparrow{x})\setminus \{x\} : x\in V\}$ is totally ordered by set inclusion.
\end{enumerate}
\end{proposition}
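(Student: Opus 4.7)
My plan is to establish (ii) $\Leftrightarrow$ (iii) by a direct bookkeeping argument, (ii) $\Leftrightarrow$ (iv) by a duality remark, (i) $\Rightarrow$ (ii) by a short pointwise argument from any interval representation, and finally (ii) $\Rightarrow$ (i) via Fishburn's classical construction.

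For (ii) $\Leftrightarrow$ (iii), write $D_v:=(\downarrow v)\setminus\{v\}$. If (iii) fails, choose $x,y\in V$ and $a\in D_x\setminus D_y$, $b\in D_y\setminus D_x$. Then $a<x$, $b<y$, $a\not\leq y$, $b\not\leq x$. Any remaining comparability among $\{a,b,x,y\}$ (for instance $x\leq y$, $y\leq x$, $a\leq b$, $b\leq a$) chained transitively with $a<x$ or $b<y$ would force either $a\leq y$ or $b\leq x$, contradicting the choice of $a,b$. Hence $\{a,b,x,y\}$ realizes $2\oplus 2$. Conversely, an embedded $2\oplus 2$ with bottoms $a,b$ and tops $x,y$ immediately yields $a\in D_x\setminus D_y$ and $b\in D_y\setminus D_x$, so $D_x$ and $D_y$ are incomparable under $\subseteq$.

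For (ii) $\Leftrightarrow$ (iv), apply the equivalence (ii) $\Leftrightarrow$ (iii) to the dual poset $P^*$. Since $2\oplus 2$ is self-dual, (ii) for $P$ is equivalent to (ii) for $P^*$; and (iii) for $P^*$ coincides with (iv) for $P$, because $(\uparrow v)\setminus\{v\}$ in $P$ equals $(\downarrow v)\setminus\{v\}$ in $P^*$.

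For (i) $\Rightarrow$ (ii), argue by contradiction. Let $v\mapsto I_v$ be an interval realization and suppose $\{a,b,x,y\}$ is a copy of $2\oplus 2$ with $a<x$, $b<y$ and the four cross pairs incomparable. From $I_a\parallel I_y$ pick $\alpha\in I_a$, $\eta\in I_y$ with $\eta\leq\alpha$; from $I_b\parallel I_x$ pick $\beta\in I_b$, $\xi\in I_x$ with $\xi\leq\beta$. Applying $I_a<I_x$ and $I_b<I_y$ pointwise gives $\alpha<\xi$ and $\beta<\eta$. Chaining, $\eta\leq\alpha<\xi\leq\beta<\eta$, a contradiction.

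For (ii) $\Rightarrow$ (i), this is Fishburn's classical construction. Using the chains $\mathcal D:=\{D_v:v\in V\}$ and $\mathcal U:=\{U_v:v\in V\}$ granted by (iii) and (iv), one assembles a chain $C$ and a map $v\mapsto I_v$ whose left endpoint reflects $D_v$ and whose right endpoint reflects $U_v$, in such a way that $P$ embeds into $\mathrm{Int}(C)$; the key identity used throughout is $v<w$ in $P$ $\Leftrightarrow$ $D_v\cup\{v\}\subseteq D_w$ $\Leftrightarrow$ $U_w\cup\{w\}\subseteq U_v$. The main obstacle lies in this final implication: one must linearly order the combined family of left and right endpoints so that strict precedence of intervals in $\mathrm{Int}(C)$ exactly matches the strict order of $P$. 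The other implications reduce to short transitivity arguments.
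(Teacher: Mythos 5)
The parts you actually carry out are correct: (ii) $\Leftrightarrow$ (iii), its dual (ii) $\Leftrightarrow$ (iv), and (i) $\Rightarrow$ (ii) are all sound. (One small point in the first of these: $a\notin D_y$ only rules out $a<y$, not $a=y$, so you should also note that $a\neq y$ and $b\neq x$; e.g.\ if $a=y$ then $y<x$ forces $D_y\subseteq D_x$, contradicting the existence of $b\in D_y\setminus D_x$. Likewise $a\ne b$ and $x\ne y$ follow from the same choices. These are one-line repairs.) Since the paper leaves the whole proof to the reader, the only issue is completeness of your argument.

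The genuine gap is (ii) $\Rightarrow$ (i), which is the entire substance of the proposition (Fishburn's theorem): you name the construction, state the identity it should satisfy, and then explicitly defer ``the main obstacle'' --- linearly ordering the combined family of endpoints --- which is precisely the content of the implication. To close it: let $C$ be the disjoint union of two copies of $V$, written $\{\ell_v: v\in V\}\cup\{r_v: v\in V\}$, quasi-ordered by $\ell_v\preceq\ell_w$ iff $D_v\subseteq D_w$, $r_v\preceq r_w$ iff $U_w\subseteq U_v$, $r_w\prec\ell_v$ iff $w<v$, and $\ell_v\preceq r_w$ iff $w\not<v$. Totality is immediate from (iii) and (iv) (which you have already derived from (ii)); each transitivity case reduces to a containment such as $w\in D_{v'}\subseteq D_v$ or $v\in U_w\subseteq U_{w'}$ --- for instance, if $\ell_v\preceq r_w\prec\ell_{v'}$ then $w\not<v$ and $w<v'$, and if $D_{v'}\subsetneq D_v$ held we would get $w\in D_{v'}\subseteq D_v$, i.e.\ $w<v$, a contradiction, so $D_v\subseteq D_{v'}$ by (iii). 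Passing to the quotient chain and setting $I_v:=[\ell_v,r_v]$ (nonempty because $v\not<v$ gives $\ell_v\preceq r_v$), one gets $I_v<I_w$ in $\mathrm{Int}(C)$ iff $r_v\prec\ell_w$ iff $v<w$, which is the required interval representation. Without some such verification the proposal does not establish (ii) $\Rightarrow$ (i).
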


\begin{lemma}\label{lemma:neigbour-antichain}Let $P=(V,\leq)$ be an interval order and $x\in V$. Then the neighbours of $x$ (in $\inc( P)$) that lay on an induced path of length at least two in $\inc( P)$  and starting at $x$ and whose vertices are in $\iinc_P( x)\cup \uparrow x$ form an antichain in $P$.
\end{lemma}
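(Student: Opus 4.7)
The plan is to argue by contradiction. Suppose $y, y'$ are two neighbours of $x$ in $\inc(P)$ both satisfying the hypothesis, and suppose without loss of generality that $y < y'$ in $P$. The crucial preliminary observation is that the hypothesis on $y$ produces a witness $z \in V$ with $x < z$ and $y \parallel z$: namely, take the third vertex $z$ of an induced path $x, y, z, \ldots$ of length at least two whose vertices all lie in $\iinc_P(x)\cup \uparrow x$. Then $\{y,z\}$ is a path-edge so $y \parallel z$; $\{x,z\}$ is a non-edge of the induced path so $x$ and $z$ are comparable in $P$; and combined with $z \in \iinc_P(x)\cup \uparrow x$ and $z \neq x$, this forces $z \in (\uparrow x)\setminus\{x\}$, i.e.\ $x < z$.

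With such a $z$ in hand, I would compare it to $y'$ and show that each of the three possible order-relations leads to a contradiction. First, $y' < z$ together with $y < y'$ gives by transitivity $y < z$, contradicting $y \parallel z$. Second, $z < y'$ together with $x < z$ gives $x < y'$, contradicting the fact that $y'$ is a neighbour of $x$ in $\inc(P)$, i.e.\ $x \parallel y'$. Third and most interestingly, $y' \parallel z$ exhibits two chains $y < y'$ and $x < z$ on four pairwise distinct vertices whose four crossing pairs $(y,x)$, $(y,z)$, $(y',x)$, $(y',z)$ are all incomparable; this is an embedding of $2\oplus 2$ into $P$, contradicting the assumption that $P$ is an interval order.

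Since every case yields a contradiction, $y$ and $y'$ must be incomparable, which is precisely the claim. I do not foresee a serious obstacle: the argument uses only the very first edge of the induced path beyond $y$ (the longer tail plays no role), and the interval-order hypothesis is invoked exactly once through the forbidden pattern $2\oplus 2$. The mild subtlety worth writing out carefully is only the verification that the four vertices $x,y,y',z$ are indeed distinct, which follows immediately from the incomparabilities $y\parallel x$, $y\parallel z$, $y'\parallel x$, $y'\parallel z$ (the last being the case assumption) together with $y<y'$ and $x<z$.
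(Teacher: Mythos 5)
Your proof is correct and follows essentially the same route as the paper's: both use the third vertex $z$ of the path through the smaller neighbour (forced to satisfy $x<z$ and $y\parallel z$) and then rule out $y'\parallel z$ via the forbidden $2\oplus 2$ on the chains $x<z$ and $y<y'$, the comparable cases falling to transitivity. The only difference is cosmetic: the paper first deduces that $y'$ and $z$ must be comparable and then splits into two cases, while you run the three cases directly.
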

\begin{proof} Let $x:=x_0, x_1,\dots, x_{n}$  and  $x:=x'_0, x'_1,\dots, x'_{n'}$ be two induced paths in $\inc( P)$ with $n,n'\geq 2$ and whose vertices are in $\iinc_P( x)\cup \uparrow x$. Note that necessarily $x<x_2$ and $x<x'_2$. Suppose for a contradiction that $x_1$ and $x'_1$ are comparable. Suppose $x_1<x'_1$. Since $x<x_2$ and $x_1$ is incomparable to $x$ and to $x_2$ and $x$ is incomparable to $x'_1$ and $P$ is an interval order we infer that $x'_1$ is comparable to $x_2$ and hence $x<x'_1$ or $x_1<x_2$, which is impossible . The case $x'_1<x_1$ can be dealt with similarly by considering the comparabilities $x'_1<x_1$ and $x<x'_2$.
\end{proof}

\begin{proof}(Of Theorem \ref{thm:intervalorder-isometric})
Let $x_0\in P$ and set $I_0:=\iinc_P( x_0)\cup \downarrow x_0$ and $F_0:=\iinc_P( x_0)\cup \uparrow x_0$. Clearly, $V(G)=I_0\cup F_0$. Furthermore, since the diameter of $G$ is infinite and $G_{\restriction I_0}$ and $G_{\restriction F_0}$ are connected graphs we infer that the diameter in $G_{\restriction I_0}$ or in $G_{\restriction F_0}$  is infinite. We may assume without loss of generality that the diameter of $G_0:=G_{\restriction F_0}$ is infinite. Hence, the lengths of isometric paths in $G_0$ starting at $x_0$ are unbounded.\\
\textbf{Claim 1:} There exists $x_1\in \iinc_P( x_0)$ such that the lengths of isometric paths in $G_0$ starting at $x_0$ and going through $x_1$ are unbounded.\\
Since the antichains of $P$ are finite, there are only finitely many neighbours of $x_0$ in $G_0$ laying on isometric paths starting at $x_0$ and of length at least two. Hence there must be a neighbour $x_1$ of $x$ in $G_0$ such that the lengths of isometric paths in $G_{\restriction {F_0}}$ starting at $x_0$ and going through $x_1$ are unbounded.\\

Now suppose constructed an isometric path $x_0,...,x_n$ such that  $x_i< x_j$ for all $j-i\geq 2$ and that the lengths of isometric paths starting at $x_0$ and going through $x_0,...,x_n$ are unbounded. From Lemma \ref{lemma:neigbour-antichain} we deduce that there are only finitely many neighbours of $x_n$ that lay on such isometric paths. Applying Claim 1 to $x_n$ we deduce that there exists $x_{n+1}>x_{n-1}$ such that $x_0,...,x_n,x_{n+1}$ is an isometric path of length $n+1$.
\end{proof}

We now proceed to the proof of Example \ref{thm:intervalorder-non-isometric}.

\begin{proof}
We totally order the set $\NN\times \NN$ as follows: $(n,m)\leq (n', m')$ if $m<m'$ or ($m=m'$ and $n\leq n'$). Consider the set $Q$ of intervals $X_{n, m}:=[(n, m), (n, m+1)[$ ordered as in (\ref{ordre-sur-intervalles}) above and set $G:=\inc(Q)$. Then $X_{n, m} \leq X_{n', m'}$ if and only if $m+1<m'$ or ($m+1=m'$ and $n\leq n'$). Equivalently, $\{X_{n, m},X_{n', m'}\}$ is an edge of $G$ if and only if $m=m'$ or ($m'=m+1$ and $n'<n$) or ($m=m'+1$ and $n<n'$). \\
\textbf{Claim 1:} $G$ is connected and has infinite diameter.\\
Let $X_{n, m}$ and $X_{n', m'}$ be two elements of $Q$ so that $n\leq n'$. We may suppose without loss of generality that $X_{n, m}\cap X_{n', m'}=\varnothing$. We may suppose without loss of generality that $m< m'$.  Consider the sequence of intervals $X_{n,m},X_{n',m},X_{n+1,m+1}, X_{n,m+2},X_{n',m+2},...,X_{n',m'}$. This is easily seen to be a path in $G$ proving that $G$ is connected.\\ %To see that the diameter of $G$ is unbounded consider the induced path $R:=X_{n,n},X_{n-1,n+1},X_{n-2,n+2},...,X_{0,2n}$. We claim that $R$ is isometric. \\
\textbf{Claim 2:} $G$ has no isometric infinite path starting at $X_{0,0}$.\\
Let $X_{0,0}=:Y_0, \dots Y_r\dots$ be an isometric path. Then $Y_1=X_{n_1,0}$ for some $n_1\in \NN$. Now $Y_2$ must intersect $Y_1$ but not $Y_0$. Hence, $Y_2=X_{n_2,1}$ for some $n_2<n_1$. Now $Y_3$ must intersect $Y_2$ but not $Y_1$. Suppose $Y_3=X_{n',1}$. Then $n_1<n'$. But then $X_{n'+1,0}$ intersects $Y_3$ and $Y_0$ and therefore the distance in $G$ between $Y_0$ and $Y_3$ is two contradicting our assumption that $X_{0,0}=:Y_0, \dots Y_n\dots$ is isometric. Hence, we must have $Y_3=X_{n_3,2}$ for some $n_3<n_2$. An induction argument shows that $Y_r=X_{n_r,r-1}$ with $n_r<n_{r-1}<...<n_1$. Since there are no infinite strictly decreasing sequences of positive integers the isometric path $X_{0,0}=:Y_0, \dots Y_r\dots$ must be finite. This completes the proof of Claim 2.\\
It follows from Theorem \ref{thm:polat} that $G$ has no isometric path.
\end{proof}

%\nocite{*}
\section*{Acknowledgement} We are indebted to an anonymous referee for their careful examination of the paper and for their comments which improved its presentation.

\end{document}